\def\R{\mathbb{R}}
\def\N{\mathbb{N}}
\def\epsilon{\varepsilon}
\newcommand{\be}{\begin{equation}}
\newcommand{\ee}{\end{equation}}
\newcommand{\baa}{\begin{array}}
\newcommand{\eaa}{\end{array}}
\newcommand{\ba}{\begin{eqnarray}}
\newcommand{\ea}{\end{eqnarray}}
\newtheorem{theorem}{Theorem}[section]
\newtheorem{lemma}[theorem]{Lemma}
\newtheorem{corollary}[theorem]{Corollary}
\newtheorem{definition}[theorem]{Definition}
\newtheorem{assumption}[theorem]{Assumption}
\newtheorem{remark}[theorem]{Remark}
\numberwithin{equation}{section}
\newenvironment{proof}[1][Proof]{\noindent\textbf{#1.} }{\hfill $\Box$}
\begin{document}
\date{}
\title{\bf{Transition fronts in unbounded domains with multiple branches}}
\author{Hongjun Guo\thanks{Email address: hongjun.guo@etu.univ-amu.fr} \\
\\
\footnotesize{ Department of Mathematics and Statistics, University of Wyoming, Laramie, WY, USA}}
\maketitle

\begin{abstract}\noindent{This paper is concerned with the existence and uniqueness of transition fronts of a general reaction-diffusion-advection equation in domains with multiple branches. In this paper, every branch in the domain is not necessary to be straight and we use the notions of almost-planar fronts to generalize the standard planar fronts. Under some assumptions of existence and uniqueness of almost-planar fronts with positive propagating speeds in extended branches, we prove the existence of entire solutions emanating from some almost-planar fronts in some branches. Then, we get that these entire solutions converge to almost-planar fronts in some of the rest branches as time increases if no blocking occurs in these branches. Finally, provided by the complete propagation of every front-like solution emanating from one almost-planar front in every branch, we prove that there is only one type of transition fronts, that is, the entire solutions emanating from some almost-planar fronts in some branches and converging to almost-planar fronts in the rest branches.}
\noindent{}
\vskip 0.1cm
\noindent\textit{Keywords.} Reaction-diffusion-advection equations; Transition fronts; Almost-planar fronts; Domains with multiple branches.
\end{abstract}


\section{Introduction}
In this paper, we consider the following reaction-diffusion-advection equation in unbounded domains
\begin{eqnarray}\label{eq1.1}
\left\{\begin{array}{lll}
&u_t-$div$(A(x)\nabla u)+q(x)\cdot \nabla u=f(x,u), \quad &t\in\R,\ x\in\Omega,\\
&\nu A(x)\nabla u=0,& t\in\R,\ x\in\partial\Omega,
\end{array}
\right.
\end{eqnarray}
where $\Omega$ is a smooth non-empty open connected subset of $\R^N$ with $N\ge 2$ and $\nu(x)$ denotes the outward unit normal to $\partial\Omega$. More precise assumptions on $\Omega$ will be given later. Such equations arise in various models in combustion, population dynamics and ecology (see \cite{Fisher,KPP,Murray,SK,Xin}), where $u$ typically stands for the temperature or the concentration of a species.

Throughout the paper, $A(x)=(A_{ij}(x))_{1\le i,j\le N}$ denotes a globally $C^{1,\alpha}$ (with $\alpha>0$) matrix field defined in $\overline{\Omega}$ and there exist $0<\beta_1\le \beta_2$ such that
\be\label{eq-A}
\beta_1 |\xi|^2\le \sum_{1\le i,j\le N} A_{i,j}(x)\xi_i\xi_j\le \beta_2 |\xi|^2 \hbox{ for all } x\in \overline{\Omega} \hbox{ and } \xi\in\R^N.
\ee
The vector field $q(x)=(q_i(x))_{1\le i\le N}$ is bounded and of class $C^{0,\alpha}(\overline{\Omega})$. The term $q(x)\cdot\nabla u$ is understood as a transport term, or a driving flow. In some sense, the flow is driven by some exogeneously given flow represented by $q(x)$. The reaction term $f(x,u): \R^N\times[0,1]\rightarrow \R$ is assumed to be of class $C^{0,\alpha}$ in $x\in\R^N$ uniformly in $u\in [0,1]$, and of $C^{1,1}$ in $u$ uniformly in $x\in \R^N$. Assume that $f(x,u)$ is Lipschitz-continuous in $u$ uniformly for $x\in\R^N$. One also assumes that $0$ and $1$ are uniformly (in $x$) stable zeroes of $f(x,\cdot)$ in the sense that there exist $\gamma>0$ and $\sigma\in (0,1/2)$ such that $f(x,u)$ is decreasing in $u$ for $(x,u)\in\R^N\times [0,\sigma]$ and $(x,u)\in\R^N\times [1-\sigma,1]$ and
\begin{eqnarray}\label{eq-F}
\left\{\begin{array}{lll}
&f(x,u)\le -\gamma u, \quad & \hbox{ for all } (x,u)\in\R^N\times [0,\sigma],\\
&f(x,u)\ge \gamma (1-u),& \hbox{ for all } (x,u)\in \R^N\times [1-\sigma,1].
\end{array}
\right.
\end{eqnarray}
A typical example is the homogeneous bistable reaction $f$, that is, there is $\theta\in (0,1)$ such that
\be\label{F1}
f(u)<0 \hbox{ for $u\in (0,\theta)$ and } f(u)>0 \hbox{ for $u\in (\theta,1)$}.
\ee

The underlying domain $\Omega$ is assumed to be a domain with multiple branches. We refer to~\cite{GHS} for the definition of a domain with multiple cylindrical branches, in which every branch is straight. In this paper, we drop the word "cylindrical" such that our domains can contain curved branches.  For any unit vector $e\in \mathbb{S}^{N-1}$, let $P_e$ be the hyperplane of $\R^N$ orthogonal to $e$. Let $\omega(s): \R\rightarrow P_e$ be a family of subsets of $P_e$ which is continuous with respect to $s$. Assume that $\omega(s)$ is a smooth bounded nonempty connected subset of $P_e$ for every $s$ and $0\in \omega(s)$ for some $s$. One also assumes that $|\omega(s)|$ is uniformly bounded for $s\in\R$ in the sense that $\sup_{y\in \omega(s)} |y|<+\infty$ uniformly for $s\in \R$. A branch in a direction $e\in\mathbb{S}^{N-1}$ with $\omega(s)$ and shift $x_0$ is the open unbounded domain of $\R^N$ defined by
\be\label{defbranch}
\mathcal{H}_{e,\omega,x_0}=\big\{x\in\R^N: x-(x\cdot e)e\in\omega(x\cdot e),\ x\cdot e>0\big\}+x_0.
\ee
Notice that the width of $\mathcal{H}_{e,\omega,x_0}$ is bounded since $|\omega(s)|$ is uniformly bounded for $s\in \R$.
A smooth unbounded domain of $\R^N$ is called a domain with multiple branches if there exist a real number $L>0$, an integer $m\ge2$, and $m$ branches $\mathcal{H}_i:=\mathcal{H}_{e_i,\omega_i,x_i}$ (with $i=1,\cdots,m$), such that
\be\label{branches}\left\{\baa{l}
\displaystyle\mathcal{H}_i\cap\mathcal{H}_j=\emptyset\hbox{ for every }i\neq j\in\big\{1,\cdots,m\big\}\ \hbox{ and }\ \Omega\setminus B(0,L)=\mathop{\bigcup}_{i=1}^{m}\mathcal{H}_i\setminus B(0,L),\vspace{3pt}\\
\mathcal{H}_i\setminus B(0,L)\hbox{ is connected for every }i\in\{1,\cdots,m\}.\eaa\right.
\ee
One can refer to Figure 1 as an example of  a  domain with $5$ branches. Remember that in this paper, the branches are not necessary to be straight.

\begin{figure}[ht]\centering
\includegraphics[scale=0.5]{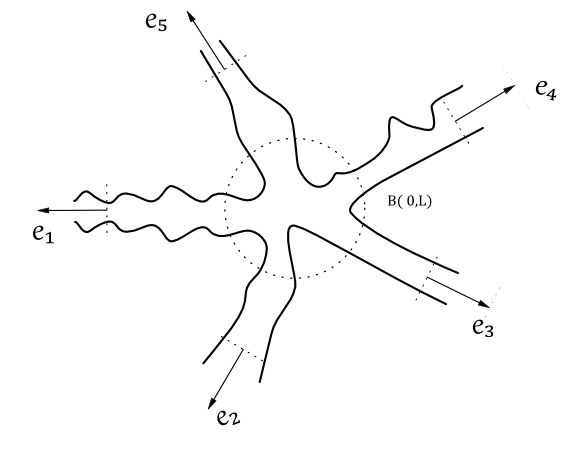}
\caption{An example of a domain with 5 branches.}
\end{figure}

We go back to simple cases for recalling some known results with different geometrical conditions of $\Omega$. Let us replace the divergence-type operator $div(A\nabla u)$ by the Laplace operator together with Neumann boundary condition $\partial_{\nu} u:=\frac{\partial u}{\partial \nu}=0$ and set $q(x)\equiv 0$. Assume $f(x,u)=f(u)$ satisfying \eqref{F1}. That is,
\begin{eqnarray}\label{eq1.1-q0}
\left\{\begin{array}{lll}
&u_t-\Delta u=f(u), \quad &t\in\R,\ x\in\Omega,\\
&u_{\nu}=0,& t\in\R,\ x\in\partial\Omega.
\end{array}
\right.
\end{eqnarray}
A simplest example of a domain with multiple branches is a straight infinite cylinder, that is, up to rotation,
\be\label{straight}
\Omega=\big\{(x_1,x'): x_1\in\R,\ x'\in\omega\big\},
\ee
where $\omega\subset\R^{N-1}$ is a smooth bounded non-empty open connected subset of $\R^{N-1}$.
From the pioneering paper \cite{FM}, it is well known that \eqref{eq1.1-q0} admits a planar front $\phi_f(x_1-c_f t)$ satisfying
\be\label{phi_f}
c_f\phi_f'+\phi_f''+f(\phi_f)=0,\ \phi_f(-\infty)=1 \hbox{ and } \phi_f(+\infty)=0.
\ee
The function $\phi_f$ is called the profile and the constant $c_f$ is called the propagation speed. It is also well-known that $\phi_f$ and $c_f$ are uniquely determined by $f$, $\phi_f$ is decreasing and $c_f$ is of the sign of $\int_0^1 f(s) ds$.

Another particular example is a curved cylinder, that is, up to rotation,
\be\label{bilateral}
\Omega=\big\{(x_1,x'): x_1\in\R,\ x'\in\omega(x_1)\big\},
\ee
where $(\omega(x_1))_{x_1\in\R}$ is a family of smooth bounded non-empty open connected subset of~$\R^{N-1}$. One can easily notice that the planar front~$\phi_f(x_1-c_f t)$ is not a solution of~\eqref{eq1.1-q0} in general if $\omega(x_1)$ is not independent of $x_1$. However, there still exist some front-like solutions. For instance, when $\Omega$ is a bilaterally straight cylinder, that is, $\omega(x_1)$ is independent of $x_1$ for $x_1\le-L$ and for $x_1\ge L$, with some $L>0$, there exist entire solutions emanating from the planar front $\phi_f(x_1-c_f t)$ coming from the ``left" part of the domain, see~\cite{BBC,CG,P}. More precisely, there exists a unique solution $u:\R\times\overline{\Omega}\to(0,1)$ of~\eqref{eq1.1-q0} such that
\be\label{eq1.9}
u(t,x)-\phi_f(x_1-c_f t)\rightarrow 0\,\text{ as } t\rightarrow -\infty \text{ uniformly in $\overline{\Omega}$}.
\ee
One can also refer to \cite{P} for the existence of front-like solutions in asymptotically straight cylinders. If the curved cylinder $\Omega$ is with periodic boundaries, that is, $\omega(x_1)$ is periodic with respect to $x_1\in\R$, there may also exist pulsating traveling fronts (as defined in the next paragraph), see \cite{MNL} for some conditions of existence. For a general domain with multiple cylindrical branches (every branch is straight), one knows from \cite{GHS} that there exist entire solutions emanating from planar fronts in some branches. In fact, let $I$ and $J$ be two non-empty sets of $\{1,\cdots,m\}$ ($m$ is the number of branches) such that $I\cap J=\emptyset$ and $I\cup J=\{1,\cdots,m\}$. There exists a time-increasing solution $u(t,x)$ of \eqref{eq1.1-q0} such that
\begin{equation*}
\left\{\baa{rcll}
u(t,x)\!-\!\phi_f(-x\cdot e_i\!-\!c_f t\!+\!\sigma_i) & \!\!\!\to\!\!\! & 0\!\! & \text{uniformly in $\overline{\mathcal{H}_i}\cap\overline{\Omega}\,$ for every $i\in I$},\vspace{3pt}\\
u(t,x) & \!\!\to\!\! & 0\!\! & \displaystyle\text{uniformly in $\overline{\Omega\setminus \mathop{\bigcup}_{i\in I}\mathcal{H}_i}$},\eaa\right.\text{as $t\rightarrow -\infty$}
\end{equation*}
for some real numbers $(\sigma_i)_{i\in I}$.

We now recall some results regarding to the periodic heterogeneity of coefficients. Suppose that the coefficients $A(x)$, $q(x)$, the nonlinear function $f(x,u)$ and the domain $\Omega$ are periodic in the direction $e$. For convenience in presentation, assume that they are periodic in the sense that $A(x+k e)=A(x)$, $q(x+k e)=q(x)$, $f(x+k e,u)=f(x,u)$ and $\Omega+k e=\Omega$ for any $k\in\mathbb{Z}^N$. In this case, one can define pulsating fronts $u(t,x)$ for \eqref{eq1.1}, see \cite{BH0}. A pulsating front $u(t,x)$ facing direction $e$ is a classical solution of \eqref{eq1.1} such that for some $c\neq 0$, there holds
$$u(t-\frac{k}{c},x)=u(t,x+k e) \hbox{ for all $t\in\R$, $x\in\overline{\Omega}$},$$
and such that, for all $t\in\R$,
$$u(t,x)\rightarrow 1 \hbox{ as } x\cdot e\rightarrow -\infty,\ u(t,x)\rightarrow 0 \hbox{ as } x\cdot e\rightarrow +\infty.$$
Similarly, one can define a pulsating front facing direction $-e$. We refer to \cite{DHZ1,Du2, FZ,NR4,X1,Xin0,X3} for some existence results of pulsating fronts in the whole space $\Omega=\R^N$. We also refer to \cite{DHZ,Xin1,XZ,Z3} for nonexistence results. The heterogeneity of coefficients not only effects the profiles of pulsating fronts but also the propagation speeds. Generally speaking, the front facing direction $e$ is not the same as the front facing direction $-e$.

In this paper, we aim to deduce some existence and uniqueness results of solutions of \eqref{eq1.1} under rather general assumptions. For this purpose, we recall the notion of transition fronts which generalizes the standard notion of traveling fronts. Such notion covers the planar fronts, pulsating fronts and also many types of fronts in the whole space, such as conical shaped fronts, pyramidal fronts and so on, see~\cite{HM,HMR1,HMR2,HS,NT1,T1,T2,T3}. Let us first introduce a few notations. The unbounded open connected set $\Omega\subset \R^N$ is assumed to have a globally $C^{2,\beta}$ boundary with $\beta>0$ (this is what we call a smooth domain throughout the paper), that is, there exist $\rho>0$ and $C>0$ such that, for every $y\in\partial \Omega$, there are a rotation $R_y$ of $\R^N$ and a $C^{2,\beta}$ map $\psi_y:\bar{B}=\big\{x'\in\R^{N-1}: |x'|\le2\rho\big\}\rightarrow \R$ such that $\psi_y(0)=0$, $\|\psi_y\|_{C^{2,\beta}(\bar{B})}\le C$ and
$$\Omega\cap B(y,\rho)=\left[y+R_y\big(\{x=(x',x_N)\in\R^N: x'\in\bar{B}, x_N>\psi_y(x')\}\big)\right]\cap B(y,\rho),$$
where
$$B(y,\rho)=\big\{x\in\R^N: |x-y|<\rho\big\}$$
and $|\ \ |$ denotes the Euclidean norm. Let $d_{\Omega}$ be the geodesic distance in $\overline{\Omega}$. For any two subsets~$A$ and~$B$ of~$\overline{\Omega}$, we set
\begin{equation*}
d_{\Omega}(A,B)=\inf\big\{d_{\Omega}(x,y): (x,y)\in A\times B\big\},
\end{equation*}
and $d_{\Omega}(x,A)=d_{\Omega}(\{x\},A)$ for $x\in\R^N$.
Consider now two families $(\Omega_t^-)_{t\in \mathbb{R}}$ and $(\Omega_t^+)_{t\in \mathbb{R}}$ of open non-empty subsets of $\Omega$ such that
\begin{eqnarray}\label{eq1.3}
\forall t\in \mathbb{R},\ \ \left\{\begin{array}{l}
\Omega_t^-\cap \Omega_t^+=\emptyset,\vspace{3pt}\\
\partial \Omega_t^-\cap \Omega=\partial \Omega_t^+\cap \Omega=:\Gamma_t,\vspace{3pt}\\
\Omega_t^-\cup \Gamma_t \cup \Omega_t^+=\Omega,\vspace{3pt}\\
\sup\big\{d_{\Omega}(x,\Gamma_t): x\in \Omega_t^+\big\}=\sup\big\{d_{\Omega}(x,\Gamma_t): x\in \Omega_t^-\big\}=+\infty\end{array}\right.
\end{eqnarray}
and
\begin{eqnarray}\label{eq1.4}
\left\{\begin{array}{l}
\inf\Big\{\sup\big\{d_{\Omega}(y,\Gamma_t): y\in \Omega_t^+, d_{\Omega}(y,x)\leq r\big\}: t\in \mathbb{R},\ x\in \Gamma_t\Big\}\rightarrow +\infty\vspace{3pt}\\
\inf\Big\{\sup\big\{d_{\Omega}(y,\Gamma_t): y\in \Omega_t^-, d_{\Omega}(y,x)\leq r\big\}: t\in \mathbb{R},\ x\in \Gamma_t\Big\}\rightarrow +\infty\end{array}\right.
\text{ as}\ \ r\rightarrow +\infty.
\end{eqnarray}
Notice that the condition~\eqref{eq1.3} implies in particular that the interface $\Gamma_t$ is not empty for every~$t\in \mathbb{R}$. As far as~\eqref{eq1.4} is concerned, it says that for any $M>0$, there is $r_M>0$ such that, for every $t\in \mathbb{R}$ and~$x\in \Gamma_t$, there are $y^{\pm}\in \mathbb{R}^N$ such that
\begin{eqnarray}\label{eq1.5}
y^{\pm}\in \Omega^{\pm}_t,\ \ d_{\Omega}(x,y^{\pm})\leq r_M\ \ \text{and}\ \ d_{\Omega}(y^{\pm},\Gamma_t)\geq M.
\end{eqnarray}
Moreover, in order to avoid interfaces with infinitely many twists, the sets $\Gamma_t$ are assumed to be included in finitely many graphs: there is an integer $n\geq 1$ such that, for each $t\in \mathbb{R}$, there are $n$ open subsets $\omega_{i,t}\subset \mathbb{R}^{N-1}$(for $1\leq i\leq n$), $n$ continuous maps $\psi_{i,t}: \omega_{i,t}\rightarrow \mathbb{R}$ and $n$ rotations $R_{i,t}$ of $\mathbb{R}^N$, with
\begin{equation}\label{eq1.6}
\Gamma_t \subset \bigcup_{1\leq i\leq n} R_{i,t}\left(\big\{x=(x',x_N)\in \mathbb{R}^N: x'\in \omega_{i,t},\ x_N=\psi_{i,t}(x')\big\}\right).
\end{equation}

\begin{definition}\label{TF}{\rm{\cite{BH1,BH2}}}
For problem~\eqref{eq1.1}, a transition front connecting $0$ and $1$ is a classical solution $u:\mathbb{R}\times\overline{\Omega} \rightarrow (0,1)$ for which there exist some sets $(\Omega_t^{\pm})_{t\in \mathbb{R}}$ and $(\Gamma_t)_{t\in \mathbb{R}}$ satisfying~\eqref{eq1.3}-\eqref{eq1.6}, and, for every $\varepsilon>0$, there exists $M_{\varepsilon}>0$ such that
\begin{eqnarray}\label{eq1.7}
\left\{\baa{l}
\forall\,t\in \mathbb{R},\ \ \forall\,x\in \overline{\Omega_t^+}, \ \ \left(d_{\Omega}(x,\Gamma_t)\geq M_{\varepsilon}\right)\Rightarrow \left(u(t,x)\geq 1-\varepsilon\right)\!,\vspace{3pt}\\
\forall\,t\in \mathbb{R},\ \ \forall\,x\in \overline{\Omega_t^-}, \ \ \left(d_{\Omega}(x,\Gamma_t)\geq M_{\varepsilon}\right)\Rightarrow \left(u(t,x)\leq \varepsilon\right)\!.\eaa\right.
\end{eqnarray}
Furthermore, $u$ is said to have a global mean speed $\gamma$ $(\geq 0)$ if
\be\label{defgms}
\frac{d_{\Omega}(\Gamma_t,\Gamma_s)}{|t-s|}\rightarrow \gamma \ \ \text{as}\ \ |t-s|\rightarrow +\infty.
\ee
\end{definition}

As far as the domain with multiple branches is concerned, one can smoothly extend every branch $\mathcal{H}_i$ for $-e_i$ part. Denote the extension of $\mathcal{H}_i$ by $\widetilde{\mathcal{H}}_i$. For instance, one can extend the $-e_i$ part of $\mathcal{H}_i$ to be
\be\label{wH}
\widetilde{\mathcal{H}}_i:=\{x\in\R^N; x-(x\cdot e_i)e_i \in \widetilde{\omega}_i(x\cdot e_i)\}+x_i
\ee
such that $|\widetilde{\omega}_i(s)|$ is uniformly bounded for $s\in\R$ and $\widetilde{\omega}_i(s)=\omega_i(s)$ for $s>0$. Moreover, one can redefine $A(x)$, $q(x)$, $f(x,u)$ to be $\widetilde{A}(x)$, $\widetilde{q}(x)$, $\widetilde{f}(x,u)$ in $\widetilde{\mathcal{H}}_i$ such that
\begin{eqnarray}\label{wA}
\left\{\begin{array}{lll}
&&\widetilde{A}(x)\in C^{1,\alpha}\hbox{ satisfies \eqref{eq-A}},\ \widetilde{q}(x)\in C^{0,\alpha} \hbox{ is bounded},\\
&&\widetilde{f}(x,u)\hbox{ is  $C^{0,\alpha}$ in $x$ and $C^{1,\alpha}$ in $u$ satisfying $\widetilde{f}(x,0)=\widetilde{f}(x,1)=0$ and \eqref{eq-F}},\\
&&\widetilde{A}(x)=A(x),\ \widetilde{q}(x)=q(x),\ \widetilde{f}(x,u)=f(x,u) \hbox{ for $x\in \mathcal{H}_i$},
\end{array}
\right.
\end{eqnarray}
 Notice that there are various ways of extension. As mentioned in front, for the following equation
\begin{eqnarray}\label{eq-extension}
\left\{\begin{array}{lll}
&u_t-$div$(\widetilde{A}(x)\nabla u)+\widetilde{q}(x)\cdot \nabla u=\widetilde{f}(x,u), \quad &t\in\R,\ x\in\widetilde{\mathcal{H}}_i,\\
&\nu \widetilde{A}(x)\nabla u=0,& t\in\R,\ x\in\partial\widetilde{\mathcal{H}}_i,
\end{array}
\right.
\end{eqnarray}
there are many possibilities of existence of front-like solutions including planar fronts and pulsating fronts. These fronts can actually be classified into the almost-planar front of the transition front, defined as following.

\begin{definition}{\rm{\cite{BH1,BH2}}}
For a fixed $i\in\{1,\cdots,m\}$, assume that $\widetilde{\mathcal{H}}_i$ is a smooth extension of $\mathcal{H}_i$ and $\widetilde{A}(x)$, $\widetilde{q}(x)$, $\widetilde{f}(x,u)$ are redefined in $\widetilde{\mathcal{H}}_i$ such that \eqref{wA} holds. A transition front $u(t,x)$ connecting $0$ and $1$ of \eqref{eq-extension} in the sense of Definition~\ref{TF} is called almost-planar if, for every $t\in\R$, the set $\Gamma_t$ can be chosen as
$$\Gamma_t=\{x\in\widetilde{\mathcal{H}}_i; x\cdot e_i=\xi_t\},$$
for some real number $\xi_t$. If $\Omega_t^+$ and $\Omega_t^-$ are defined by $\{x\in\widetilde{\mathcal{H}}_i; x\cdot e_i\lessgtr \xi_t\}$ respectively, we call that $u(t,x)$ is facing direction $e_i$. If $\Omega^+_t$ and $\Omega_t^-$ are defined by $\{x\in\widetilde{\mathcal{H}}_i; x\cdot e_i\gtrless \xi_t\}$ respectively, we call that $u(t,x)$ is facing direction $-e_i$.
\end{definition}

For the existence of entire solutions emanating from almost-planar fronts in some branches and their large time behavior, we make rather general assumptions, that is, assume the existence and uniqueness of almost-planar fronts in every extended branch.

\begin{assumption}\label{assumption-r}
For every $i\in\{1,\cdots,m\}$, assume that $\widetilde{\mathcal{H}}_i$ is a smooth extension of $\mathcal{H}_i$ and $\widetilde{A}(x)$, $\widetilde{q}(x)$, $\widetilde{f}(x,u)$ are redefined in $\widetilde{\mathcal{H}}_i$ such that \eqref{wA} holds. Assume that there is a unique (up to time shifts) almost-planar front $u^i_r(t,x)$ connecting $0$ and $1$ of \eqref{eq-extension} facing to direction $e_i$ with sets $\Omega_t^{\pm}$ and $\Gamma_t$ satisfying
$$\Omega_t^{+}=\{x\in\widetilde{\mathcal{H}}_i; x\cdot e_i<c^i_r t\},\ \Omega_t^{-}=\{x\in\widetilde{\mathcal{H}}_i; x\cdot e_i>c^i_r t\} \hbox{ and } \Gamma_t=\{x\in\widetilde{\mathcal{H}}_i; x\cdot e_i=c^i_r t\},$$
where $c^i_r$ is the propagation speed.
\end{assumption}

\begin{assumption}\label{assumption-l}
For every $i\in\{1,\cdots,m\}$, assume that $\widetilde{\mathcal{H}}_i$ is a smooth extension of $\mathcal{H}_i$ and $\widetilde{A}(x)$, $\widetilde{q}(x)$, $\widetilde{f}(x,u)$ are redefined in $\widetilde{\mathcal{H}}_i$ such that \eqref{wA} holds. Assume that there is a unique (up to time shifts) almost-planar front $u^i_l(t,x)$ connecting $0$ and $1$ of \eqref{eq-extension} facing to direction $-e_i$ with sets $\Omega_t^{\pm}$ and $\Gamma_t$ satisfying
$$\Omega_t^{+}=\{x\in\widetilde{\mathcal{H}}_i; x\cdot e_i>-c^i_l t\},\ \Omega_t^{-}=\{x\in\widetilde{\mathcal{H}}_i; x\cdot e_i<-c^i_l t\} \hbox{ and } \Gamma_t=\{x\in\widetilde{\mathcal{H}}_i; x\cdot e_i=-c^i_l t\},$$
where $c^i_l$ is the propagation speed.
\end{assumption}

\begin{remark}
The uniqueness in Assumptions~\ref{assumption-r} and~\ref{assumption-l} means that if there is a transition front $v(t,x)$ with the same sets $\Gamma_t$ and $\Omega^{\pm}_t$ defined in Assumption~\ref{assumption-r} or~\ref{assumption-l}, then there is $\tau\in \R$ such that $v(t,x)=u^i_r(t+\tau,x)$ or $v(t,x)=u^i_l(t+\tau,x)$ respectively.
\end{remark}

Moreover, we need a technical assumption, that is,

\begin{assumption}\label{assumption-limiting}
For any fixed $i\in\{1,\cdots,m\}$, take a sequence $\{t_n\}_{n\in\mathbb{N}}\subset \R$ such that $t_n\rightarrow +\infty$ as $n\rightarrow +\infty$. Let $\mathcal{H}_i^n=\mathcal{H}_i-t_n e_i$, $A_n(x)=A(x+t_n e_i)$, $q_n(x)=q_n(x+t_n e_i)$ and $f_n(x,\cdot)=f(x+t_n e_i,\cdot)$ for $x\in \mathcal{H}_i^n$. Assume that for any such sequence $\{t_n\}_{n\in \mathbb{N}}$, there is an infinite cylinder $\mathcal{H}_i^{\infty}$ parallel to $e_i$ such that $\mathcal{H}_i^n$ converge locally uniformly to $\mathcal{H}_i^{\infty}$ and there are $A_{\infty}(x)$, $q_{\infty}(x)$, $f_{\infty}(x,\cdot)$ satisfying \eqref{wA} such that $A_n\rightarrow A_{\infty}$, $q_n(x)\rightarrow q_{\infty}(x)$, $f_n\rightarrow f_{\infty}$ locally uniformly in $\mathcal{H}_i^{\infty}$ as $n\rightarrow +\infty$. Assume that there exist a unique (up to time shifts) almost planar front $v_r(t,x)$ facing direction $e_i$ and a unique almost planar front $v_l(t,x)$ facing direction $-e_i$ for the limiting equation
\begin{eqnarray*}
\left\{\begin{array}{lll}
&v_t-$div$(A_{\infty}(x)\nabla v)+q_{\infty}(x)\cdot \nabla v=f_{\infty}(x,v), \quad &t\in\R,\ x\in\mathcal{H}_i^{\infty},\\
&\nu A_{\infty}(x)\nabla v=0,& t\in\R,\ x\in\partial\mathcal{H}_i^{\infty}.
\end{array}
\right.
\end{eqnarray*}
\end{assumption}

This assumption actually holds for many cases such as (i) $\mathcal{H}_i^{\infty}$ is a straight cylinder and $A_{\infty}$, $q_{\infty}$, $f_{\infty}(\cdot,s)$ (for every $s\in\R$) are constant, (ii) $\mathcal{H}_i^{\infty}$ is a straight cylinder and $A_{\infty}$, $q_{\infty}$, $f_{\infty}$ are independent of $x\cdot e_i$, (iii)  $\mathcal{H}_i^{\infty}$, $A_{\infty}$, $q_{\infty}$, $f_{\infty}$ are periodic in $x\cdot e_i$, see \cite[Theorem~1.14]{BH2}. One can see Section~5 for some examples which satisfy all Assumptions~\ref{assumption-r}, \ref{assumption-l} and~\ref{assumption-limiting}.

Now, we claim the existence of entire solutions emanating from almost-planar fronts in some branches.

\begin{theorem}\label{Th1}
Let Assumptions~\ref{assumption-l} and~\ref{assumption-limiting} hold. Let $I$ and $J$ be two non-empty sets of $\{1,\cdots,m\}$ such that $I\cap J=\emptyset$ and $I\cup J=\{1,\cdots,m\}$. If $c^i_l>0$ for all $i\in I$, then there exists a time-increasing entire solution $u(t,x)$ of~\eqref{eq1.1} such that
\be\label{frontlike}
\left\{\baa{rcll}
u(t,x)\!-\!u^i_l(t+\sigma_i,x) & \!\!\!\to\!\!\! & 0\!\! & \text{uniformly in $\overline{\mathcal{H}_i}\cap\overline{\Omega}\,$ for every $i\in I$},\vspace{3pt}\\
u(t,x) & \!\!\to\!\! & 0\!\! & \displaystyle\text{uniformly in $\overline{\Omega\setminus \mathop{\bigcup}_{i\in I}\mathcal{H}_i}$},
\eaa\right.
\ee
as $t\rightarrow -\infty$ for some real numbers $(\sigma_i)_{i\in I}$.
\end{theorem}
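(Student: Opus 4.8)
The plan is to construct $u(t,x)$ as the limit of a monotone sequence of solutions to initial-value (or initial-boundary-value) problems, using the standard sub/supersolution sandwich that produces entire solutions emanating from fronts. First I would fix a large parameter $n\in\mathbb{N}$ and, for each $i\in I$, place a time-shifted copy of the almost-planar front $u^i_l(t+\sigma_i,x)$ deep inside the extended branch $\widetilde{\mathcal H}_i$; the shifts $\sigma_i$ are chosen (this is where $c^i_l>0$ is used) so that at time $t=-n$ all these fronts are located far out in the $-e_i$ direction, hence far from the core ball $B(0,L)$ and from all other branches. I would then define an initial datum at $t=-n$ roughly as $\underline u_n(-n,\cdot):=\max_{i\in I}\big(u^i_l(-n+\sigma_i,\cdot)\cdot\chi_i\big)$ suitably cut off so that it lives in $\overline\Omega$, equals the front profile well inside each $\mathcal H_i$, $i\in I$, and equals $0$ outside $\bigcup_{i\in I}\mathcal H_i$. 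Solving \eqref{eq1.1} forward from this datum gives a solution $u_n$ on $[-n,+\infty)\times\overline\Omega$.

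Next I would establish the two key monotonicities. (a) In $n$: using that $\underline u_n(-n,\cdot)$ is a (generalized) subsolution — because each $u^i_l$ solves the extended equation \eqref{eq-extension}, which coincides with \eqref{eq1.1} on $\mathcal H_i$, and a maximum of subsolutions is a subsolution, while the cutoff can only decrease the datum — the comparison principle gives $u_n(t,x)\le u_{n+1}(t,x)$ after matching time intervals, and also $u_n$ is nondecreasing in $t$. (b) Uniform bounds: $0<u_n<1$ by the stable-zero structure \eqref{eq-F} and the comparison principle. Hence $u_n\uparrow u$ locally uniformly (via parabolic estimates) to an entire solution $u$ of \eqref{eq1.1} with $0\le u\le1$ and $u_t\ge0$; the strong maximum principle upgrades this to $0<u<1$. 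Monotonicity in $t$ forces the limits $u(-\infty,\cdot)$ and $u(+\infty,\cdot)$ to exist and to be stationary solutions.

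It remains to identify the behavior as $t\to-\infty$, i.e.\ \eqref{frontlike}. For the lower bound one has $u(t,x)\ge u^i_l(t+\sigma_i,x)$ on $\overline{\mathcal H_i}\cap\overline\Omega$ directly from the subsolution and passage to the limit. For the matching upper bound I would use a supersolution: for each $i\in I$, $\min\{1,\,u^i_l(t+\sigma_i',x)+$ (exponentially small correction$)\}$ with $\sigma_i'$ slightly larger than $\sigma_i$ serves as a supersolution on the branch as long as the front has not yet reached the core; combined with an exponential decay estimate near $0$ coming from \eqref{eq-F} (the term $f\le-\gamma u$), one squeezes $u(t,\cdot)$ between two translates of $u^i_l$ whose gap is controlled, giving uniform convergence to a single translate on $\overline{\mathcal H_i}\cap\overline\Omega$. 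Outside $\bigcup_{i\in I}\mathcal H_i$, the datum is $0$ there and the front on each $\mathcal H_i$, $i\in I$, recedes to $-\infty$ as $t\to-\infty$, so a supersolution of the form $\varepsilon+Ce^{-\gamma(t-\text{dist})}$ (again using $f\le-\gamma u$) forces $u(t,\cdot)\to0$ uniformly there. Here Assumption~\ref{assumption-limiting} enters: to guarantee that the pieces $u^i_l$ in different branches do not interfere and that the squeezing estimates are uniform, one passes to limiting equations along sequences $t_n\to-\infty$ in each branch and uses the uniqueness of the limiting almost-planar fronts to pin down the constants $\sigma_i$.

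\medskip

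\noindent\textbf{Main obstacle.} The delicate point is the \emph{upper} bound in \eqref{frontlike} — showing $u(t,x)-u^i_l(t+\sigma_i,x)\to0$ rather than merely $u\ge u^i_l(t+\sigma_i,\cdot)-o(1)$. Getting a clean supersolution that dominates $u$ on all of $\overline{\mathcal H_i}\cap\overline\Omega$ uniformly in $t\to-\infty$, while simultaneously controlling the "tail" of $u$ coming into the branch from the core $B(0,L)$, requires the uniform geometric estimates on the front location from Assumptions~\ref{assumption-l} and~\ref{assumption-limiting}; this is the step I expect to carry the real weight of the proof.
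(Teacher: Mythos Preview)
Your overall strategy matches the paper's---subsolution built from the incoming fronts, Cauchy problems from $t=-n$, monotone limit, supersolution squeeze for the $t\to-\infty$ asymptotics---but the execution has two linked gaps. The claim that $u^i_l(t+\sigma_i,\cdot)\cdot\chi_i$ is a (generalized) subsolution because ``the cutoff can only decrease the datum'' is wrong: decreasing a solution does not produce a subsolution, and at the cutoff edge your function drops from a strictly positive value to $0$, which the comparison principle does not tolerate (already for the heat equation the solution with datum $\chi_{\{x>0\}}$ falls below $1$ immediately for small $x>0$). The paper repairs this by subtracting an exponential barrier, $\underline u=\max\{u^i_l(\underline\zeta(t),x)-\tilde\delta\,e^{-\lambda_i(x\cdot e_i-L)}\psi_i(x),\,0\}$, so that the function reaches $0$ continuously before the junction $\{x\cdot e_i=L\}$ and $\max$ with the subsolution $0$ is legitimate. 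Because the branches are curved and the boundary condition is the oblique one $\nu A\nabla u=0$, a bare exponential will not satisfy the boundary inequality; the bounded weight $\psi_i$ with $\partial_\nu\psi_i\ge 1$ (Lemma~\ref{lemma-psi}) is needed so that $\nu A\nabla(e^{-\lambda_i x\cdot e_i}\psi_i)\ge 0$. The same barrier with the opposite sign is what makes the branch supersolution work, glued with the constant $\varepsilon$ on $\overline{\Omega}\setminus\overline{\mathcal H_i(L_\varepsilon)}$.

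Second, you misidentify where Assumption~\ref{assumption-limiting} enters. It is not used to ``pin down the constants $\sigma_i$''; those are free. Its actual role (Lemma~\ref{muil}) is a compactness argument along $t_n\to-\infty$ proving the uniform lower bound $(u^i_l)_t\ge k>0$ on the moving transition zone $\{|x\cdot e_i+c^i_l t|\le M_\delta\}$ for all sufficiently negative $t$. This bound is exactly what allows the time drifts $\underline\zeta(t)=t-\omega e^{\delta t}$ and $\overline\zeta(t)=t+\omega e^{\delta t}$---absent from your sketch---to absorb the $O(e^{\delta t})$ error introduced by the barrier when one verifies $N(t,x)\lessgtr 0$ in the transition zone; away from that zone the stability assumption~\eqref{eq-F} handles the error, but inside it only the term $-\omega\delta e^{\delta t}(u^i_l)_t$ can dominate, and for that you need $(u^i_l)_t\ge k$. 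Without this mechanism your ``squeezing'' between two translates of $u^i_l$ cannot be closed.
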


We then investigate the large time behavior of the entire solution $u(t,x)$ in Theorem~\ref{Th1}. By the standard parabolic estimates, one knows that there is a $C^2(\overline{\Omega})$ solution $p:\overline{\Omega}\rightarrow (0,1]$ of
\begin{eqnarray}
\left\{\begin{array}{lll}
&-$div$(A(x)\nabla p)+q(x)\cdot \nabla p=f(x,p), \quad &x\in\Omega,\\
&\nu A(x)\nabla p=0,& t\in\R,\ x\in\partial\Omega,
\end{array}
\right.
\end{eqnarray}
such that $u(t,x)\rightarrow p(x)$ as $t\rightarrow +\infty$ locally uniformly in $x\in\overline{\Omega}$.
For the propagation in a domain with multiple cylindrical branches $\Omega$, two cases may occur, that is, the propagation is complete or blocked. Here, we mean the complete propagation by $p\equiv 1$ and we mean the blocked propagation by $p<1$. Both completely propagating and blocking phenomena have been proved to exist in many kinds of domains, such as exterior domains \cite{BHM}, bilaterally straight cylinders \cite{BBC,CG,RRBK} and some periodic domains \cite{DR}, under some geometrical conditions on domains respectively. Except the geometry of domains may block the propagation, the heterogeneity of the coefficients can also block the propagation, see \cite{E1}. If one treats the almost-planar front connecting $0$ and $1$ with positive propagation speed as an invasion of $0$ by $1$ in the sense of \cite{BH2}, then the negative propagation speed means $0$ invading $1$. The entire solution in Theorem~\ref{Th1} means that $1$ invades $0$ from branches $\mathcal{H}_i$. Similar as Theorem~\ref{Th1} and by providing Assumption~\ref{assumption-r}, one can replace the roles of $0$ and $1$ through replacing $u$ and $f(x,u)$ by $1-u$ and $-f(x,1-u)$ to prove that if $c^j_r<0$ for some $j\in J$, $0$ invades $1$ from branches $\mathcal{H}_j$. In other words, $c^j_r<0$ implies that $1$ can not invade $0$ in branches $\mathcal{H}_j$, that is, the propagation is blocked.

\begin{corollary}\label{cor1}
Let Assumptions~\ref{assumption-r}, \ref{assumption-l} and \ref{assumption-limiting} hold. Let $I$ and $J$ be the non-empty sets and $u(t,x)$ be the entire solution satisfying \eqref{frontlike} in Theorem~\ref{Th1}. If $c^i_r< 0$ for some $j\in J$, the solution $u(t,x)$ is blocked in $\mathcal{H}_j$, that is, $\sup_{x\in\mathcal{H}_j} p(x)< 1$.
\end{corollary}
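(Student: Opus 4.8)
The plan is to argue by contradiction, turning the qualitative picture suggested just before the statement — that $c^j_r<0$ means the branch $\widetilde{\mathcal H}_j$ only carries an $e_j$‑facing almost‑planar front that \emph{recedes} toward the junction, so $1$ cannot push out along $\mathcal H_j$ — into a rigidity statement obtained by a blow‑down limit along $\mathcal H_j$ together with the uniqueness in Assumption~\ref{assumption-limiting}. So I would assume $\sup_{\mathcal H_j}p=1$ and aim for a contradiction. First, if this supremum were attained at some $x_\star\in\overline{\mathcal H_j}$, then $p(x_\star)=1$ and, since $1$ solves the stationary version of \eqref{eq1.1} and $p\le 1$, the strong maximum principle and Hopf's lemma (with $\Omega$ connected) would force $p\equiv1$ in $\overline\Omega$. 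In all cases, using that $\mathcal H_j$ is unbounded with uniformly bounded width, I obtain $x_n\in\mathcal H_j$ with $x_n\cdot e_j\to+\infty$ and $p(x_n)\to1$, and for each $n$ a unique time $\tau_n$ with $u(\tau_n,x_n)=1/2$ (recall $t\mapsto u(t,x_n)$ is increasing from $0$ to $p(x_n)>1/2$).

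Next I would show $\tau_n\to+\infty$. If not, then along a subsequence $\tau_n$ is bounded. Writing $x_n=(x_n\cdot e_j)e_j+x_n^\perp$ with $|x_n^\perp|$ bounded and extracting a further subsequence so that $x_n^\perp$ converges, Assumption~\ref{assumption-limiting} (with $t_n=x_n\cdot e_j$, the transverse shift only translating the limiting cylinder) makes $\mathcal H_j-x_n$ and the coefficients $A(\cdot+x_n)$, $q(\cdot+x_n)$, $f(\cdot+x_n,\cdot)$ converge to an infinite cylinder $\mathcal H_j^\infty$ parallel to $e_j$ and to limiting coefficients; by parabolic estimates $u(\cdot+\tau_n,\cdot+x_n)$ converges in $C^{1,2}_{loc}$ to an entire solution $u_\infty$ of the limiting equation with $u_\infty(0,0)=1/2$. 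Since $u(t,\cdot)\to0$ uniformly on $\overline{\mathcal H_j}\cap\overline\Omega$ as $t\to-\infty$ and $\tau_n$ is bounded, $u_\infty(t,\cdot)\to0$ uniformly on $\mathcal H_j^\infty$ as $t\to-\infty$; but then the exponential‑stability estimate at the zero state (comparing $e^{\gamma(t-t_0)}u_\infty$ with the solution of the linearized problem without zeroth–order term, using $f\le-\gamma\cdot$ on $[0,\sigma]$, and letting $t_0\to-\infty$) forces $u_\infty\equiv0$, contradicting $u_\infty(0,0)=1/2$. Hence $\tau_n\to+\infty$.

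With $\tau_n\to+\infty$, pass to the limit as above: $u(\cdot+\tau_n,\cdot+x_n)\to u_\infty$ in $C^{1,2}_{loc}$, an entire solution of the limiting equation on $\R\times\mathcal H_j^\infty$, non‑decreasing in $t$ (a limit of non‑decreasing functions), with $0<u_\infty<1$ and $\partial_t u_\infty>0$ by the strong maximum principle (as $u_\infty(0,0)=1/2$), and $u_\infty\le 1$ because $u(\cdot+\tau_n,\cdot+x_n)\le p(\cdot+x_n)$ and $p(\cdot+x_n)$ converges to the limiting stationary solution equal to $1$ at the origin, hence identically $1$. The states $p^\mp:=u_\infty(\mp\infty,\cdot)$ are then stationary solutions of the limiting equation with values in $[0,1]$ and $p^-(0)\le1/2\le p^+(0)$; since the limiting almost‑planar fronts $v_r$, $v_l$ of Assumption~\ref{assumption-limiting} have nonzero speeds — in particular the speed of $v_r$ equals $c^j_r<0$, obtained by translating the front $u^j_r$ of Assumption~\ref{assumption-r} along $\widetilde{\mathcal H}_j$ (re‑centering its interface $\{x\cdot e_j=c^j_r t\}$) and invoking uniqueness — the only stationary solutions in $[0,1]$ are $0$ and $1$, so $p^-\equiv0$ and $p^+\equiv1$. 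Thus $u_\infty$ is a monotone‑in‑time entire solution connecting $0$ and $1$ on a straight cylinder; a standard argument upgrades the limits to be uniform and the transition region to have bounded width, so $u_\infty$ is a transition front connecting $0$ and $1$, necessarily almost‑planar. Because the $1$‑invasion carried by $u$ enters $\mathcal H_j$ from the junction side (which, in the translated frame, sits at $x\cdot e_j\to-\infty$), the $\approx1$ phase of $u_\infty$ is $\Omega^+_t=\{x\cdot e_j<\xi_t\}$ with $\xi_0=0$, i.e. $u_\infty$ faces direction $e_j$. By the uniqueness in Assumption~\ref{assumption-limiting}, $u_\infty=v_r(\cdot+\theta,\cdot)$ for some $\theta$, so $\xi_t=c^j_r(t+\theta)$ is strictly decreasing; then $\Omega^+_t$ shrinks and $u_\infty$ is strictly decreasing in $t$, contradicting $\partial_t u_\infty>0$. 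This contradiction gives $\sup_{\mathcal H_j}p<1$.

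The main obstacle is this last step, and specifically two sub‑points: (i) ruling out nonconstant stationary states of the limiting equation valued in $[0,1]$, a rigidity fact which for the configurations allowed by Assumption~\ref{assumption-limiting} (constant or $x\cdot e_j$‑independent coefficients, or the periodic case) reflects the nondegeneracy encoded in $c^j_r\ne0$ but has to be made precise — most delicately in the periodic case; and (ii) promoting $u_\infty$ from "monotone‑in‑$t$ entire solution connecting $0$ and $1$ on the cylinder" to "transition front whose interface is a moving cross‑section", which is what lets the uniqueness part of Assumption~\ref{assumption-limiting} be applied. Everything else — the reduction in the first paragraph, the compactness and the elimination of bounded $\tau_n$, and the final sign comparison — is routine once the limiting equation is in hand. (The mirrored construction evoked before the statement, i.e. Theorem~\ref{Th1} applied to $1-u$, $-f(\cdot,1-u)$ and the index $\{j\}$ under Assumption~\ref{assumption-r}, gives the same intuition — "$0$ invades $1$ from $\mathcal H_j$" — and is in fact where the speed $c^j_r<0$ enters through the $-e_j$‑facing front $1-u^j_r$ of speed $-c^j_r>0$; it could be used to produce a barrier, but because the emanating solution from Theorem~\ref{Th1} can be close to $1$ near the junction inside $\mathcal H_j$, no \emph{global} comparison is available, which is precisely why the argument above is localized at infinity.)
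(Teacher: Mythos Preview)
Your approach diverges substantially from the paper's, and the route you explicitly dismiss in your final paragraph is exactly the one the paper takes --- and it works.

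The paper constructs the barrier $v$ you mention: applying the construction of Theorem~\ref{Th1} to $1-u$ with nonlinearity $-f(\cdot,1-u)$ and the single branch $\{j\}$ (legitimate since $1-u^j_r$ is a $(-e_j)$-facing almost-planar front of speed $-c^j_r>0$), one gets a time-\emph{decreasing} entire solution $v$ of~\eqref{eq1.1} with $v(t,\cdot)-u^j_r(t,\cdot)\to0$ uniformly in $\overline{\mathcal H_j}\cap\overline\Omega$ and $v(t,\cdot)\to1$ uniformly in $\overline{\Omega\setminus\mathcal H_j}$ as $t\to-\infty$. Your objection --- that $u$ could be close to $1$ ``near the junction inside $\mathcal H_j$'' --- misreads the asymptotics. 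Since $j\in J$, \eqref{frontlike} gives $u(t,\cdot)\to0$ uniformly on $\overline{\Omega\setminus\bigcup_{i\in I}\mathcal H_i}\supset\overline{\mathcal H_j}\cap\overline\Omega$ as $t\to-\infty$; and on $\bigcup_{i\in I}\overline{\mathcal H_i}$ one has $u\le1$ while $v\to1$ there. Hence for any $\varepsilon>0$ and $t_0$ negative enough, $u(t_0,\cdot)-\varepsilon\le v(t_0,\cdot)$ \emph{globally} on $\overline\Omega$. The paper then runs the standard perturbed subsolution $u^-(t,x)=u(t_0+t-\omega\varepsilon(1-e^{-\delta t}),x)-\varepsilon e^{-\delta t}$ (the machinery of Section~2.2, Lemma~\ref{lemma-m}) to propagate this forward, lets $t_0\to-\infty$ and then $\varepsilon\to0$, and obtains $u(t,\cdot)\le v(t,\cdot)$ for all $t$. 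Since $v$ is decreasing in $t$ with $\sup_{\mathcal H_j}v(t,\cdot)<1$ for every $t$ (inherited from the almost-planar front $u^j_r$), the conclusion $\sup_{\mathcal H_j}p<1$ follows in a few lines.

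Your own route has the two obstacles you flag, and they are genuine. Point~(i) --- that the only $[0,1]$-valued stationary solutions of the limiting cylinder problem are $0$ and $1$ --- does not follow from the mere existence of a nonzero-speed front; in heterogeneous (especially periodic) media one needs a separate sliding or comparison argument that Assumption~\ref{assumption-limiting} does not supply. Point~(ii) --- that a time-monotone entire solution connecting $0$ and $1$ on the cylinder is automatically an almost-planar transition front with interfaces of the form $\Gamma_t=\{x\cdot e_j=\xi_t\}$, which is what the uniqueness in Assumption~\ref{assumption-limiting} requires --- demands control on the spatial width of the transition zone that the temporal limits $p^\pm$ alone do not give. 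These gaps may be fillable, but at nontrivial cost; the barrier argument sidesteps them entirely.
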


We assert in the following theorem that if the propagation of $u(t,x)$ is unblocked in some branches $\mathcal{H}_j$ for some $j\in J$, that is,
\be\label{p}
\liminf_{x\in \mathcal{H}_j;\ x\cdot e_j\rightarrow +\infty} p(x)=1,
\ee
then $u(t,x)$ will eventually converge to the almost-planar front facing direction $e_j$ in $\mathcal{H}_j$.

\begin{theorem}\label{th2}
Let Assumptions~\ref{assumption-r}, \ref{assumption-l} and \ref{assumption-limiting} hold. Let $I$ and $J$ be the non-empty sets and $u(t,x)$ be the entire solution satisfying \eqref{frontlike} in Theorem~\ref{Th1}. Assume that $J_1$ is a non-empty subset of $J$. If no blocking occurs in the branch $\mathcal{H}_j$ for all $j\in J_1$ in the sense of \eqref{p}, then $c^j_r>0$ and the solution $u(t,x)$ converges to the almost-planar front facing direction $e_j$ in $\mathcal{H}_j$ for every $j\in J_1$ eventually, that is, there exist some real umbers $\tau_j$ such that
\[u(t,x)-u^j_r(t+\tau_j,x)\rightarrow 0,\]
for $x\in\overline{\mathcal{H}_j}$ such that $x\cdot e_j\ge \varepsilon t$ as $t\rightarrow +\infty$, where $\varepsilon$ is an arbitrary positive constant.
\end{theorem}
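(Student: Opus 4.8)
The plan is to combine three ingredients: (i) the convergence $u(t,\cdot)\to p$ together with the translation of ``no blocking'' into uniform closeness of $p$ to $1$ near infinity in $\mathcal{H}_j$; (ii) comparison arguments that trap the level sets of $u$ inside $\mathcal{H}_j$ between two rays of positive slope, and that control $u$ far ahead of and far behind its interface; and (iii) a compactness argument in the moving frame which, via the uniqueness of the almost-planar front (Assumptions~\ref{assumption-r} and~\ref{assumption-limiting}), identifies the asymptotic profile of $u$ in $\mathcal{H}_j$ with a time-translate of $u^j_r$. First I would record the standard facts: since the entire solution $u$ of Theorem~\ref{Th1} is time-increasing and $(0,1)$-valued, parabolic estimates give $u(t,\cdot)\to p$ in $C^2_{loc}(\overline{\Omega})$ as $t\to+\infty$ with $p\in(0,1]$ solving the stated elliptic problem, and $u\le p$; moreover, since $j\in J$, the second line of~\eqref{frontlike} gives $u(t,\cdot)\to 0$ \emph{uniformly} on $\overline{\mathcal{H}_j}\cap\overline{\Omega}$ as $t\to-\infty$. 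The hypothesis $\liminf_{x\in\mathcal{H}_j,\,x\cdot e_j\to+\infty}p(x)=1$ together with $p\le 1$ self-improves to $p(x)\to 1$ as $x\cdot e_j\to+\infty$, uniformly in the (uniformly bounded) cross-sections. For the sign of the speed: by Corollary~\ref{cor1}, $c^j_r<0$ would give $\sup_{\mathcal{H}_j}p<1$, contradicting no blocking, so $c^j_r\ge 0$; to exclude $c^j_r=0$ I would use that a zero-speed almost-planar front facing $e_j$ for~\eqref{eq-extension} produces (after a time shift) a nontrivial stationary barrier $\phi$ on $\widetilde{\mathcal{H}}_j$ with $\phi\to 1$ (resp.\ $\phi\to 0$) as $x\cdot e_j\to-\infty$ (resp.\ $+\infty$); sliding $\phi(\cdot-se_j)$ against $p$ on $\mathcal{H}_j$ (they solve the same equation and share the lateral Neumann condition) and invoking the strong maximum principle and Hopf's lemma forces $p(x)\to 0$ as $x\cdot e_j\to+\infty$, a contradiction (if the zero-speed front is genuinely time-dependent, compare $p$ with its supremum in $t$). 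Hence $c^j_r>0$.

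Next, I would trap the interface of $u$ inside $\mathcal{H}_j$. Define $\eta(t)=\sup\{x\cdot e_j:\ x\in\overline{\mathcal{H}_j},\ u(t,x)=\lambda_0\}$ for a fixed level $\lambda_0$ above the metastable threshold of the extended-branch problems. For the \emph{upper bound}, starting from a time $T_0\ll 0$ at which $u(T_0,\cdot)\le \delta$ uniformly on $\overline{\mathcal{H}_j}$, I would build a supersolution on $\overline{\mathcal{H}_j}$ of the form $\overline{u}(t,x)=\max\{\delta,\ \min\{1,\ \psi(x)\,e^{-\lambda(x\cdot e_j-V(t-T_0))}\}\}$, where $\psi>0$ is a cross-sectional correction absorbing the lateral Neumann condition in the (possibly curved) branch and $V,\lambda$ are large in terms of the bounds on $A,q$ and the Lipschitz constant of $f$; since $\overline{u}\equiv 1$ at the mouth $\{x\cdot e_j=0\}$ and $\overline{u}(T_0,\cdot)\ge u(T_0,\cdot)$, the comparison principle yields $u\le\overline{u}$, hence $\eta(t)\le Vt+C$, and $u(t,x)$ is arbitrarily small once $x\cdot e_j\ge (V+1)t$. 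For the \emph{lower bound}, since $u(t,\cdot)\to p$ and $p\to 1$ far out in $\mathcal{H}_j$, for every large $t$ one has $u(t,\cdot)\ge 1-\delta$ on a ball in $\mathcal{H}_j$ that can be taken as large as desired by enlarging $t$; a moving compactly supported subsolution (available because $1$ is a uniformly stable zero) placed inside that ball shows $\{u\ge 1-\delta\}$ invades $\mathcal{H}_j$, and comparing with a translate of $u^j_r$ shows the invasion speed is at least $c^j_r$, so $\eta(t)\ge c_1 t$ with $c_1>0$. Finally, combining the bounded interface speed with the stability of $1$ (an ODE/comparison sweeping) yields the companion estimate that $u$ is $(1-\delta)$-close to $1$ on $\{x\in\overline{\mathcal{H}_j}:\ R_\ast\le x\cdot e_j\le\eta(t)-K_\delta\}$ for $t$ large, i.e.\ $u$ is uniformly close to $1$ well behind $\eta(t)$; with the supersolution, $u$ is uniformly close to $0$ well ahead of $\eta(t)$.

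Then comes the compactness step. Fix $\varepsilon\in(0,c^j_r)$. For any sequence $t_n\to+\infty$ put $s_n=\eta(t_n)\to+\infty$ and consider $u_n(t,x)=u(t+t_n,\ x+s_ne_j)$, solving on $\mathcal{H}_j-s_ne_j$ the equation with coefficients $A(\cdot+s_ne_j)$, $q(\cdot+s_ne_j)$, $f(\cdot+s_ne_j,\cdot)$. By Assumption~\ref{assumption-limiting}, along a subsequence $\mathcal{H}_j-s_ne_j$ converges to an infinite cylinder $\mathcal{H}_j^\infty$ and the coefficients converge to $A_\infty,q_\infty,f_\infty$ whose limiting equation admits a unique (up to time shift) almost-planar front $v_r$ facing $e_j$; by parabolic estimates $u_n\to U_\infty$ locally uniformly along a further subsequence, an entire solution of the limiting equation. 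Using the first two paragraphs, $U_\infty$ connects $0$ and $1$, is almost-planar and faces $e_j$, so $U_\infty$ is a time-translate of $v_r$; its interface travels at speed $c^j_r$, which pins $\eta(t_n)/t_n\to c^j_r$. Running the same argument with $u^j_r$ in place of $u$ — whose translates $u^j_r(t+t_n,\cdot+c^j_r t_ne_j)$ also converge to a time-translate of $v_r$ — shows $u$ and a suitable time-translate of $u^j_r$ share the same moving-frame limit along the subsequence. A Fife--McLeod type sub/supersolution squeeze on $\mathcal{H}_j$ (trap $u$ between $u^j_r(\cdot+\tau^-,\cdot)$ and $u^j_r(\cdot+\tau^+,\cdot)$ for $t\ge t_\ast$, using the exponential approach of $u^j_r$ to $0$ and $1$ and the estimates of the previous paragraph at the mouth and far behind, then let $\tau^\pm$ tend to a common $\tau_j$) removes the subsequence and upgrades the convergence to $u(t,x)-u^j_r(t+\tau_j,x)\to 0$ uniformly for $x\in\overline{\mathcal{H}_j}$ with $x\cdot e_j\ge\varepsilon t$, as $t\to+\infty$; since $\tau_j$ does not depend on $\varepsilon$ and a smaller $\varepsilon$ gives a larger region, this holds for every $\varepsilon>0$ (for $\varepsilon\ge c^j_r$ the statement just says $u\to 0$ there, which is then automatic since $u^j_r(t+\tau_j,\cdot)\to 0$ on $\{x\cdot e_j\ge\varepsilon t\}$).

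The main obstacle is the middle paragraph. Unlike the classical picture of a front invading a stable state on a half-line, here $p\to 1$ at $+\infty$ so $u$ is eventually large \emph{everywhere} in $\mathcal{H}_j$, and one must cleanly separate the interface region — which escapes to $+\infty$ at speed exactly $c^j_r$ — from the behavior at fixed points. Concretely the delicate points are: (a) constructing the exponential supersolution so that it respects the lateral Neumann condition in a curved branch (the correction $\psi$); (b) the lower bound on the invasion speed via a moving bump subsolution anchored deep inside $\mathcal{H}_j$, where $u\approx 1$, rather than at the mouth where we have no a priori control; and (c) the uniform ``closeness to $1$ far behind $\eta(t)$'' estimate, which is what makes the final squeeze uniform on the whole region $\{x\cdot e_j\ge\varepsilon t\}$ and not merely near $\eta(t)$. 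Once these are in place, identifying the moving-frame limit with $v_r$ (hence with a translate of $u^j_r$) and performing the squeeze are routine.
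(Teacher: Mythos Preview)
Your overall architecture---trap, pass to a moving frame, identify the limit via uniqueness, then a stability/squeeze lemma---is the same as the paper's. The substantive difference is \emph{what} you use as comparison functions in the trapping step. You build auxiliary barriers (an exponential supersolution with some large speed $V$ to bound $\eta(t)$ from above, and a moving bump subsolution to bound it from below), and only bring $u^j_r$ in at the compactness stage. The paper instead uses $u^j_r$ itself from the start: its Lemma~3.1 shows directly that, for $t$ large and $x\in\overline{\mathcal{H}_j(L_1)}$,
\[
u^j_r(t-t_1+\tau_1,x)-\delta e^{-\delta(t-t_1)}-\widetilde{\delta}e^{-\lambda_j(x\cdot e_j-L_1)}\psi_j(x)\le u(t,x)\le u^j_r(t-t_2+\tau_2,x)+\text{(same corrections)},
\]
where the lower bound uses that $u\ge 1-\delta'$ on a finite slab $L_1\le x\cdot e_j\le L_1+2M_\delta+R$ (this is exactly the consequence of ``no blocking''), and the upper bound uses that $u(t_2,\cdot)\le\delta$ on $\overline{\mathcal{H}_j(L_1)}$ for some early $t_2$ (your own observation, from $j\in J$). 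This single lemma replaces your whole middle paragraph: it gives the upper and lower speed bounds, the ``close to $1$ behind / close to $0$ ahead'' estimates, and---crucially---already phrases everything in terms of time-shifts of $u^j_r$, so the compactness step is immediate (one shifts by $c^j_r t_n e_j$, not by your $\eta(t_n)$, and the two-sided inequality survives the limit to trap $u_\infty$ between two time-shifts of $v_\infty$). The paper's Lemma~3.3 is then exactly your final Fife--McLeod squeeze, and it is proved by reusing the same sub/supersolutions from Lemma~3.1 with $\varepsilon$ in place of $\delta$.

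So your route should work, but it is more laborious and the delicate points you flag in (a)--(c) are precisely the places where the paper's shortcut pays off: the cross-sectional correction $\psi_j$ and the exponential weight appear only once (in Lemma~3.1), and there is no need to track $\eta(t)$ or build a separate bump subsolution. One minor note: your exclusion of $c^j_r=0$ via a sliding barrier against $p$ is a reasonable addition; the paper simply asserts $c^j_r>0$ ``by Corollary~\ref{cor1}'' without isolating the zero-speed case.
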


\begin{corollary}\label{cor2}
Let $u(t,x)$ be the entire solution in Theorem~\ref{th2}.
If $u$ propagates completely in $\overline{\Omega}$, that is, $p\equiv 1$, then $J_1\equiv J$ and the entire solution $u(t,x)$ is a transition front connecting $0$ and $1$ with $(\Gamma_t)_{t\in \R}$, $(\Omega^{\pm}_t)_{t\in \R}$ defined by
\be\label{eq+1.11}
\Gamma_t\!=\!\mathop{\bigcup}_{i\in I}\!\big\{x\!\in\!\mathcal{H}_i\cap\Omega: x\cdot e_i\!=\!c^i_l|t|\!+\!A\big\}\ (t\!\le\!0),\ \Gamma_t\!=\!\mathop{\bigcup}_{j\in J_1}\!\big\{x\!\in\!\mathcal{H}_j\cap\Omega: x\cdot e_j\!=\!c^j_r t\!+\!A\big\}\ (t\!>\!0),
\ee
and
\begin{eqnarray}\label{eq+1.12}
\left\{\baa{llll}
\displaystyle\Omega_t^+=\mathop{\bigcup}_{i\in I}\big\{x\in \mathcal{H}_i\cap\Omega: x\cdot e_i>c^i_l|t|+A\big\}, & \Omega_t^-=\Omega\setminus\overline{\Omega_t^+}, &\text{ for $t\le0$,}\vspace{3pt}\\
\displaystyle\Omega_t^-=\mathop{\bigcup}_{j\in J}\big\{x\in \mathcal{H}_j\cap\Omega: x\cdot e_j>c^j_r t+A\big\}, & \Omega_t^+=\Omega\setminus\overline{\Omega_t^-}, &\text{ for $t>0$,}\eaa\right.
\end{eqnarray}
for some $A>0$. Moreover, there exist some real numbers $(\tau_j)_{j\in J}$ such that
\be\label{largetime}\left\{\baa{rcll}
u(t,x)-u^j_r(t+\tau_j,x) & \!\!\!\to\!\!\! & 0\!\! & \text{uniformly in $\overline{\mathcal{H}_j}\cap\overline{\Omega}\,$ for every $j\in J$},\vspace{3pt}\\
u(t,x) & \!\!\!\to\!\!\! & 1\!\! & \displaystyle\text{uniformly in $\overline{\Omega\setminus \mathop{\bigcup}_{j\in J}\mathcal{H}_j}$},\eaa\right.
\ee
as $t\rightarrow+\infty$.
\end{corollary}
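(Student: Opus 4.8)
The plan is to combine Theorem~\ref{th2}, which controls $u$ on the advancing part of each forward branch, with a sweeping/comparison argument for the bounded‑speed wake behind it, and with Theorem~\ref{Th1} together with time‑monotonicity, which control the source branches and the core; a final bookkeeping step then packages these estimates into sets $(\Gamma_t,\Omega^{\pm}_t)$ satisfying Definition~\ref{TF}. First, since $p\equiv1$ we have $\liminf_{x\in\mathcal H_j,\,x\cdot e_j\to+\infty}p(x)=1$ for every $j\in J$, i.e. no blocking occurs in any branch in the sense of~\eqref{p}, so Theorem~\ref{th2} applies with $J_1=J$: for each $j\in J$ we obtain $c^j_r>0$ and $\tau_j\in\R$ such that $u(t,x)-u^j_r(t+\tau_j,x)\to0$ as $t\to+\infty$, uniformly on $\{x\in\overline{\mathcal H_j}\cap\overline\Omega:\ x\cdot e_j\ge\varepsilon t\}$ for each fixed $\varepsilon>0$. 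Since this holds for arbitrarily small $\varepsilon$, the first line of~\eqref{largetime} is established on the ``far'' part of each forward branch, and it only remains to control $u$ on $D_j(t):=\{x\in\overline{\mathcal H_j}\cap\overline\Omega:\ 0\le x\cdot e_j\le\varepsilon t\}$ for a small fixed $\varepsilon\in(0,c^j_r)$.

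\emph{The wake of the front (main obstacle).} Fix $\eta>0$ and $j\in J$. On $D_j(t)$ the interface $\{x\cdot e_j=c^j_r(t+\tau_j)\}$ of $u^j_r(t+\tau_j,\cdot)$ lies at geodesic distance at least $(c^j_r-\varepsilon)t-C$ from every point, so the transition‑front property~\eqref{eq1.7} for $u^j_r$ gives $u^j_r(t+\tau_j,x)\ge1-\eta$ on $D_j(t)$ for $t$ large; since $u^j_r<1$, it therefore suffices to prove $u(t,x)\ge1-C\eta$ on $D_j(t)$ for $t$ large. I would obtain this by a comparison argument on the moving region $\{(s,x):\ s\ge t_0,\ x\in D_j(s)\}$: on its parabolic boundary $u$ is already close to $1$, namely $u\ge1-\eta$ on the bounded base $\partial\mathcal H_j\cap\Omega$ by the local uniform convergence $u(t,\cdot)\to p\equiv1$, and $u\ge1-2\eta$ on the moving cap $\{x\cdot e_j=\varepsilon s\}\cap\mathcal H_j$ by the previous paragraph; so if one can exhibit, in the bounded‑width branch $\mathcal H_j$, a subsolution of~\eqref{eq1.1} lying below $u$ on this parabolic boundary and invading the state $1$ at some positive speed below $c^j_r$, the comparison principle forces $u\ge1-C\eta$ on all of $D_j(t)$. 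Producing such a subsolution that is compatible with the curved lateral boundary of $\mathcal H_j$ and the Neumann condition — presumably by suitably perturbing and time‑shifting $u^j_r$ itself (which solves~\eqref{eq1.1} in $\mathcal H_j$ and satisfies the boundary condition there) and using that $1$ is a uniformly stable zero, cf.~\eqref{eq-F} — is the step I expect to be the main difficulty, since the region to be filled grows linearly in $t$ and soft (locally uniform) convergence does not suffice. Granting it, the first line of~\eqref{largetime} holds on all of $\overline{\mathcal H_j}\cap\overline\Omega$.

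\emph{Source branches and core.} The set $\Omega\setminus\bigcup_{j\in J}\mathcal H_j$ is the union of the bounded core $\Omega\cap B(0,L)$ and the full source branches $\mathcal H_i\cap\Omega$, $i\in I$. Given $\eta>0$, Theorem~\ref{Th1} provides $T_\eta<0$ with $|u(T_\eta,x)-u^i_l(T_\eta+\sigma_i,x)|<\eta$ for all $i\in I$ and $x\in\overline{\mathcal H_i}\cap\overline\Omega$; since $c^i_l>0$, the property~\eqref{eq1.7} for $u^i_l$ gives $u^i_l(T_\eta+\sigma_i,x)\ge1-\eta$, hence $u(T_\eta,x)\ge1-2\eta$, whenever $x\in\mathcal H_i\cap\Omega$ with $x\cdot e_i\ge R_\eta:=-c^i_l(T_\eta+\sigma_i)+M_\eta$. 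As $u$ is time‑increasing, $u(t,x)\ge1-2\eta$ for every $t\ge T_\eta$ and every such $x$; and on the remaining, now \emph{fixed and bounded}, set $(\Omega\cap B(0,L))\cup\bigcup_{i\in I}\{x\in\mathcal H_i\cap\Omega:\ 0\le x\cdot e_i\le R_\eta\}$ one has $u(t,\cdot)\to p\equiv1$ uniformly as $t\to+\infty$. This yields the second line of~\eqref{largetime}, completing the proof of~\eqref{largetime}.

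\emph{Transition‑front structure.} Pick $A>0$ larger than all the constants $M_\eta$, $|\tau_j|$, $|\sigma_i|$ occurring above. For $t\le0$ the behaviour of $u$ is governed by~\eqref{frontlike} (already a \emph{uniform} statement, so no analogue of the wake argument is needed) with interface $\bigcup_{i\in I}\{x\cdot e_i=c^i_l|t|+A\}$, and for $t\ge0$ by~\eqref{largetime} with interface $\bigcup_{j\in J}\{x\cdot e_j=c^j_r t+A\}$; these are the sets in~\eqref{eq+1.11}--\eqref{eq+1.12}. One verifies~\eqref{eq1.3}--\eqref{eq1.6} directly: each $\Gamma_t$ is a finite union of bounded hyperplane cross‑sections, hence nonempty and contained in $n=m$ graphs, while the ``$\sup d_\Omega=+\infty$'' conditions and~\eqref{eq1.4} hold because every branch is unbounded and of bounded width. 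Finally~\eqref{eq1.7} holds: for $x\in\overline{\Omega^+_t}$ with $d_\Omega(x,\Gamma_t)\ge M$ one gets $u(t,x)\ge1-\eta$ from~\eqref{largetime} (resp.~\eqref{frontlike} and the previous paragraph, together with time‑monotonicity for $t$ in a compact range) once $A$ is large enough, and for $x\in\overline{\Omega^-_t}$ with $d_\Omega(x,\Gamma_t)\ge M$ one gets $u(t,x)\le\eta$ because such $x$ lies strictly ahead of every front $u^j_r(t+\tau_j,\cdot)$ (resp.\ ahead of $u^i_l(t+\sigma_i,\cdot)$ as $t\to-\infty$, or in the core/$J$‑branches where $u\to0$ as $t\to-\infty$ by~\eqref{frontlike}), where $u$ is close to that front and the front is itself $\le\eta$ there. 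This last verification is routine bookkeeping; the only genuine obstacle is the uniform invasion estimate of the second paragraph.
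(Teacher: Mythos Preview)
Your overall structure is sound, and the identification of the ``wake'' $D_j(t)=\{0\le x\cdot e_j\le\varepsilon t\}$ as the only nontrivial region is exactly right. However, the paper handles this region by a much shorter device that you have available but do not use: the local stability Lemma~\ref{lemma3.3}. Instead of building a subsolution on a moving domain, the paper freezes a single large time $t_n$ and observes that on the \emph{fixed} half-branch $\overline{\mathcal H_j(L_1)}$ one already has $|u(t_n,\cdot)-u^j_r(t_n+\tau_j,\cdot)|\le\varepsilon$: on $\{x\cdot e_j\ge L_\varepsilon\}$ this is~\eqref{eq+4.14} from the proof of Theorem~\ref{th2}, while on the bounded slab $\{L_1\le x\cdot e_j\le L_\varepsilon\}$ both $u(t_n,\cdot)$ and $u^j_r(t_n+\tau_j,\cdot)$ are $\ge1-\varepsilon$ (the former by locally uniform convergence to $p\equiv1$, the latter because its interface has passed). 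Lemma~\ref{lemma3.3} then propagates this $\varepsilon$-closeness forward to all $t\ge t_n$ on the \emph{same fixed} region $\overline{\mathcal H_j(L_1)}$, and letting $\varepsilon\to0$ gives the first line of~\eqref{largetime} uniformly on $\overline{\mathcal H_j(L_1)}$ with no growing-domain issue at all.

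Your proposed route---a comparison on $\{(s,x):s\ge t_0,\ x\in D_j(s)\}$ using a perturbed, time-shifted $u^j_r$ as subsolution---would work, and in fact the sub/supersolutions you would end up writing are precisely those appearing in the proof of Lemma~\ref{lemma3.3}. So your ``main difficulty'' is not a gap so much as a reinvention of that lemma in a slightly less convenient geometry. The paper's framing buys two things: the spatial domain is fixed (so locally uniform convergence to $1$ suffices on the near slab), and the stability estimate is stated once and reused. Your treatment of the source branches, the core, and the transition-front bookkeeping matches the paper.
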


\begin{remark}
Indeed, the conclusion of Corollary~\ref{cor2} covers the results of Theorem~1.7 in \cite{GHS}, where the almost-planar front in every branch is the planar front with the unique speed $c_f$.
\end{remark}

Finally, we prove a general version of Conjecture~1.13 of \cite{GHS}, that is there is only one type of transition fronts connecting $0$ and $1$ by provided the complete propagation of any entire solution emanating from an almost-planar front in every branch.  For each $i\in\{1,\cdots,m\}$, we denote by $u_i:\R\times\overline{\Omega}\rightarrow (0,1)$ the time-increasing solution of \eqref{eq1.1} emanating from the almost-planar front $u^i_l(t,x)$ in the branch $\mathcal{H}_i$, that is,
\be\label{front}
u_i(t,x)\rightarrow u^i_l(t,x)\mathop{\longrightarrow}_{t\to-\infty} 0 \hbox{ uniformly in $\overline{\mathcal{H}_i}\cap \overline{\Omega}$},\ u_i(t,x)\mathop{\longrightarrow}_{t\to-\infty} 0 \hbox{ uniformly in $\overline{\Omega\setminus \mathcal{H}_i}$}.
\ee

\begin{theorem}\label{th3}
Let Assumptions~\ref{assumption-r}, \ref{assumption-l} and \ref{assumption-limiting} hold. If for every $i\in\{1,\cdots,m\}$, the entire solution $u_i$ of \eqref{front} propagates completely in the sense that $u_i(t,x)\rightarrow 1$ as $t\rightarrow +\infty$ locally uniformly in $\overline{\Omega}$, then any transition front of \eqref{eq1.1} connecting $0$ and $1$ is of the  type~\eqref{frontlike}, \eqref{eq+1.11}-\eqref{eq+1.12}, that is, it emanates from the almost-planar fronts coming from some proper subset of branches as $t\to-\infty$ and it converges to the almost-planar fronts in the other branches as $t\to+\infty$.
\end{theorem}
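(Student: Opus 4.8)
The plan is to prove that every transition front $u$ of \eqref{eq1.1} connecting $0$ and $1$ must, as $t\to-\infty$, behave as in \eqref{frontlike} for some non-empty proper subset $I\subsetneq\{1,\dots,m\}$, and then to deduce the rest from the uniqueness of the corresponding entire solution together with Corollary~\ref{cor2}. Two preliminary observations will be used throughout. First, applying Theorem~\ref{th2} to each $u_i$ of \eqref{front} (that is, with $I=\{i\}$, $J=\{1,\dots,m\}\setminus\{i\}$, and $J_1=J$, since complete propagation of $u_i$ excludes blocking in every branch), the hypothesis yields $c^j_r>0$ for every $j\in\{1,\dots,m\}$; moreover $c^i_l>0$ for every $i$ is implicit in the very existence of the $u_i$ via Theorem~\ref{Th1}. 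Second, for any non-empty proper $I$ the entire solution $U_I$ furnished by Theorem~\ref{Th1} itself propagates completely: fixing one $i\in I$, both $U_I$ and $u_i$ are time-increasing and emanate, as $t\to-\infty$, from $u^i_l$ in $\mathcal{H}_i$ and from $0$ on the remaining branches, so after a suitable time shift $U_I\ge u_i$ near $t=-\infty$, hence $U_I\ge u_i$ on $\R\times\overline{\Omega}$ by the parabolic comparison principle and the uniform stability of $0$ and $1$; since $u_i\to1$ and $U_I\le1$, we get $U_I\to1$, i.e. $p\equiv1$. Thus $U_I$ is precisely a transition front of the form described in Corollary~\ref{cor2}.

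The heart of the proof is the analysis of an arbitrary transition front $u$ as $t\to-\infty$, which I would carry out in three steps. \emph{(i) Almost-planar structure in each branch.} Since every branch $\mathcal{H}_i$ has uniformly bounded width, the restriction of a transition front to the far part of $\mathcal{H}_i$ is, uniformly in the cross-section, close to $0$, close to $1$, or close to a space-translate of the profile of an almost-planar front of \eqref{eq-extension}; this is the one-dimensionalising effect of bounded cross-sections together with the structure of transition fronts in such branches (as in \cite{BH1,BH2} and the estimates already needed for Theorems~\ref{Th1}--\ref{th2}). In particular one attaches to each pair $(t,i)$ an asymptotic state in $\{0,1\}$, and by \eqref{eq1.3} — both $\Omega_t^\pm$ reach arbitrarily far into the branches while the core is bounded — at every time at least one branch is asymptotically $1$ deep out and at least one is asymptotically $0$. \emph{(ii) No standing configuration.} If along some $t_n\to-\infty$ the interface $\Gamma_{t_n}$ stayed in a bounded set, then after extraction a limit of $u(\cdot+t_n,\cdot)$ would be a solution all of whose interfaces lie in the bounded core, i.e. essentially a stationary state $p$ with $0<p<1$; but $c^i_l,c^i_r>0$ rules out stationary almost-planar fronts inside the branches, so such a $p$ would be geometrically blocked, which is incompatible with the complete propagation of the $u_i$ by a comparison in the spirit of the proof of Corollary~\ref{cor1} (and of \cite{GHS}). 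Hence $\Gamma_t$ recedes into the branches as $t\to-\infty$. \emph{(iii) Identification.} The set $I$ of branches into which $\Gamma_t$ recedes is non-empty and proper, the asymptotic state is $1$ deep in $\mathcal{H}_i$ for $i\in I$ and $0$ elsewhere for all $t$ sufficiently negative, and in each $\mathcal{H}_i$, $i\in I$, the profile of $u(t,\cdot)$ converges — using the uniqueness in Assumption~\ref{assumption-l} and a compactness argument to fix the phase — to $u^i_l(t+\sigma_i,\cdot)$ for a constant $\sigma_i$, while $u(t,\cdot)\to0$ uniformly on $\overline{\Omega\setminus\bigcup_{i\in I}\mathcal{H}_i}$. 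This is exactly \eqref{frontlike}.

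Once \eqref{frontlike} is known, I would show $u$ coincides with $U_I$ up to a time shift. Both solutions have the same behaviour as $t\to-\infty$, so one can squeeze $u$ between the time-translates $U_I(\cdot+\tau_\pm,\cdot)$ near $t=-\infty$ (this is possible because each profile $u^i_l$ is monotone in its phase) and then push $\tau_+$ down to $\tau_-$ by the standard sliding argument based on the strong maximum principle and the uniform stability of $0$ and $1$ (as in \cite{BBC,CG} and the uniqueness part underlying Theorem~\ref{Th1}); this gives $u\equiv U_I(\cdot+\tau,\cdot)$ for some $\tau\in\R$, and in particular shows a posteriori that $u$ is time-increasing. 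Since $U_I$ propagates completely by the second preliminary observation, Corollary~\ref{cor2} applies to $u$ and yields that $u$ converges to $u^j_r$ in $\mathcal{H}_j$ for every $j\in J:=\{1,\dots,m\}\setminus I$ as $t\to+\infty$, with interfaces $(\Gamma_t)$ and sets $(\Omega_t^\pm)$ given by \eqref{eq+1.11}--\eqref{eq+1.12} and the limits \eqref{largetime}. This is the assertion of the theorem.

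The main obstacle is steps (ii)--(iii) of the $t\to-\infty$ analysis: it is there that the complete-propagation hypothesis is genuinely used, and turning the intuition ``$\Gamma_t$ recedes into a fixed set $I$ of branches'' into a proof — excluding interfaces that oscillate in and out of the core or spread unboundedly across several branches at once, and then pinning down a single constant phase $\sigma_i$ in each receding branch via the uniqueness of $u^i_l$ — requires the fine structure of transition fronts in bounded-width branches developed for Theorems~\ref{Th1}--\ref{th2}. A secondary difficulty is that the arbitrary transition front is not assumed time-monotone, so the matching with $U_I$ must be carried out purely by comparison and the monotonicity of $u$ is recovered only at the end.
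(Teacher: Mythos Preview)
Your overall architecture matches the paper's: establish the asymptotic form \eqref{frontlike} as $t\to-\infty$ for some proper $I$, identify $u$ with the corresponding entire solution by a comparison/stability argument, and then invoke Corollary~\ref{cor2}. The preliminary observations (positivity of all $c^i_r$ and $c^i_l$, and complete propagation of every $U_I$) are also what the paper records at the start of Section~4.

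The substantive divergence is in your steps (ii)--(iii), and there is a genuine gap. In step~(ii) you argue that if $\Gamma_{t_n}$ stayed bounded, a sequential limit $v=\lim u(\cdot+t_n,\cdot)$ would be ``essentially a stationary state $p$'': this inference is not justified. The limit $v$ is only an entire solution whose time-$0$ interfaces are bounded; nothing forces $v$ to be stationary, nor its interfaces to stay bounded for all $t$, so the contradiction with complete propagation does not follow from the mechanism you sketch. The paper bypasses compactness here and proves directly (Lemma~\ref{lemma4.3}) that $\Omega\cap B(0,L+\rho)\subset\Omega_t^-$ for all sufficiently negative $t$. The engine is an explicit \emph{two-front subsolution} (Lemma~\ref{lemma-s2}): whenever $u(t_k,\cdot)\ge1-\delta$ on a long segment $\{L_i-R\le x\cdot e_i\le L_i+R\}$ deep in a branch $\mathcal{H}_i$, one slides underneath $u$ the combination $u_i(t+\tau_1,\cdot)+\widetilde{u}_i(t+\tau_2,\cdot)-1-\delta e^{-\delta t}$, where $\widetilde{u}_i$ is the entire solution emanating from all the \emph{other} branches. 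Complete propagation of every $u_j$ (hence of $\widetilde{u}_i$, by your own second preliminary observation) makes this subsolution spread both inward and outward along $\mathcal{H}_i$, forcing $u$ to be close to $1$ near the core at times still tending to $-\infty$ --- the required contradiction. This is precisely where the complete-propagation hypothesis is consumed, and your comparison ``in the spirit of Corollary~\ref{cor1}'' does not capture it.

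The same two-front tool, together with a one-sided ``step'' supersolution (Lemma~\ref{lemma-s1}), then gives quantitative control $|\xi_{i,t,1}+c^i_l t|\le C$ on each first interface (Lemmas~\ref{lemma-xige}--\ref{lemma-xile}) and a sandwich $u_i(t+\sigma_i,\cdot)\le u(t,\cdot)\le u_{I,\tau_i}(t+\eta,\cdot)$ (Lemma~\ref{trap}). This replaces your compactness-based phase identification in (iii) and feeds directly into a global comparison with explicit perturbed time-shifts $u_{I,m_i}(t\pm\hat\varepsilon\omega(1-e^{-\delta t}),\cdot)\pm\hat\varepsilon\delta e^{-\delta t}$, yielding $u\equiv u_{I,m_i}$ without a sliding argument in the classical sense. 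In short: your strategy is correct, but the missing ingredient is the construction of Lemma~\ref{lemma-s2} (and its companion Lemma~\ref{lemma-s1}); without it, steps (ii)--(iii) as written do not close.
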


Notice that Theorem~\ref{th3} does not hold in general, without the assumption that every entire solution $u_i$ of \eqref{front} propagates completely, see the counter-example in Remark~1.10 of \cite{GHS}.

We organize this paper as following. In Section~2, we prove the existence of entire solutions emanating from almost-planar fronts in some branches, that is, Theorem~\ref{Th1}. We also show Corollary~\ref{cor1} in this section. Section~3 is devoted to proving Theorem~\ref{th2} and Corollary~\ref{cor2} which indicate the large time behavior of the entire solution of Theorem~\ref{Th1}. In Section~4, we prove the uniqueness of the transition front connecting $0$ and $1$, that is, Theorem~\ref{th3}. Finally, we give some examples in Section~5, to which our results can be applied.

\section{Existence  of entire solutions}

In this section, we only prove the existence of an entire solution emanating from one almost-planar front in one branch. The existence of entire solutions emanating from some almost-planar fronts in some branches, that is, Theorem~\ref{Th1}, can be proved in a similar way. Indeed, from the following constructions of sub- and supersolutions, one knows that the constructions do not depend on the geometrical structure of the domain beyond the initiated branch. It means that one can construct sub- and supersolutions for some branches by simply combining the sub- and supersolutions in each branch of these branches. We assume without loss of generality that the branch $\mathcal{H}_i$ ($i$ could be any integer of $1,\cdots,m$) is the initiated branch, in which the entire solution emanating. By Assumption~\ref{assumption-l}, one knows that for the extension $\widetilde{\mathcal{H}}_i$ of the branch $\mathcal{H}_i$, there is an almost-planar front $u^i_l(t,x)$ connecting $0$ and $1$ facing direction $-e_i$ of the equation \eqref{eq-extension}. Assume that $c^i_l>0$. In the sequel, we are going to prove that there is an entire solution $u(t,x)$ such that
\be\label{frontlike-2.1}
\left\{\baa{rcll}
u(t,x)\!-\!u^i_l(t,x) & \!\!\!\to\!\!\! & 0\!\! & \text{uniformly in $\overline{\mathcal{H}_i}\cap\overline{\Omega}\,$},\vspace{3pt}\\
u(t,x) & \!\!\to\!\! & 0\!\! & \displaystyle\text{uniformly in $\overline{\Omega\setminus \mathcal{H}_i}$},
\eaa\right.
\ee
as $t\rightarrow -\infty$.

\subsection{Construction of sub- and supersolutions}
We first need an auxiliary lemma which will be used frequently.

\begin{lemma}\label{lemma-psi}
For any $i\in\{1,\cdots,m\}$, assume that $\widetilde{\mathcal{H}}_i$ is a smooth extension of $\mathcal{H}_i$ and $\widetilde{A}(x)$, $\widetilde{q}(x)$, $\widetilde{f}(x,u)$ are redefined in $\widetilde{\mathcal{H}}_i$ such that \eqref{wA} holds. Then, for any $\beta>0$, there exist $\lambda_i>0$ and a positive $C^2$ function $\psi_i(x)$ satisfying
\begin{eqnarray}\label{psi}
\left\{\begin{array}{lll}
&-$div$(\widetilde{A}(x) \nabla \psi_i) +\lambda_i ($div$(\widetilde{A}(x) e_i \psi_i )+e_i\widetilde{A}\nabla \psi_i) +\widetilde{q}(x)\cdot \nabla\psi_i&\\
&-\lambda_i \widetilde{q}(x) e_i \psi_i -\lambda_i^2 (e_i\widetilde{A}(x) e_i) \psi_i \ge -\beta \psi_i, & x\in \widetilde{\mathcal{H}}_i,\\
&\nu \widetilde{A}(x)(\lambda_i \psi_i(x) e_i +\nabla \psi_i)\ge 0, &  x\in\partial\widetilde{\mathcal{H}}_i,
\end{array}
\right.
\end{eqnarray}
such that $0<\inf_{\widetilde{\mathcal{H}}_i} \psi_i(x)\le \sup_{\widetilde{\mathcal{H}}_i} \psi_i(x)<+\infty$.
\end{lemma}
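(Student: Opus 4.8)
The plan is to look for $\psi_i$ as the principal eigenfunction of an auxiliary elliptic problem on a large but bounded truncation of the extended branch $\widetilde{\mathcal{H}}_i$, and then pass to the limit. Concretely, observe that the left-hand side of \eqref{psi} is exactly the operator one obtains by plugging the ansatz $v(t,x)=e^{\lambda_i(x\cdot e_i-ct)}\psi_i(x)$ into the linearized equation $v_t-\mathrm{div}(\widetilde A\nabla v)+\widetilde q\cdot\nabla v=0$ in $\widetilde{\mathcal H}_i$ with the Neumann-type boundary condition $\nu\widetilde A\nabla v=0$: the condition $\nu\widetilde A(\lambda_i\psi_i e_i+\nabla\psi_i)\ge0$ on $\partial\widetilde{\mathcal H}_i$ is precisely $\nu\widetilde A\nabla v\ge0$ for that ansatz. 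So the statement is a "generalized principal eigenvalue is positive for small $\lambda$" fact. First I would fix $\beta>0$ and, for $R>0$, consider the truncated domain $\widetilde{\mathcal H}_i^R=\widetilde{\mathcal H}_i\cap\{|x\cdot e_i|<R\}$, and the principal eigenvalue $\mu_R(\lambda)$ of the operator
\[
\mathcal L_\lambda\psi:=-\mathrm{div}(\widetilde A\nabla\psi)+\lambda\big(\mathrm{div}(\widetilde A e_i\psi)+e_i\widetilde A\nabla\psi\big)+\widetilde q\cdot\nabla\psi-\lambda\widetilde q e_i\psi-\lambda^2(e_i\widetilde A e_i)\psi
\]
on $\widetilde{\mathcal H}_i^R$ with the boundary condition $\nu\widetilde A(\lambda\psi e_i+\nabla\psi)=0$ on $\partial\widetilde{\mathcal H}_i\cap\{|x\cdot e_i|<R\}$ and, say, Dirichlet conditions on the two caps $\{x\cdot e_i=\pm R\}$, together with a positive eigenfunction $\psi_{R,\lambda}>0$. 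By Krein–Rutman (or the Donsker–Varadhan min-max characterization) $\mu_R(\lambda)$ exists, is simple, and $\psi_{R,\lambda}$ can be chosen positive in the interior.

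The key step is the continuity and monotonicity of $\mu_R(\lambda)$ at $\lambda=0$. At $\lambda=0$ the operator is $\psi\mapsto-\mathrm{div}(\widetilde A\nabla\psi)+\widetilde q\cdot\nabla\psi$ with the pure conormal Neumann condition $\nu\widetilde A\nabla\psi=0$; this operator annihilates no function in general (because of the drift $\widetilde q$), but its principal eigenvalue on $\widetilde{\mathcal H}_i^R$ — call it $\mu_R(0)$ — is bounded: using the comparison with the constant test function and the uniform ellipticity \eqref{eq-A} plus boundedness of $\widetilde q$, one gets $|\mu_R(0)|\le C_0$ for a constant $C_0$ depending only on $\|\widetilde q\|_\infty$, $\beta_1$, $\beta_2$, and — crucially, by the uniform width bound on $\widetilde\omega_i$ — \emph{not} on $R$. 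In fact I would show $\mu_R(0)\to\mu_\infty(0)$ as $R\to\infty$, and then, since the terms in $\mathcal L_\lambda-\mathcal L_0$ are linear/quadratic in $\lambda$ with coefficients bounded uniformly in $x$ and $R$, standard perturbation estimates give $|\mu_R(\lambda)-\mu_R(0)|\le C_1(|\lambda|+\lambda^2)$ uniformly in $R$. Hence there is $\lambda_i>0$, independent of $R$, such that $\mu_R(\lambda_i)\ge\mu_R(0)-\beta/2\ge -\beta$ — here one may need to first note that, if necessary, one can subtract a suitable constant multiple of $\beta$; the point is that for all large $R$ one has $\mu_R(\lambda_i)\ge-\beta$, so $\mathcal L_{\lambda_i}\psi_{R,\lambda_i}=\mu_R(\lambda_i)\psi_{R,\lambda_i}\ge-\beta\psi_{R,\lambda_i}$, i.e. $\psi_{R,\lambda_i}$ satisfies the differential inequality in \eqref{psi} on $\widetilde{\mathcal H}_i^R$ with equality in the boundary condition.

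Finally I would pass to the limit $R\to\infty$. Normalizing $\psi_{R,\lambda_i}(x^*)=1$ at a fixed interior point $x^*$ (independent of $R$), Harnack's inequality for the uniformly elliptic operator $\mathcal L_{\lambda_i}$, applied along a chain of balls marching down the branch — which is possible precisely because the branch has uniformly bounded width (from $\sup_{y\in\widetilde\omega_i(s)}|y|<\infty$) so the geometry is uniformly "thick" and the Harnack chains have uniformly bounded length per unit progress in $x\cdot e_i$ — gives \emph{two-sided} bounds $c\le\psi_{R,\lambda_i}(x)\le C$ on any fixed compact set, with $c,C$ independent of $R$, and moreover with $c,C$ uniform over all of $\widetilde{\mathcal H}_i$ once $R$ is large. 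Interior and boundary $C^{2,\alpha}$ Schauder estimates then give local $C^{2,\alpha}$ bounds, and a diagonal extraction yields $\psi_{R_n,\lambda_i}\to\psi_i$ in $C^2_{loc}(\overline{\widetilde{\mathcal H}_i})$, with $\psi_i>0$ solving the differential inequality in $\widetilde{\mathcal H}_i$ and the boundary inequality (with equality) on $\partial\widetilde{\mathcal H}_i$, and satisfying $0<\inf\psi_i\le\sup\psi_i<\infty$ by the uniform Harnack bounds. I expect the main obstacle to be making the bound on $\mu_R(0)$ (and the Harnack constants) genuinely uniform in $R$: this is where the hypothesis that $|\widetilde\omega_i(s)|$ is uniformly bounded — equivalently that the extended branch has bounded cross-sections with a uniform interior-cone / corkscrew property from smoothness — has to be used to control the Poincaré-type and Harnack constants independently of how far down the branch one looks; once that uniformity is in hand, the eigenvalue perturbation and the compactness/limit arguments are routine.
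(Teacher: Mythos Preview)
Your eigenvalue/Harnack approach is dramatically more elaborate than what is needed, and it contains a real gap. The paper's proof is completely elementary: take any bounded positive $C^2$ function $\widetilde\psi_i$ on $\overline{\widetilde{\mathcal H}_i}$ with bounded first and second derivatives and with $\partial_\nu\widetilde\psi_i\ge 1$ on $\partial\widetilde{\mathcal H}_i$ (such a function exists from a regularized distance function), set $\psi_i=\widetilde\psi_i+C$ for a large constant $C$, and then choose $\lambda_i>0$ small. Since $\nabla\psi_i=\nabla\widetilde\psi_i$ and $\nabla^2\psi_i=\nabla^2\widetilde\psi_i$ are bounded while $\psi_i\ge C$, every term on the left of \eqref{psi} divided by $\psi_i$ is either $O(1/C)$ or $O(\lambda_i)$ or $O(\lambda_i^2)$, so for $C$ large and $\lambda_i$ small the differential inequality holds with the given $\beta$. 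The boundary inequality comes from $\lambda_i\le 1/(\|\widetilde\psi_i\|_\infty+C)\le \partial_\nu\widetilde\psi_i/\psi_i$. The bounds $0<\inf\psi_i\le\sup\psi_i<\infty$ are then trivial because $\psi_i$ is a bounded function plus a constant.

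The gap in your argument is the claim that chaining Harnack's inequality along the branch yields \emph{uniform} two-sided bounds $c\le\psi_{R,\lambda_i}\le C$ over all of $\widetilde{\mathcal H}_i$. Uniform width only tells you the number of Harnack balls needed to reach a point at level $x\cdot e_i=s$ is $O(|s|)$; the Harnack constant therefore degrades like $C_H^{O(|s|)}$, i.e.\ exponentially in $|s|$, not uniformly. A positive eigenfunction of $\mathcal L_{\lambda_i}$ on an infinite cylinder with a drift term can perfectly well grow or decay exponentially along $e_i$ (think of $-\psi''+q_0\psi'=\mu\psi$ on $\R$), so there is no reason the limit $\psi_i$ obtained by your compactness argument should satisfy $0<\inf\psi_i\le\sup\psi_i<\infty$. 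Relatedly, your bound on $\mu_R(0)$ ``using the constant test function'' does not work as written, since constants do not satisfy the Dirichlet condition you impose on the caps. These issues are not fatal to an eigenvalue-based approach in principle, but fixing them would require substantially more work; the paper's direct construction avoids all of it.
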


\begin{proof}
Take a positive bounded $C^2$ function $\widetilde{\psi}_i(x)$ such that $\partial_{\nu}\widetilde{\psi}_i\ge 1$ for $x\in\partial\widetilde{\mathcal{H}}_i$
and all of its derivatives $\nabla\widetilde{\psi}_i$, $\nabla^2\widetilde{\psi}_i$ are bounded in the sense of $L^{\infty}$ norm. One can apply the classical distance function in \cite{GT} to get a such function. Let $\psi_i(x)=\widetilde{\psi}_i(x)+C$ where $C$ is a positive constant. Notice that $\nabla \psi_i=\nabla\widetilde{\psi}_i$ and $\nabla^2\psi_i=\nabla^2\widetilde{\psi}_i$. For any $\beta>0$, one can take $C$ sufficiently large and $0<\lambda_i\le 1/(\|\widetilde{\psi}_i\|_{L^{\infty}}+C)$ sufficiently small such that
\be\label{ineq}
\frac{1}{\psi_i(x)}\Big(-div(\widetilde{A}(x) \nabla \psi_i) +\lambda_i (div(\widetilde{A}(x) e_i \psi_i )+e_i\widetilde{A}\nabla \psi_i) +\widetilde{q}(x)\cdot \nabla\psi_i\Big)-\lambda_i \|\widetilde{q}(x) \| -\lambda_i^2 \beta_2 +\beta \ge 0,
\ee
for all $x\in\widetilde{\mathcal{H}}_i$ where $\|q(x)\|=\sum_{1\le i\le N} \|q_i(x)\|_{L^{\infty}}$ and $\beta_2$ is defined in \eqref{eq-A}. Then, the first inequality of \eqref{psi} holds by \eqref{ineq} and the second inequality of \eqref{psi} holds by
$$\lambda_i\le 1/(\|\widetilde{\psi}_i\|_{L^{\infty}}+C)\le (\nu\cdot \nabla\psi_i)/\psi_i=(\nu\cdot \nabla\widetilde{\psi}_i)/\psi_i.$$
From above, it is obvious that $0<\inf_{\widetilde{\mathcal{H}}_i}\psi_i(x)\le \sup_{\widetilde{\mathcal{H}}_i}\psi_i(x)<+\infty$.
\end{proof}
\vskip 0.3cm

Moreover, we need the time-monotonicity property of $u^i_l(t,x)$.

\begin{lemma}\label{muil}
Let Assumptions~\ref{assumption-l} and~\ref{assumption-limiting} hold. If $c^i_l>0$, then $(u^i_l)_t(t,x)>0$ for all $t\in \R$ and $x\in\widetilde{H}_i$ and for any positive constant $D$, there exist $T_1<0$ and $k>0$ such that $(u^i_l)_t(t,x)\ge k$ for $t\le T_1$ and $x\in\overline{\widetilde{H}_i}$ such that $|x\cdot e_i+c^i_l t|\le D$.
\end{lemma}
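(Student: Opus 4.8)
The plan is to establish strict positivity of $(u^i_l)_t$ by the standard strong maximum principle / sliding argument for transition fronts, and then upgrade this to a uniform lower bound near the interface by a compactness argument using Assumption~\ref{assumption-limiting}. First I would differentiate \eqref{eq-extension} in $t$: since the coefficients $\widetilde A$, $\widetilde q$, $\widetilde f$ are time-independent and smooth enough, $w:=(u^i_l)_t$ solves the linearized parabolic equation $w_t-\mathrm{div}(\widetilde A\nabla w)+\widetilde q\cdot\nabla w=\widetilde f_u(x,u^i_l)\,w$ in $\R\times\widetilde{\mathcal H}_i$ with the Neumann condition $\nu\widetilde A\nabla w=0$ on the boundary (parabolic regularity gives that $u^i_l$ is $C^{2,\alpha}$ in $x$ and $C^{1}$ in $t$, so this differentiation is legitimate). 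To see that $w\not\equiv 0$ and in fact $w\ge 0$, I would use that $u^i_l$ is an almost-planar front facing $-e_i$ with interface $\Gamma_t=\{x\cdot e_i=-c^i_lt\}$ and $c^i_l>0$: by the definition of a transition front, for any fixed $x$ the value $u^i_l(t,x)\to 1$ as $t\to+\infty$ (since $x$ eventually lies deep in $\Omega^+_t$) and $u^i_l(t,x)\to 0$ as $t\to-\infty$. A sliding argument in $t$ (compare $u^i_l(t+\tau,x)$ with $u^i_l(t,x)$ for $\tau>0$, using the uniqueness up to time shifts together with the limiting behavior at $t\to\pm\infty$ to start the sliding, exactly as in the classical monotonicity proofs for bistable fronts) yields $u^i_l(t+\tau,x)\ge u^i_l(t,x)$ for all $\tau>0$, hence $w\ge 0$; since $w$ is a nonnegative solution of a linear parabolic equation and is not identically zero, the strong maximum principle (with the Hopf lemma handling the Neumann boundary) gives $w=(u^i_l)_t>0$ everywhere in $\R\times\widetilde{\mathcal H}_i$.

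For the second, quantitative assertion, fix $D>0$ and suppose for contradiction that there is no such pair $(T_1,k)$: then there are sequences $t_n\to-\infty$ and $x_n\in\overline{\widetilde{\mathcal H}_i}$ with $|x_n\cdot e_i+c^i_lt_n|\le D$ and $(u^i_l)_t(t_n,x_n)\to 0$. Write $x_n=s_n e_i+y_n$ with $s_n=x_n\cdot e_i$; then $s_n\to+\infty$ (because $s_n\approx -c^i_lt_n$ and $c^i_l>0$), while $y_n$ stays in the bounded family $\widetilde\omega_i$. After extracting a subsequence I would recenter: set $\tilde t=t-t_n$ and $\tilde x=x-s_ne_i$ (choosing the shift along $e_i$ so that the interface is brought back near $\tilde x\cdot e_i=0$), and pass to the limit. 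This is exactly the setting of Assumption~\ref{assumption-limiting}: the shifted domains $\widetilde{\mathcal H}_i-s_ne_i$ converge locally uniformly to an infinite cylinder $\mathcal H_i^\infty$ parallel to $e_i$, and the shifted coefficients converge locally uniformly to $A_\infty,q_\infty,f_\infty$. By parabolic estimates the shifted functions $u^i_l(\tilde t+t_n,\tilde x+s_ne_i)$ converge (up to a further subsequence, in $C^{1,2}_{loc}$) to an entire solution $u_\infty$ of the limiting equation; using the uniform-in-$\varepsilon$ bounds \eqref{eq1.7} in the definition of transition front one checks that $u_\infty$ is again an almost-planar front facing $-e_i$ with speed $c^i_l$, which by Assumption~\ref{assumption-limiting} coincides with $v_l$ up to a time shift. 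The first part of this lemma, applied to $v_l$, gives $(v_l)_t>0$, hence $(u_\infty)_t(\,\cdot\,,\cdot)>0$ at the limiting point $(\bar t,\bar x)$ (the limit of $(0,y_n)$, lying at geodesic distance $\le D$ from the limiting interface, so it is a genuine interior point of $\R\times\mathcal H_i^\infty$). But $(u^i_l)_t(t_n,x_n)\to(u_\infty)_t(\bar t,\bar x)>0$ contradicts $(u^i_l)_t(t_n,x_n)\to 0$. This contradiction proves the existence of $T_1$ and $k$.

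The main obstacle is the second step, and specifically two points inside it: (i) justifying that the local limit $u_\infty$ is \emph{not degenerate} — i.e. that it is still a transition front connecting $0$ and $1$ and not, say, the trivial solution $0$ or $1$ — which is where one must carefully use the uniform modulus $M_\varepsilon$ from \eqref{eq1.7} together with the constraint $|x_n\cdot e_i+c^i_lt_n|\le D$ to keep the interface at bounded distance after recentering; and (ii) invoking the uniqueness clause of Assumption~\ref{assumption-limiting} to pin down $u_\infty=v_l(\cdot+\tau,\cdot)$, so that the strict positivity $(v_l)_t>0$ from the first part transfers. The first step (strict positivity via sliding and strong maximum principle) is essentially standard once the sliding can be initiated, and the initiation is guaranteed by the pointwise limits $u^i_l(t,x)\to 0,1$ as $t\to\mp\infty$ together with the uniqueness up to time shift in Assumption~\ref{assumption-l}.
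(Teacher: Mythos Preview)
Your proposal is correct and follows essentially the same approach as the paper: strict positivity via monotonicity of invasions, then a compactness/recentering argument using Assumption~\ref{assumption-limiting} to rule out $(u^i_l)_t\to 0$ along the interface. The only differences are cosmetic: the paper cites \cite[Theorem~1.11]{BH1} for the monotonicity rather than sketching the sliding argument, and because it fixes an arbitrary $T_1$ first it must also dispose of the bounded subcase $t_n\to t_*\in(-\infty,T_1]$ (via the first part plus the Hopf lemma on the boundary), whereas your direct negation of the existential forces $t_n\to -\infty$ and bypasses that case.
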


\begin{proof}
Since $c^i_l>0$, one can easily verify that $u^i_l(t,x)$ is an invasion of $0$ by $1$ in the sense of Definition~1.4 of \cite{BH1}. Then, by Theorem~1.11 of \cite{BH1}, one has that $u^i_l(t,x)$ is increasing in time $t$, that is, $(u^i_l)_t(t,x)>0$.

Denote, up to rotation,
$$\widetilde{\mathcal{H}}_i:=\{(x_1,x')\in\R^N; x' \in \widetilde{\omega}_i(x_1)\},$$
where $\widetilde{\mathcal{H}}_i=\mathcal{H}_i$ for $x_1>0$. Notice that
\be\label{disO}
d_{\Omega}(x,y)\ge |x-y| \hbox{ and } \frac{d_{\Omega}(x,y)}{|x-y|}<+\infty \hbox{ for any $x$, $y\in\widetilde{\mathcal{H}}_i$}
\ee
since $|\widetilde{\omega}_i(s)|$ is uniformly bounded for $s\in\R$.
By \cite[Theorem~1.2]{BH1}, we have that for any positive constant $D$, there is $\eta\in(0,1/2]$ such that $\eta\le u^i_l(t,x)\le 1-\eta$ for any $t\in \R$ and $x\in\overline{\widetilde{H}_i}$ such that $|x_1+c^i_l t|\le D$. Take any negative constant $T_1$. Now assume by contradiction that there exist sequences $\{t_n\}_{n\in\mathbb{N}}$ of $(-\infty,T_1]$ and $\{x_n\}_{n\in \mathbb{N}}=\{(x_{n1},x_n')\}_{n\in\mathbb{N}}$ satisfying $|x_{n1}+c^i_l t_n|\le D$ such that $(u^i_l)_t(t_n,x_n)\rightarrow 0$. Then, either $t_n\rightarrow t_*\in (-\infty,T_1]$ or $t_n\rightarrow -\infty$ as $n\rightarrow +\infty$. For the former case, there is $x_*\in\overline{\widetilde{\mathcal{H}}_i}$ such that $x_n\rightarrow x_*$ as $n\rightarrow +\infty$. So, $(u^i_l)_t(t_*,x_*)=0$. If $x_*\in \widetilde{\mathcal{H}}_i$, it contradicts $(u^i_l)_t>0$ for all $t\in\R$ and $x\in\overline{\widetilde{\mathcal{H}}_i}$. Then, $x_*\in \partial\widetilde{\mathcal{H}}_i$ and the Hopf lemma implies that $\partial_{\nu}(u^i_l)_t(t_*,x_*)<0$ which contradicts the boundary condition.

Then, if $t_n\rightarrow -\infty$ as $n\rightarrow +\infty$, $x_{n1}$ converges to $+\infty$ as $n\rightarrow +\infty$. Since $|\omega_i(s)|$ is bounded uniformly for $s\in \R$, there is $x'_*\in \R^{N-1}$ such that $x'_{n}\rightarrow x'_*$ as $n\rightarrow +\infty$. Let $\mathcal{H}_i^n=\widetilde{\mathcal{H}}_i-x_{n1} e_i$, $A_n(x)=\widetilde{A}(x_1+x_{n1},x')$, $q_n(x)=\widetilde{q}(x_1+x_{n1},x')$ and $f_n(x,\cdot)=\widetilde{f}(x_1+x_{n1},x',\cdot)$. Then, by Assumption~\ref{assumption-limiting}, there is an infinite cylinder $\mathcal{H}_i^{\infty}$ parallel to $e_i$ such that $\mathcal{H}_i^n$ converge locally uniformly to $\mathcal{H}_i^{\infty}$ and there are $A_{\infty}(x)$, $q_{\infty}(x)$, $f_{\infty}(x,\cdot)$ satisfying \eqref{wA} such that $A_n\rightarrow A_{\infty}$, $q_n\rightarrow q_{\infty}$, $f_n\rightarrow f_{\infty}$ locally uniformly in $\mathcal{H}_i^{\infty}$. Let $u_n(t,x)=u^i_l(t+t_n,x_1+x_{n1},x')$. Then, $(0,x'_n)\in \mathcal{H}_i^n$ and $(u_n)_t(0,0,x_n')\rightarrow 0$ as $n\rightarrow +\infty$. Since $u^i_l(t,x)$ is an almost-planar by Assumption~\ref{assumption-l}, it follows from Definition~\ref{TF} and \eqref{disO} that for any $\varepsilon>0$, there is $M_{\varepsilon}>0$ such that
\begin{eqnarray*}
\left\{\begin{array}{lll}
u^i_l(t,x)\ge 1-\varepsilon, && \hbox{ for $t\in\R$ and $x\in\widetilde{\mathcal{H}}_i$ such that $x_1+c^i_l t\ge M_{\varepsilon}$},\\
u^i_l(t,x)\le \varepsilon, && \hbox{ for $t\in\R$ and $x\in\widetilde{\mathcal{H}}_i$ such that $x_1+c^i_l t\le -M_{\varepsilon}$}.
\end{array}
\right.
\end{eqnarray*}
Then, by $|x_{n1}+c^i_l t_n|\le D$, one can easily check that
\begin{eqnarray}\label{MD}
\left\{\begin{array}{lll}
u_n(t,x)\ge 1-\varepsilon, && \hbox{ for $t\in\R$ and $x\in\mathcal{H}^n_i$ such that $x_1+c^i_l t\ge M_{\varepsilon}+D$},\\
u_n(t,x)\le \varepsilon, && \hbox{ for $t\in\R$ and $x\in\mathcal{H}^n_i$ such that $x_1+c^i_l t\le -M_{\varepsilon}-D$}.
\end{array}
\right.
\end{eqnarray}
It means that $u_n(t,x)$ is an almost-planar front facing $-e_i$ with speed $c^i_l$ for all $n$. By parabolic estimates, $u_n(t,x)$ converge, up to extraction of a subsequence, to a solution $u_*(t,x)$ of
\begin{eqnarray*}
\left\{\begin{array}{lll}
&u_t-$div$(A_{\infty}(x)\nabla u)+q_{\infty}(x)\cdot \nabla u=f_{\infty}(x,u), \quad &t\in\R,\ x\in\mathcal{H}_i^{\infty},\\
&\nu A_{\infty}(x)\nabla u=0,& t\in\R,\ x\in\partial\mathcal{H}_i^{\infty}.
\end{array}
\right.
\end{eqnarray*}
Then, $u_*(t,x)=v_l(t,x)$ up to shifts by Assumption~\ref{assumption-limiting} and $v_l(t,x)$ is an almost-planar front facing $-e_i$ with speed $c^i_l$ by \eqref{MD}. Since $c^i_l>0$, one has that $(v_l)_t>0$. However, $(0,x'_n)\rightarrow (0,x'_*)\in \mathcal{H}_i^{\infty}$ as $n\rightarrow +\infty$ and $v_t(0,0,x'_*)=(u_*)_t(0,0,x'_*)=0$ which is a contradiction.
This completes the proof.
\end{proof}

\vskip 0.3cm

Similarly, one can get the following lemma for $u^i_r(t,x)$.

\begin{lemma}\label{muir}
Let Assumptions~\ref{assumption-r} and~\ref{assumption-limiting} hold. If $c^i_r>0$, then $(u^i_r)_t(t,x)>0$ for all $t\in \R$ and $x\in\widetilde{H}_i$ and for any positive constant $D$, there exist $T_2>0$ and $k>0$ such that $(u^i_r)_t(t,x)\ge k$ for $t\ge T_2$ and $x\in\overline{\widetilde{H}_i}$ such that $|x\cdot e_i-c^i_r t|\le D$.
\end{lemma}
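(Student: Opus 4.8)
The plan is to run the proof of Lemma~\ref{muil} with the roles of $-\infty$ and $+\infty$, and of the directions $-e_i$ and $e_i$, interchanged. Since $c^i_r>0$, the almost-planar front $u^i_r(t,x)$ facing direction $e_i$ is an invasion of $0$ by $1$ in the sense of Definition~1.4 of \cite{BH1}: it is close to $1$ in $\{x\in\widetilde{\mathcal{H}}_i:\ x\cdot e_i-c^i_r t\le -M_\varepsilon\}$, close to $0$ in $\{x\cdot e_i-c^i_r t\ge M_\varepsilon\}$, and its interface $\{x\cdot e_i=c^i_r t\}$ moves with positive speed $c^i_r$. Theorem~1.11 of \cite{BH1} then yields $(u^i_r)_t(t,x)>0$ for all $t\in\R$ and $x\in\widetilde{\mathcal{H}}_i$, which is the first assertion.

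For the uniform lower bound I would fix an arbitrary $T_2>0$ and argue by contradiction, assuming there are sequences $t_n\ge T_2$ and $x_n=(x_{n1},x_n')$ (in coordinates with $x_1=x\cdot e_i$) with $|x_{n1}-c^i_r t_n|\le D$ and $(u^i_r)_t(t_n,x_n)\to 0$. Up to a subsequence, either $t_n\to t_*\in[T_2,+\infty)$ or $t_n\to+\infty$. In the first case $x_{n1}$ is bounded, hence $x_n\to x_*\in\overline{\widetilde{\mathcal{H}}_i}$ and $(u^i_r)_t(t_*,x_*)=0$; if $x_*\in\widetilde{\mathcal{H}}_i$ this contradicts the interior positivity just established, while if $x_*\in\partial\widetilde{\mathcal{H}}_i$ then, $(u^i_r)_t$ being a nonnegative solution of the linearized parabolic equation, the Hopf lemma forces $\partial_\nu(u^i_r)_t(t_*,x_*)<0$, contradicting $\nu\widetilde{A}\nabla(u^i_r)_t=0$. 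In the second case $|x_{n1}-c^i_r t_n|\le D$ and $c^i_r>0$ force $x_{n1}\to+\infty$.

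It remains to treat $t_n\to+\infty$, hence $x_{n1}\to+\infty$. Here I would translate: set $\mathcal{H}_i^n=\widetilde{\mathcal{H}}_i-x_{n1}e_i$, $A_n(x)=\widetilde{A}(x+x_{n1}e_i)$, $q_n(x)=\widetilde{q}(x+x_{n1}e_i)$, $f_n(x,\cdot)=\widetilde{f}(x+x_{n1}e_i,\cdot)$ and $u_n(t,x)=u^i_r(t+t_n,x+x_{n1}e_i)$; since $\sup_{y\in\widetilde{\omega}_i(s)}|y|<\infty$, extract $x_n'\to x_*'$. By Assumption~\ref{assumption-limiting} the domains $\mathcal{H}_i^n$ converge locally uniformly to an infinite cylinder $\mathcal{H}_i^\infty$ parallel to $e_i$ and $(A_n,q_n,f_n)\to(A_\infty,q_\infty,f_\infty)$ locally uniformly, the limit satisfying \eqref{wA}. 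Because $|x_{n1}-c^i_r t_n|\le D$, the almost-planar-front estimates for $u^i_r$ translate into the analogue for $u_n$ of the estimate \eqref{MD} in the proof of Lemma~\ref{muil} (with $x\cdot e_i+c^i_l t$ replaced by $x\cdot e_i-c^i_r t$), so each $u_n$ is an almost-planar front facing $e_i$ with interface within $D$ of $\{x\cdot e_i=c^i_r t\}$. Parabolic estimates then give a subsequential local-uniform limit $u_*$ solving the limiting equation that is again an almost-planar front facing $e_i$ with speed $c^i_r>0$; by the uniqueness in Assumption~\ref{assumption-limiting}, $u_*=v_r(\cdot+\tau,\cdot)$ for some $\tau\in\R$, so $v_r$ has speed $c^i_r>0$ and hence $(v_r)_t>0$. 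But $(0,x_*')\in\mathcal{H}_i^\infty$ and $(u_*)_t(0,0,x_*')=\lim_n(u^i_r)_t(t_n,x_n)=0$, a contradiction. This produces $k=k(D,T_2)>0$ with the stated property.

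\textbf{Main obstacle.} As in Lemma~\ref{muil}, the delicate point is precisely the passage to the limiting equation when $t_n\to+\infty$: one must check that the translated solutions $u_n$ are genuine almost-planar fronts for the varying equations, \emph{uniformly in $n$}, with the interface pinned near $\{x\cdot e_i=c^i_r t\}$, so that the local-uniform limit $u_*$ is a transition front with the sets $\Gamma_t$, $\Omega^\pm_t$ required by Assumption~\ref{assumption-limiting} and the uniqueness there may legitimately be invoked. Once the bookkeeping behind \eqref{MD} is transcribed with the obvious sign changes, everything else is routine and identical to the case already treated.
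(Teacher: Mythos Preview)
Your proposal is correct and follows exactly the approach the paper intends: the paper does not give a separate proof of Lemma~\ref{muir} but simply states that it is obtained in the same way as Lemma~\ref{muil}, and your argument is precisely the transcription of that proof with $t\to+\infty$, $e_i$, $c^i_r$ and $v_r$ in place of $t\to-\infty$, $-e_i$, $c^i_l$ and $v_l$. The one step you omit, the appeal to \cite[Theorem~1.2]{BH1} to pin the values of $u^i_r$ between $\eta$ and $1-\eta$ on the strip $|x\cdot e_i-c^i_r t|\le D$, is not actually used in the contradiction argument and can safely be dropped.
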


We then announce some parameters. Remember that $\mathcal{H}_i$ is the initiated branch and $i$ is a fixed integer of $1,\cdots,m$. For convenience, define
$$\mathcal{H}_i(R):=\{x\in\mathcal{H}_i; x\cdot e_i\ge R\},$$
for any $R>0$. By Lemma~\ref{lemma-psi}, there exist $\lambda_i>0$ and $\psi_i(x)>0$ such that \eqref{psi} holds for the extension $\widetilde{\mathcal{H}}_i$ of $\mathcal{H}_i$ and $\beta=\gamma$ where $\gamma$ is defined by \eqref{eq-F}.  Notice that $v(x):=e^{-\lambda_i(x\cdot e_i-L)} \psi_i(x)$ satisfies
\begin{eqnarray}\label{eq-v}
\left\{\begin{array}{lll}
&-$div$(A(x)\nabla v)+q(x)\cdot \nabla v\ge -\gamma v, \quad & x\in \mathcal{H}_i(L),\\
&\nu A(x)\nabla v\ge 0,& x\in\partial\mathcal{H}_i(L),
\end{array}
\right.
\end{eqnarray}
where $L$ is defined by \eqref{branches}.
Let $\delta>0$ be a constant such that
$$\delta\le \min\Big(\frac{\sigma}{2},\lambda_i c^i_l\Big) \hbox{ where $\sigma$ is defined in \eqref{eq-F}}.$$
Let $\widetilde{\delta}=\delta /\|\psi_i\|_{L^{\infty}(\overline{\mathcal{H}_i(L)})}$. Since $u^i_l(t,x)$ is an almost-planar front by Assumption~\ref{assumption-l}, there is $M_{\delta}>0$ such that
\begin{eqnarray}\label{Md}
\left\{\begin{array}{lll}
u^i_l(t,x)\ge 1-\delta, && \hbox{ for $t\in\R$ and $x\in\widetilde{\mathcal{H}}_i$ such that $x\cdot e_i+c^i_l t\ge M_{\delta}$},\\
u^i_l(t,x)\le \delta, && \hbox{ for $t\in\R$ and $x\in\widetilde{\mathcal{H}}_i$ such that $x\cdot e_i+c^i_l t\le -M_{\delta}$}.
\end{array}
\right.
\end{eqnarray}
By Lemma~\ref{muil}, there exist $T_1<0$ and $k>0$ such that
\be\label{eq-k}
(u^i_l)_t(t,x)\ge k \hbox{ for $t\le T_1$ and $x\in\widetilde{\mathcal{H}}_i$ such that $-M_{\delta}\le x\cdot e_i+c^i_l t\le M_{\delta}$}.
\ee
Let $\omega$ be a large positive constant such that
\be\label{eq-omega}
\omega k \ge (\gamma+M) e^{\lambda_i(M_{\delta}+L+1)},
\ee
where $M=\sup_{x\in\R^N, u\in[0,1]}|f_u(x,u)|$.
We now construct a subsolution as the following
\begin{eqnarray*}
\underline{u}(t,x)=\left\{\begin{array}{lll}
\max\{u^i_l(\underline{\zeta}(t),x)-\widetilde{\delta} e^{-\lambda_i (x\cdot e_i-L)}\psi_i(x),0\}, &&\hbox{ in $\overline{\mathcal{H}_i(L)}$}\\
0, &&\hbox{ in $\overline{\Omega}\setminus \overline{\mathcal{H}_i(L)}$},
\end{array}
\right.
\end{eqnarray*}
where $\underline{\zeta}(t)=t-\omega e^{\delta t}$.

\begin{lemma}\label{lemma2.1}
There exists $T<0$ such that $\underline{u}(t,x)$ is a subsolution of \eqref{eq1.1} for all $t\le T$ and $x\in \overline{\Omega}$.
\end{lemma}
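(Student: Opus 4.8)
The plan is to verify directly that $\underline{u}$ is a (generalized) subsolution of \eqref{eq1.1} by checking the differential inequality in the interior of each region, the boundary condition on $\partial\Omega$, and the gluing condition across the interface where $\underline{u}$ transitions from the nontrivial expression to $0$. Since $\underline{u}$ is defined as a maximum of a smooth function and $0$, and since the maximum of two subsolutions is a subsolution, it suffices to show that $w(t,x):=u^i_l(\underline{\zeta}(t),x)-\widetilde\delta e^{-\lambda_i(x\cdot e_i-L)}\psi_i(x)$ is a subsolution of \eqref{eq1.1} on the set where it is positive (together with the correct sign of the conormal derivative on $\partial\mathcal H_i(L)\cap\partial\Omega$), because on $\overline\Omega\setminus\overline{\mathcal H_i(L)}$ the function $0$ is trivially a subsolution, and along the artificial interface $\{x\cdot e_i=L\}$ the max-construction handles the matching. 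Here $\underline\zeta(t)=t-\omega e^{\delta t}$ has $\underline\zeta'(t)=1-\omega\delta e^{\delta t}$, which for $t\le T$ with $T$ sufficiently negative is close to $1$, in particular in $(0,1)$.

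First I would compute the parabolic operator applied to $w$. Write $v(x)=e^{-\lambda_i(x\cdot e_i-L)}\psi_i(x)$, so $w=u^i_l(\underline\zeta(t),x)-\widetilde\delta v(x)$. Since $u^i_l$ solves \eqref{eq-extension} (which coincides with \eqref{eq1.1} on $\mathcal H_i\supset\mathcal H_i(L)$), we get
\[
w_t-\mathrm{div}(A\nabla w)+q\cdot\nabla w-f(x,w)
=\big(\underline\zeta'(t)-1\big)(u^i_l)_t
+\big[\mathrm{div}(A\nabla v)-q\cdot\nabla v\big]\widetilde\delta
+\big[f(x,u^i_l)-f(x,w)\big].
\]
Using \eqref{eq-v}, the bracket multiplying $\widetilde\delta$ is $\le\gamma v\widetilde\delta$. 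For the reaction difference, since $0<w<u^i_l\le1$ and $w=u^i_l-\widetilde\delta v$, we bound $f(x,u^i_l)-f(x,w)$ by $M\widetilde\delta v$ in general and, crucially, in the region where $u^i_l$ and $w$ both lie in $[1-\sigma,1]$, by $-\gamma\widetilde\delta v$ using \eqref{eq-F} (monotonicity of $f$ near $1$). The remaining term $(\underline\zeta'(t)-1)(u^i_l)_t=-\omega\delta e^{\delta t}(u^i_l)_t\le0$ is a favorable (negative) contribution. The argument then splits into two zones according to \eqref{Md}: where $x\cdot e_i+c^i_l\underline\zeta(t)\ge M_\delta$ one has $u^i_l\ge1-\delta$ and also (by the choice $\widetilde\delta\|\psi_i\|_\infty=\delta\le\sigma/2$) $w\ge1-\sigma$, so the reaction term is $\le-\gamma\widetilde\delta v$ and cancels the $+\gamma v\widetilde\delta$ from \eqref{eq-v}, leaving a nonpositive total; where $x\cdot e_i+c^i_l\underline\zeta(t)\le M_\delta$ one uses $x\cdot e_i\le M_\delta-c^i_l\underline\zeta(t)$, so $v(x)=e^{-\lambda_i(x\cdot e_i-L)}\psi_i(x)$ is bounded below by a quantity of order $e^{\lambda_i c^i_l\underline\zeta(t)}$, while the bad terms are $O(\widetilde\delta v)=O(e^{-\lambda_i(x\cdot e_i-L)})$; there one invokes the lower bound $(u^i_l)_t\ge k$ from \eqref{eq-k} together with $\omega k\ge(\gamma+M)e^{\lambda_i(M_\delta+L+1)}$ (inequality \eqref{eq-omega}) and $\delta\le\lambda_i c^i_l$ to absorb $(\gamma+M)\widetilde\delta v$ into $\omega\delta e^{\delta t}(u^i_l)_t$. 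One must also treat an intermediate overlap zone $-M_\delta\le x\cdot e_i+c^i_l\underline\zeta(t)\le M_\delta$ where both estimates are available.

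Next I would check the two boundary/interface conditions. On $\partial\mathcal H_i(L)\cap\partial\Omega$: the conormal derivative of $w$ is $\nu A\nabla u^i_l-\widetilde\delta\,\nu A\nabla v$; the first term vanishes by \eqref{eq-extension}, and $\nu A\nabla v=-e^{-\lambda_i(x\cdot e_i-L)}\,\nu A(\lambda_i\psi_i e_i+\nabla\psi_i)\le0$ by the second line of \eqref{psi}, hence $\nu A\nabla w\ge0$, as required for a subsolution with Neumann data. On the artificial face $\{x\cdot e_i=L\}\cap\Omega$, the subsolution property is automatic from the maximum with $0$: it is a standard fact that if $w$ and $0$ are each subsolutions on their respective sides and one takes $\max$, the result is a subsolution in the viscosity/distributional sense provided $w\le0$ on the common boundary — but here we do \emph{not} know $w\le0$ there, so instead the point is that $\underline u=\max\{w,0\}$ and on the whole strip $\{x\cdot e_i<L\}$ we have $\underline u=0$, so the relevant one-sided condition at $\{x\cdot e_i=L\}$ is just that the outward (into $\mathcal H_i(L)$) conormal derivative of the max be $\ge$ that of $0$, which holds because $\max\{w,0\}\ge0=\underline u|_{x\cdot e_i<L}$ with equality where $w\le0$. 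I would make this precise by recalling that a locally-Lipschitz function that is a subsolution on each side of a hypersurface and is continuous across it is a global subsolution.

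The main obstacle is the zone analysis where $x\cdot e_i+c^i_l\underline\zeta(t)\le -M_\delta$, i.e. where $u^i_l$ is small: there $w$ may be negative (handled by the max with $0$), but on the part where $w$ stays positive we cannot use monotonicity of $f$ near $1$, so we must control $f(x,u^i_l)-f(x,w)$ crudely by $M\widetilde\delta v$ and beat it using the exponential decay of $v$ against the lower bound on $(u^i_l)_t$ — this is exactly where \eqref{eq-omega}, $\delta\le\lambda_i c^i_l$, and $\delta\le\sigma/2$ are all needed simultaneously, and getting all the constants to line up (including the $+L+1$ slack in the exponent of \eqref{eq-omega}, which absorbs the difference between $\underline\zeta(t)$ and $t$ for $t\le T$) is the delicate bookkeeping. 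Choosing $T<0$ sufficiently negative at the end ensures $\underline\zeta'(t)\in(0,1)$, $e^{\delta t}$ small, and that the various zones are positioned so that the matching estimates apply.
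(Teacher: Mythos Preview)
Your overall strategy is right and matches the paper: compute the parabolic operator on $w=u^i_l(\underline\zeta(t),x)-\widetilde\delta v$, use \eqref{eq-v}, and split according to \eqref{Md}. The zone $x\cdot e_i+c^i_l\underline\zeta(t)\ge M_\delta$ and the intermediate zone $|x\cdot e_i+c^i_l\underline\zeta(t)|\le M_\delta$ are handled exactly as you describe. But there is a genuine gap in the zone $x\cdot e_i+c^i_l\underline\zeta(t)\le -M_\delta$.

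You write that in this zone ``we cannot use monotonicity of $f$ near $1$, so we must control $f(x,u^i_l)-f(x,w)$ crudely by $M\widetilde\delta v$ and beat it using \ldots\ the lower bound on $(u^i_l)_t$.'' This does not work: the lower bound $(u^i_l)_t\ge k$ from \eqref{eq-k} is only available on the strip $|x\cdot e_i+c^i_l\underline\zeta(t)|\le M_\delta$, not in the zone $\le -M_\delta$. Moreover, in that zone $x\cdot e_i$ can be as small as $L$, so $v(x)$ is of order $1$ (there is no useful \emph{upper} bound on $v$ here; your ``bounded below'' observation points the wrong way), while $(u^i_l)_t$ is merely positive and in fact tends to $0$. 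So the crude Lipschitz estimate cannot be absorbed. What you are missing is that \eqref{eq-F} gives monotonicity of $f$ near $0$ as well: since $u^i_l\le\delta$ and (where $w>0$) also $0<w\le u^i_l\le\delta\le\sigma$, both values lie in $[0,\sigma]$, hence $f(x,u^i_l)-f(x,w)\le-\gamma\widetilde\delta v$, which exactly cancels the $+\gamma\widetilde\delta v$ coming from \eqref{eq-v}. This is what the paper does, and it closes the estimate with no recourse to $(u^i_l)_t$ beyond its positivity.

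Two smaller points. First, on the interface $\{x\cdot e_i=L\}$: you say ``we do \emph{not} know $w\le0$ there,'' but in fact one does, once $T$ is chosen so that $c^i_lT<-L-M_{\delta'}$ with $\delta'=\widetilde\delta\inf\psi_i$. Then $u^i_l(\underline\zeta(t),x)\le\delta'$ on $\{x\cdot e_i=L\}$ while $\widetilde\delta v(x)\ge\delta'$ there, so $w\le0$ and $\underline u$ is genuinely continuous across the interface; no distributional gluing argument is needed. Second, check your signs on the boundary: from \eqref{eq-v} one has $\nu A\nabla v\ge0$, hence $\nu A\nabla w=-\widetilde\delta\,\nu A\nabla v\le0$, which is the correct inequality for a subsolution (not $\ge0$ as you wrote).
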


\begin{proof}
Take $T\le T_1<0$ such that
\be\label{eq-T}
c^i_l T<-L-M_{\delta'},
\ee
where $\delta':=\widetilde{\delta} \inf_{x\in\widetilde{\mathcal{H}}_i} \psi(x)>0$ (one knows from Lemma~\ref{lemma-psi} that  $ \inf_{x\in\widetilde{\mathcal{H}}_i} \psi(x)>0$) and $M_{\delta'}$ is defined by \eqref{Md} with $\delta$ replaced by $\delta'$. Notice that $\underline{\zeta}(t)\le T_1$ for all $t\le T$.
Let us first check that $\underline{u}(t,x)$ is well-defined and continuous for all $t\le T$ and $x\in \overline{\Omega}$. Notice that the interfaces $\Gamma_{\xi}$ of $u_l^i(\xi,x)$ is defined by $\Gamma_{\xi}=\{x\in\widetilde{\mathcal{H}_i}; x\cdot e_i=-c^i_l\xi\}$. Since $c^i_l\underline{\zeta}(t)<-L-M_{\delta'}$ for $t\le T$ by \eqref{eq-T} and $\widetilde{\mathcal{H}_i}=\mathcal{H}_i$ for $x\cdot e_i\ge L$, one has $x\cdot e_i+c^i_l\underline{\zeta}(t)<-M_{\delta'}$ for $t\le T$ and $x\in\overline{\mathcal{H}_i}$ such that $x\cdot e_i\le L$. Thus, it follows from \eqref{Md} that
 $$u^i_l(\underline{\zeta}(t),x)\le \delta' \hbox{ for $t\le T$ and $x\in\overline{\mathcal{H}_i}$ such that $x\cdot e_i\le L$}.$$
 Meantime, on $x\in\overline{\mathcal{H}_i}$ such that $x\cdot e_i= L$, one has $\widetilde{\delta} e^{-\lambda_i (x\cdot e_i-L)}\psi_i(x)\ge \delta'$. Therefore, $\underline{u}(t,x)=0$ for $t\le T$ and $x\in\overline{\mathcal{H}_i}$ such that $x\cdot e_i= L$. By the definition of $\underline{u}(t,x)$, it is well-defined and continuous in $\overline{\Omega}$. Since $u^i_l(t,x)$ satisfies $\nu A(x)\nabla u^i_l(t,x)=0$ on $x\in\partial \mathcal{H}_i(L)$ and by \eqref{eq-v}, one knows that  $\underline{u}(t,x)$ satisfies $\nu A(x) \nabla\underline{u}(t,x)\le 0$ for any $t\le T$ and $x\in\partial\Omega$.

To prove that $\underline{u}(t,x)$ is a subsolution, one only has to check that
$$N(t,x):=\underline{u}_t-\hbox{ div} (A(x) \nabla \underline{u}) + q(x)\cdot \nabla \underline{u}-f(x,\underline{u})\le 0,$$
for $t\le T$ and $x\in\overline{\Omega}$ such that $\underline{u}(t,x)>0$. From above arguments, $\underline{u}(t,x)>0$ implies that $x\in\overline{\mathcal{H}_i(L)}$ and $\underline{u}(t,x)=u^i_l(\underline{\zeta}(t),x)-\widetilde{\delta} e^{-\lambda_i(x\cdot e_i-L)}\psi_i(x)$. Since $u^i_l(t,x)$ satisfies \eqref{eq1.1} for $x\in\overline{\mathcal{H}_i(L)}$, it follows from some calculations and \eqref{eq-v} that
\begin{align*}
N(t,x)\le -\omega \delta e^{\delta t} (u^i_l)_t(\underline{\zeta}(t),x) +\gamma \widetilde{\delta} e^{-\lambda_i(x\cdot e_i-L)}\psi_i(x) +f(x,u^i_l(\underline{\zeta}(t),x))-f(x,\underline{u}(t,x)).
\end{align*}
For $t\le T$ and $x\in\overline{\mathcal{H}_i(L)}$ such that $x\cdot e_i+c^i_l \underline{\zeta}(t)\le -M_{\delta}$, it follows that $0<u^i_l(\underline{\zeta}(t),x)\le \delta$ and $\underline{u}(t,x)\le \delta$. Then, by $\delta\le \sigma/2$ and \eqref{eq-F},
$$f(x,u^i_l(\underline{\zeta}(t),x))-f(x,\underline{u}(t,x))\le -\gamma\widetilde{\delta} e^{-\lambda_i(x\cdot e_i-L)} \psi_i(x).$$
Thus,
\begin{align*}
N(t,x)\le -\omega\delta e^{\delta t}(u^i_l)_t(\underline{\zeta}(t),x) +\Big(\gamma-\gamma \Big)\widetilde{\delta} e^{-\lambda_i(x\cdot e_i-L)}\psi_i(x)\le 0,
\end{align*}
by $(u^i_l)_t>0$. For $t<T$ and $x\in\overline{\mathcal{H}_i(L)}$ such that $x\cdot e_i+c^i_l \underline{\zeta}(t)\ge M_{\delta}$, it follows that $u^i_l(\underline{\zeta}(t),x)\ge 1-\delta$ and $\underline{u}(t,x)\ge 1-2\delta$. Then, by $\delta\le \sigma/2$ and \eqref{eq-F},
$$f(x,u^i_l(\underline{\zeta}(t),x))-f(x,\underline{u}(t,x))\le -\gamma\widetilde{\delta} e^{-\lambda_i(x\cdot e_i-L)} \psi_i(x).$$
Thus,
$$N(t,x)\le -\omega\delta e^{\delta t} (u^i_l)_t(\underline{\zeta}(t),x)+\Big(\gamma-\gamma\Big)\widetilde{\delta} e^{-\lambda_i(x\cdot e_i-L)}\psi_i(x)\le 0,$$
by $(u^i_l)_t>0$.

Finally, for $t\le T$ and $x\in\overline{\mathcal{H}_i(L)}$ such that $-M_{\delta}\le x\cdot e_i+c^i_l \underline{\zeta}(t)\le M_{\delta}$, one has $(u^i_l)_t(\underline{\zeta}(t),x)\ge k>0$ where $k$ is defined by \eqref{eq-k}. It also implies that $x\cdot e_i\ge -c^i_l(t-\omega e^{\delta t})-M_{\delta}$ and hence $e^{-\lambda_i(x\cdot e_i-L)}\le e^{\lambda_i c^i_l t} e^{\lambda_i(M_{\delta}+L)}$. It is obvious that
$$f(x,u^i_l(\underline{\zeta}(t),x))-f(x,\underline{u}(t,x))\le M\widetilde{\delta} e^{-\lambda_i(x\cdot e_i-L)} \psi_i(x),$$
where $M=\sup_{x\in\R^N,u\in [0,1]} |f_u(x,u)|$.
Therefore,
\begin{align*}
N(t,x)\le& -k\omega \delta e^{\delta t}+\gamma\widetilde{\delta} e^{-\lambda_i(x\cdot e_i-L)}\psi_i(x)+M\widetilde{\delta} e^{-\lambda_i(x\cdot e_i-L)}\psi_i(x)\\
\le& -k\omega \delta e^{\delta t} +(\gamma+M) \widetilde{\delta} e^{\lambda_i c^i_l t} e^{\lambda_i (M_{\delta}+L)}\psi_i(x)\le 0,
\end{align*}
by $\delta\le \lambda_i c^i_l$, $\widetilde{\delta}\psi_i(x)\le \delta$ and \eqref{eq-omega}.
 This completes the proof.
\end{proof}
\vskip 0.3cm

Take any small $\varepsilon>0$ (at least $\varepsilon<\delta'=\widetilde{\delta} \inf_{x\in\widetilde{\mathcal{H}}_i} \psi(x)$). Let $L_{\varepsilon}\ge L$ large enough such that $\delta e^{-\lambda_i(L_{\varepsilon}-L)}\le \varepsilon/2$. We now construct supersolutions as the following
$$\overline{u}_1(t,x)=u_l^i(\overline{\zeta}(t),x)+\widetilde{\delta} e^{-\lambda_i(x\cdot e_i-L)}\psi_i(x), \hbox{ in $\overline{\mathcal{H}_i(L)}$},$$
where $\overline{\zeta}(t)=t+\omega e^{\delta t}$ and $\omega$ is defined by \eqref{eq-omega},
and
$$\overline{u}_2(t,x)=\varepsilon \hbox{ for $x\in\overline{\mathcal{H}_i}$ such that $x\cdot e_i\le L_{\varepsilon}$ and $x\in \overline{\Omega}\setminus\overline{\mathcal{H}_i}$}.$$

\begin{lemma}\label{lemma2.2}
There exists $T_{\varepsilon}<0$ such that $\overline{u}_1(t,x)$ is a supersolution of \eqref{eq1.1} for $t\le T_{\varepsilon}$ and $x\in\overline{\mathcal{H}_i(L)}$ and $\overline{u}_2(t,x)$ is a supersolution of \eqref{eq1.1} for $t\le T_{\varepsilon}$ and $x\in\overline{\mathcal{H}_i}$ such that $x\cdot e_i\le L_{\varepsilon}$ and $x\in\overline{\Omega}\setminus\overline{\mathcal{H}_i}$.
\end{lemma}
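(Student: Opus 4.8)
The plan is to verify the two assertions separately, disposing of $\overline{u}_2$ immediately and then obtaining $\overline{u}_1$ as a sign‑reversed mirror image of the subsolution computation in Lemma~\ref{lemma2.1}. For $\overline{u}_2\equiv\varepsilon$: since $\varepsilon<\delta'\le\delta\le\sigma/2$, the bistability bound \eqref{eq-F} gives $f(x,\varepsilon)\le-\gamma\varepsilon<0$, so the interior inequality $\overline{u}_{2,t}-\mathrm{div}(A\nabla\overline{u}_2)+q\cdot\nabla\overline{u}_2-f(x,\overline{u}_2)=-f(x,\varepsilon)\ge0$ holds, and the Neumann inequality $\nu A\nabla\varepsilon=0\ge0$ is trivial because $\overline{u}_2$ is constant (both on $\partial\Omega$ and across the cut $\{x\cdot e_i=L_\varepsilon\}$). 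This needs no restriction on $t$; the threshold $T_\varepsilon$ is dictated entirely by $\overline{u}_1$.

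For $\overline{u}_1$ I would first fix $T_\varepsilon<0$ small enough that three things hold for $t\le T_\varepsilon$: (i) $\overline{\zeta}(t)\le T_1$ — possible because $\overline{\zeta}(t)=t+\omega e^{\delta t}$ is increasing and $\to-\infty$ as $t\to-\infty$ — so that \eqref{eq-k} applies at the shifted time $\overline{\zeta}(t)$; (ii) $\omega c^i_l e^{\delta t}\le1$; and (iii), for the later gluing of the two pieces via minima, $u^i_l(\overline{\zeta}(t),x)\le\varepsilon/2$ on $\{x\cdot e_i=L_\varepsilon\}$. Continuity, positivity and well‑definedness of $\overline{u}_1$ on $\overline{\mathcal H_i(L)}$ are clear, and writing $v(x):=e^{-\lambda_i(x\cdot e_i-L)}\psi_i(x)$, the supersolution boundary inequality $\nu A\nabla\overline{u}_1=\nu A\nabla u^i_l+\widetilde{\delta}\,\nu A\nabla v\ge0$ follows from $\nu A\nabla u^i_l=0$ on the lateral boundary (where the coefficients agree with those of \eqref{eq1.1}) and the second line of \eqref{eq-v}. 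Differentiating, using that $u^i_l$ solves \eqref{eq-extension} and the first line of \eqref{eq-v}, one reaches, for the parabolic operator $N$ applied to $\overline{u}_1$,
\[
N(t,x)\ \ge\ \omega\delta e^{\delta t}\,(u^i_l)_t(\overline{\zeta}(t),x)+\big[f(x,u^i_l(\overline{\zeta}(t),x))-f(x,\overline{u}_1(t,x))\big]-\gamma\widetilde{\delta}\,v(x),
\]
the only structural change from Lemma~\ref{lemma2.1} being that the perturbation is now added, which flips the sign of the reaction difference.

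Next I would split $\overline{\mathcal H_i(L)}$ according to whether $x\cdot e_i+c^i_l\overline{\zeta}(t)\le-M_\delta$, $\ge M_\delta$, or $|x\cdot e_i+c^i_l\overline{\zeta}(t)|\le M_\delta$. On the first region $u^i_l\le\delta$, hence $\overline{u}_1\le2\delta\le\sigma$ (using $\widetilde{\delta}v\le\widetilde{\delta}\|\psi_i\|_{L^\infty}=\delta$ since $x\cdot e_i\ge L$), and the stability of $0$ in \eqref{eq-F} gives $f(x,u^i_l)-f(x,\overline{u}_1)\ge\gamma\widetilde{\delta}v$, so $N\ge\omega\delta e^{\delta t}(u^i_l)_t\ge0$; the second region is symmetric, using the stability of $1$. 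On the middle region, $(u^i_l)_t(\overline{\zeta}(t),x)\ge k$ by \eqref{eq-k}, the crude bound $f(x,u^i_l)-f(x,\overline{u}_1)\ge-M\widetilde{\delta}v$ holds with $M=\sup|f_u|$, and from $x\cdot e_i\ge-c^i_l\overline{\zeta}(t)-M_\delta=-c^i_l t-c^i_l\omega e^{\delta t}-M_\delta$ together with $\omega c^i_l e^{\delta t}\le1$ one gets $e^{-\lambda_i(x\cdot e_i-L)}\le e^{\lambda_i c^i_l t}e^{\lambda_i(M_\delta+L+1)}$; therefore
\[
N(t,x)\ \ge\ \omega\delta k\,e^{\delta t}-(\gamma+M)\,\delta\,e^{\lambda_i c^i_l t}e^{\lambda_i(M_\delta+L+1)}\ \ge\ 0,
\]
using $\widetilde{\delta}\|\psi_i\|_{L^\infty}=\delta$, $\delta\le\lambda_i c^i_l$ (so $e^{(\lambda_i c^i_l-\delta)t}\le1$ for $t\le0$), and \eqref{eq-omega}.

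The main obstacle, and the only place the argument genuinely differs from Lemma~\ref{lemma2.1}, is this middle region: because $\overline{\zeta}(t)=t+\omega e^{\delta t}$ pushes the profile of $u^i_l$ forward (whereas $\underline{\zeta}$ pushed it backward, with a favorable sign), the estimate on $e^{-\lambda_i(x\cdot e_i-L)}$ acquires the extra factor $e^{\lambda_i c^i_l\omega e^{\delta t}}$, which is $\le e^{\lambda_i}$ exactly when $\omega c^i_l e^{\delta t}\le1$; this is precisely what the ``$+1$'' in the exponent of \eqref{eq-omega} is designed to absorb, and it is why $T_\varepsilon$ must be chosen small in an $\omega$‑dependent way. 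A secondary technical point is that $\overline{u}_1$ can slightly exceed $1$ (one only has $\overline{u}_1\le1+\delta$); I would handle this either by understanding $f$ to be extended past $1$ keeping the stability of $1$, or equivalently by replacing $\overline{u}_1$ with $\min\{\overline{u}_1,1\}$ and noting that $1$ is a stationary supersolution, so that the minimum is again a supersolution.
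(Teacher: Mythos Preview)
Your proposal is correct and follows essentially the same approach as the paper: you dispose of $\overline{u}_2$ via $f(x,\varepsilon)<0$, then mirror the three-region computation of Lemma~\ref{lemma2.1} with reversed signs, using the same choice of $T_\varepsilon$ (ensuring $\overline{\zeta}(t)\le T_1$ and $c^i_l\omega e^{\delta t}\le 1$) and the same exponential estimate in the middle region that exploits the ``$+1$'' margin built into \eqref{eq-omega}. You are in fact slightly more careful than the paper in one respect: you flag that $\overline{u}_1$ may exceed $1$, which the paper silently handles by the implicit convention that $f$ is extended past $1$ keeping the stability of that equilibrium.
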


\begin{proof}
Take $T_{\varepsilon}\le T_1-\omega<0$ such that $c^i_l \omega e^{\delta t}\le 1$ for all $t\le T_{\varepsilon}$ and
$$c_l^i T_{\varepsilon}\le -L_{\varepsilon}-M_{\varepsilon/2}-1,$$
where $M_{\varepsilon/2}$ is defined by \eqref{Md} with $\delta$ replaced by $\varepsilon/2$.
Notice that $\overline{\zeta}(t)\le T_1$ for all $t\le T_{\varepsilon}$.
Since $f(x,\varepsilon)<0$ by $\varepsilon<\delta'<\delta<\sigma$, one can easily find that $\overline{u}_2(t,x)$ is a supersolution of \eqref{eq1.1} for $t\le T_{\varepsilon}$ and $x\in\overline{\mathcal{H}_i}$ such that $x\cdot e_i\le L_{\varepsilon}$ and $x\in\overline{\Omega}\setminus\overline{\mathcal{H}_i}$. Obviously, $\overline{u}_1(t,x)$ is of $C^2$ in $\overline{\mathcal{H}_i(L)}$. By $\nu A(x)\nabla u^i_l(t,x)=0$ on $\partial \mathcal{H}_i(L)$ and \eqref{eq-v}, it follows that $\nu A(x)\nabla\overline{u}_1(t,x)\ge 0$ for any $t\le T_{\varepsilon}$ and $x\in\partial\mathcal{H}_i(L)$.

Now, we check that
$$N(t,x):= \overline{u}_{1t}-\hbox{div}(A(x)\nabla \overline{u}_1)+q(x)\cdot \nabla \overline{u}_1-f(x,\overline{u}_1)\ge 0,$$
for $t\le T_{\varepsilon}$ and $x\in \overline{\mathcal{H}_i(L)}$. By some calculation, one can obtain that
\begin{align*}
N(t,x)\ge \omega \delta e^{\delta t} (u^i_l)_t(\overline{\zeta}(t),x) -\gamma \widetilde{\delta} e^{-\lambda_i(x\cdot e_i-L)}\psi_i(x) +f(x,u^i_l(\overline{\zeta}(t),x))-f(x,\overline{u}_1(t,x)),
\end{align*}
since $u^i_l(t,x)$ satisfies \eqref{eq1.1} for $x\in\overline{\mathcal{H}_i(L)}$ and by \eqref{eq-v}.
For $t\le T_{\varepsilon}$ and $x\in\overline{\mathcal{H}_i(L)}$ such that $x\cdot e_i+c^i_l \overline{\zeta}(t)\le -M_{\delta}$, it follows that $0<u^i_l(\overline{\zeta}(t),x)\le \delta$. Thus, $\overline{u}_1(t,x)\le 2\delta\le \sigma$ since $\widetilde{\delta} e^{-\lambda_i(x\cdot e_i-L)} \psi_i(x)\le \widetilde{\delta}\psi_i(x)<\delta$. Then, by \eqref{eq-F},
$$f(x,u^i_l(\overline{\zeta}(t),x)-f(x,\overline{u}_1(t,x))\ge \gamma\widetilde{\delta} e^{-\lambda_i(x\cdot e_i-L)} \psi_i(x).$$
Thus,
\begin{align*}
N(t,x)\ge \omega\delta e^{\delta t}(u^i_l)_t(\overline{\zeta}(t),x) +\Big(-\gamma+\gamma  \Big)\widetilde{\delta} e^{-\lambda_i(x\cdot e_i-L)}\psi_i(x)\ge 0,
\end{align*}
by $(u^i_l)_t>0$. For $t\le T_{\varepsilon}$ and $x\in\overline{\mathcal{H}_i(L)}$ such that $x\cdot e_i+c^i_l \overline{\zeta}(t)\ge M_{\delta}$, it follows that $u^i_l(\overline{\zeta}(t),x)\ge 1-\delta$ and $\overline{u}_1(t,x)\ge 1-\delta\ge 1-\sigma$. Then, by \eqref{eq-F},
$$f(x,u^i_l(\overline{\zeta}(t),x))-f(x,\underline{u}_1(t,x))\ge \gamma\widetilde{\delta} e^{-\lambda_i(x\cdot e_i-L)} \psi_i(x).$$
Thus,
$$N(t,x)\ge \omega\delta e^{\delta t} (u^i_l)_t(\overline{\zeta}(t),x)+\Big(-\gamma+\gamma\Big)\widetilde{\delta} e^{-\lambda_i(x\cdot e_i-L)}\psi_i(x)\ge 0,$$
by $(u^i_l)_t>0$.

Finally, for $t\le T_{\varepsilon}$ and $x\in\overline{\mathcal{H}_i(L)}$ such that $-M_{\delta}\le x\cdot e_i+c^i_l \overline{\zeta}(t)\le M_{\delta}$, one has $(u^i_l)_t(\overline{\zeta}(t),x)\ge k>0$ where $k$ is defined by \eqref{eq-k}. It also implies that $x\cdot e_i\ge -c^i_l\overline{\zeta}(t)-M_{\delta}\ge -c^i_l t-1-M_{\delta}$ since $c^i_l \omega e^{\delta t}\le 1$ for $t\le T_{\varepsilon}$ and hence $e^{-\lambda_i(x\cdot e_i-L)}\le e^{\lambda_i c^i_l t} e^{\lambda_i(M_{\delta}+L+1)}$. It is obvious that
$$f(x,u^i_l(\overline{\zeta}(t),x))-f(x,\overline{u}_1(t,x))\ge -M\widetilde{\delta} e^{-\lambda_i(x\cdot e_i-L)} \psi_i(x),$$
where $M=\sup_{x\in\R^N,u\in [0,1]} |f_u(x,u)|$. Therefore,
\begin{align*}
N(t,x)\ge& k\omega \delta e^{\delta t}-\gamma\widetilde{\delta} e^{-\lambda_i(x\cdot e_i-L)}\psi_i(x)-M\widetilde{d} e^{-\lambda_i(x\cdot e_i-L)}\psi_i(x)\\
\ge& k\omega \delta e^{\delta t} -(\gamma+M) \widetilde{\delta} e^{\lambda_i c^i_l t} e^{\lambda_i (M_{\delta}+L+1)}\psi_i(x)\ge 0,
\end{align*}
by $\delta\le \lambda_i c^i_l$, $\widetilde{\delta} \psi_i(x)\le \delta$ and \eqref{eq-omega}.
 This completes the proof.
\end{proof}
\vskip 0.3cm

Notice that $\overline{u}_1(t,x)\ge \delta'>\varepsilon$  for all $t\le T_{\varepsilon}$ and $x\in\overline{\mathcal{H}_i}$ such that $x\cdot e_i=L$. By the definition of $L_{\varepsilon}$, we know that $\widetilde{\delta} e^{-\lambda_i(x\cdot e_i-L)}\psi_i(x)\le \varepsilon/2$ for $x\in\overline{\mathcal{H}_i(L)}$ such that $x\cdot e_i=L_{\varepsilon}$. By the definition of $T_{\varepsilon}$, one has that $x\cdot e_i+c^i_l\overline{\zeta}(t)\le -M_{\varepsilon/2}$ for $t\le T_{\varepsilon}$ and $x\in\overline{\mathcal{H}_i(L)}$ such that $x\cdot e_i=L_{\varepsilon}$. Thus, $u_l^i(\overline{\zeta}(t),x)\le \varepsilon/2$ and hence $\overline{u}_1(t,x)\le \varepsilon$ for $t\le T_{\varepsilon}$ and $x\in\overline{\mathcal{H}_i(L)}$ such that $x\cdot e_i=L_{\varepsilon}$. Let
\begin{eqnarray*}
\overline{u}(t,x)=\left\{\begin{array}{lll}
\overline{u}_1(t,x), &&\hbox{ for $t\le T_{\varepsilon}$ and $x\in\overline{\mathcal{H}_i(L_{\varepsilon})}$}\\
\min\{\overline{u}_1(t,x),\overline{u}_2(t,x)\}, &&\hbox{ for $t\le T_{\varepsilon}$ and $x\in\overline{\mathcal{H}_i(L)}$ such that $x\cdot e_i\le L_{\varepsilon}$},\\
\overline{u}_2(t,x),&&\hbox{for $t\le T_{\varepsilon}$ and $x\in \overline{\Omega}\setminus\overline{\mathcal{H}_i(L)}$}£¬
\end{array}
\right.
\end{eqnarray*}
which is well-defined by above analysis. Moreover, it is a supersolution of \eqref{eq1.1} for $t\le T_{\varepsilon}$ and $x\in\overline{\Omega}$ by Lemma \ref{lemma2.2} and the maximum principle.

\subsection{Existence, monotonicity and uniqueness of the entire solution}
We now prove the existence of an entire solution satisfying \eqref{frontlike-2.1}. Consider a sequence of solutions $u_n$ of \eqref{eq1.1} for $t>-n$ with initial value
$$u_n(-n,x)=\underline{u}(-n,x).$$
It is obvious that $\underline{u}(t,x)$ is increasing in $t$ for $t$ negative enough and $\underline{u}(t,x)\le \overline{u}(t,x)$. Even if it means decreasing $T_{\varepsilon}$, assume $T_{\varepsilon}\le T$ where $T$ and $T_{\varepsilon}$ are defined by Lemma~\ref{lemma2.1} and Lemma~\ref{lemma2.2} respectively. Then, it follows from the comparison principle that
\be\label{lunl}
\underline{u}(t,x)\le u_n(t,x)\le \overline{u}(t,x) \hbox{ for $-n\le t<T_{\varepsilon}$ and $x\in\Omega$},
\ee
and
$$u_n(t,x)\ge u_{n-1}(t,x) \hbox{ for all $t\in [-n+1,+\infty)$, $x\in\Omega$}.$$
Using the monotonicity of the sequence and parabolic estimates on $u_n$, we have that the sequence $u_n$ converges to an entire solution $u(t,x)$ of \eqref{eq1.1}. By \eqref{lunl}, the solution $u(t,x)$ satisfies
$$\underline{u}(t,x)\le u(t,x)\le \overline{u}(t,x) \hbox{ for all $t\in (-\infty,T_{\varepsilon})$ and $x\in\Omega$}.$$
By definition of $\underline{u}$, $\overline{u}$ and remembering that $\varepsilon$ can be arbitrary small, one then has that
\begin{equation*}
\left\{\baa{rcll}
u(t,x)\!-\!u^i_l(t,x) & \!\!\!\to\!\!\! & 0\!\! & \text{uniformly in $\overline{\mathcal{H}_i}\cap\overline{\Omega}\,$},\vspace{3pt}\\
u(t,x) & \!\!\to\!\! & 0\!\! & \displaystyle\text{uniformly in $\overline{\Omega\setminus \mathcal{H}_i}$},
\eaa\right.
\end{equation*}
as $t\rightarrow -\infty$.
Since $\underline{u}_t>0$ for $t$ negative enough, it follows from the maximum principle that $(u_n)_t>0$ for $t>-n$ and $x\in\Omega$. Passing to the limit $n\rightarrow +\infty$, one gets that $u_t\ge 0$ for $t\in\R$ and $x\in\Omega$. Again by the maximum principle, either $u_t>0$ or $u_t\equiv 0$. Since $u(t,x)\rightarrow u_l^i(t,x)$ in $\overline{\mathcal{H}_i}\cap\overline{\Omega}$ as $t\rightarrow -\infty$ and $(u^i_l)_t>0$, $u_t\equiv 0$ is impossible. Therefore, $u_t>0$ for $t\in\R$ and $x\in\Omega$.

For any $0<\delta\le 1/2$, define
$$\Omega_{\delta}(t)=\{x\in\Omega; \delta\le u(t,x)\le 1-\delta\}.$$
One can apply the proof of Lemma~\ref{muil} to get the following lemma.

\begin{lemma}\label{lemma-m}
For all $\delta\in (0,1/2]$, there exist $T_{\delta}<0$ and $k>0$ such that
$$u_t(t,x)\ge k \hbox{ for all $t\in (-\infty,T_{\delta}]$ and $x\in\Omega_{\delta}(t)$.}$$
\end{lemma}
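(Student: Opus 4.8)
\textbf{Proof plan for Lemma~\ref{lemma-m}.}
The plan is to mimic the proof of Lemma~\ref{muil}, replacing the role of the almost-planar front $u^i_l$ by the entire solution $u$ and the role of the "level sets near the interface'' by the sets $\Omega_\delta(t)$. Fix $\delta\in(0,1/2]$. First I would argue by contradiction: suppose the conclusion fails, so there exist a sequence $t_n\to -\infty$ (note that for $t$ bounded below but negative enough the statement is immediate since $u_t>0$ on a compact-in-space set cannot vanish; the only genuine difficulty is $t_n\to -\infty$, exactly as in Lemma~\ref{muil}) and points $x_n\in\Omega_\delta(t_n)$ with $u_t(t_n,x_n)\to 0$. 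We have $\delta\le u(t_n,x_n)\le 1-\delta$. Using the already established convergence $u(t,\cdot)-u^i_l(t,\cdot)\to 0$ uniformly in $\overline{\mathcal H_i}\cap\overline\Omega$ and $u(t,\cdot)\to 0$ uniformly in $\overline{\Omega\setminus\mathcal H_i}$ as $t\to-\infty$, together with the almost-planar bounds \eqref{Md} for $u^i_l$, the points $x_n$ must lie in $\overline{\mathcal H_i}\cap\overline\Omega$ and satisfy $|x_n\cdot e_i + c^i_l t_n|\le D$ for some constant $D=D(\delta)$; in particular $x_n\cdot e_i\to +\infty$, so $x_n$ eventually lies in the straight-extension regime $\mathcal H_i(L)$ where $\Omega$ coincides with the branch.

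Next I would pass to a limit in a moving frame. Since $|\widetilde\omega_i(s)|$ is uniformly bounded, after extracting a subsequence the transverse components $x_n'$ converge, and one sets $\mathcal H_i^n=\mathcal H_i-(x_n\cdot e_i)e_i$, $A_n(x)=A(x+(x_n\cdot e_i)e_i)$, etc. By Assumption~\ref{assumption-limiting} these converge locally uniformly to an infinite cylinder $\mathcal H_i^\infty$ with coefficients $A_\infty,q_\infty,f_\infty$ admitting a unique (up to shift) almost-planar front $v_l$ facing $-e_i$. Define $u_n(t,x)=u(t+t_n,\,x+(x_n\cdot e_i)e_i)$; by parabolic estimates $u_n$ converges (up to a subsequence) to an entire solution $u_*$ of the limiting equation on $\mathcal H_i^\infty$. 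The transition-front bounds transfer, as in \eqref{MD}, so $u_*$ is an almost-planar front facing $-e_i$ with speed $c^i_l>0$, hence by Assumption~\ref{assumption-limiting} it equals $v_l$ up to a time shift, and therefore $(u_*)_t>0$ everywhere by Theorem~1.11 of \cite{BH1} (here one uses $c^i_l>0$, which makes $v_l$ an invasion of $0$ by $1$). But $(0,x_n')$ converges to a point $(0,x_*')\in\mathcal H_i^\infty$ and $(u_*)_t(0,0,x_*')=\lim_n u_t(t_n,x_n)=0$, a contradiction. For the case where the limit frame point lands on $\partial\mathcal H_i^\infty$, the Hopf lemma gives $\partial_\nu (u_*)_t<0$ there, contradicting the Neumann boundary condition satisfied by $(u_*)_t$; and for a bounded limit $t_n\to t_*$ one uses instead that $x_n$ stays in a fixed compact set, $u_t(t_*,x_*)=0$ with $u_t\ge 0$, and the strong maximum principle / Hopf lemma on the original equation give the contradiction directly.

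The main obstacle, exactly as in Lemma~\ref{muil}, is the case $t_n\to-\infty$: one cannot simply invoke compactness on $\mathbb R\times\overline\Omega$, and must instead localize in the branch, translate, and use Assumption~\ref{assumption-limiting} to identify the limit as the canonical almost-planar front $v_l$ so that its strict time-monotonicity can be quoted. The one point requiring a little care beyond a verbatim copy of Lemma~\ref{muil} is the preliminary step of showing $x_n\in\overline{\mathcal H_i}$ with $|x_n\cdot e_i+c^i_l t_n|$ bounded: there it was built into the hypothesis, whereas here it has to be deduced from $x_n\in\Omega_\delta(t_n)$ using the already-proved large-negative-time asymptotics \eqref{frontlike-2.1} of $u$ and the almost-planar estimate \eqref{Md}; once this is in place the rest of the argument is identical.
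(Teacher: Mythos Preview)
Your proposal is correct and follows exactly the route the paper intends: the paper's own proof is the single line ``One can apply the proof of Lemma~\ref{muil} to get the following lemma,'' and the subsequent Remark spells out precisely the extra ingredient you identified, namely that for $t\le T_\delta$ negative enough the asymptotics \eqref{frontlike-2.1} force $\Omega_\delta(t)\subset\{x\in\mathcal H_i:\ |x\cdot e_i+c^i_l t|\le D\}$, after which the argument of Lemma~\ref{muil} goes through verbatim.
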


\begin{remark}
Since $u(t,x)$ satisfies \eqref{frontlike-2.1}, one can take $T_{\delta}$ negative enough such that $\Omega_{\delta}(t)\subset \{x\in\mathcal{H}_i; |x\cdot e_i+x^i_l t|\le D\}$ for $t\le T_{\delta}$ and some $D>0$.
\end{remark}

Now, we prove the uniqueness of the entire solution $u$. Let $\delta>0$ be defined as in Section~2.1. Assume that there is another entire solution $v(t,x)$ satisfying \eqref{frontlike-2.1}. Then, for any $0<\varepsilon<\delta$, there is $t_{\varepsilon}<0$ such that
$$\sup_{x\in \Omega} |u(t,x)-v(t,x)|\le \varepsilon, \hbox{ for any $t\le t_{\varepsilon}$}.$$
For any $t_0\le \min(t_{\varepsilon},T_{\delta}-\omega\varepsilon)$, define
$$u^+(t,x)=u(t_0+t+\omega \varepsilon(1-e^{-\delta t}),x)+\varepsilon e^{-\delta t} \hbox{ and } u^-(t,x)=u(t_0+t-\omega \varepsilon(1-e^{-\delta t}),x)-\varepsilon e^{-\delta t},$$
where $\omega>0$ is a constant such that $\omega k\delta\ge \delta+M$, $k$ is defined by Lemma~\ref{lemma-m} and $M=\sup_{x\in\R^N, u\in[0,1]}|f_u(x,u)|$. One can check that $u^+(t,x)$ and $u^-(t,x)$ are sup- and subsolutions of the problem satisfied by $v(t_0+t,x)$ for $t\in [0,T_{\delta}-t_0-\omega \varepsilon]$. We omit the details of the checking process by referring to similar arguments as in Section~3 of \cite{BHM}. Then, by the comparison principle, one has that
$$u^-(t,x)\le v(t_0+t,x)\le u^+(t,x), \hbox{ for $t\in [0,T_{\delta}-t_0-\omega \varepsilon]$ and $x\in \Omega$}.$$
It implies that
$$u(t-\omega \varepsilon(1-e^{-\delta (t-t_0)}),x)-\varepsilon e^{-\delta (t-t_0)}\le v(t,x)\le u(t+\omega \varepsilon(1-e^{-\delta (t-t_0)}),x)+\varepsilon e^{-\delta (t-t_0)},$$
for $t\in [t_0,T_{\delta}-\omega \varepsilon]$ and $x\in \Omega$.
As $t_0\rightarrow -\infty$, one gets that
\be\label{eq-uvu}
u(t-\omega\varepsilon,x)\le v(t,x)\le u(t+\omega \varepsilon,x)
\ee
for all $t\in(-\infty,T_{\delta}-\omega\varepsilon]$ and $x\in\Omega$. Again by the comparison principle, \eqref{eq-uvu} holds for all $t\in\R$ and $x\in\Omega$. Since $\varepsilon$ is arbitrary, we then get that $v(t,x)\equiv u(t,x)$. This completes the proof of the uniqueness of the entire solution satisfying \eqref{frontlike-2.1}.

\subsection{Proof of Corollary~\ref{cor1}}

We complete this section by proving Corollary~\ref{cor1}.
\vskip 0.3cm

\begin{proof}[Proof of Corollary~\ref{cor1}]
Let $u(t,x)$ be the entire solution satisfying \eqref{frontlike}. Assume that $c^j_r<0$ for some $j\in J=\{1,\cdots,m\}\setminus I$. By replacing $u$ and $f(x,u)$ by $1-u$ and $-f(x,1-u)$ and applying the same arguments as in Section~2.1 and Section~2.2, one can prove that there is an entire solution $v(t,x)$ of \eqref{eq1.1} satisfying
\begin{equation*}
\left\{\baa{rcll}
v(t,x)\!-\!u^j_r(t,x) & \!\!\!\to\!\!\! & 0\!\! & \text{uniformly in $\overline{\mathcal{H}_j}\cap\overline{\Omega}\,$},\vspace{3pt}\\
v(t,x) & \!\!\to\!\! & 1\!\! & \displaystyle\text{uniformly in $\overline{\Omega\setminus \mathcal{H}_j}$},
\eaa\right.
\end{equation*}
as $t\rightarrow -\infty$ and $v(t,x)$ is decreasing as $t$ increases. Then, for any $\varepsilon>0$, there is $t_{\varepsilon}<0$ such that
$$v(t,x)\ge 1-\varepsilon \hbox{ for $t\le t_{\varepsilon}$ and $x\in \overline{\Omega\setminus \mathcal{H}_j}$},$$
and
$$u(t,x)\le \varepsilon \hbox{ for $t\le t_{\varepsilon}$ and $x\in \overline{\Omega\setminus \mathop{\bigcup}_{i\in I} \mathcal{H}_i}$}.$$
Since $I\cap J=\emptyset$ and $u(t,x)\le 1$ for all $t\in\R$ and $x\in\overline{\Omega}$, one has that
$$u(t_0,x)-\varepsilon\le v(t_0,x), \hbox{ for all $x\in\overline{\Omega}$},$$
where $t_0\le \min(t_{\varepsilon},T_{\delta}-\omega\varepsilon)$, $\delta$, $\omega$, $T_{\delta}$ are parameters as defined in Section~2.2. Then, by similar arguments as in Section~3 of \cite{BHM}, one can easily check that the function
$$
u^-(t,x)=u(t_0+t-\omega \varepsilon(1-e^{-\delta t}),x)-\varepsilon e^{-\delta t}
$$
is a subsolution of the problem satisfied by $v(t+t_0,x)$ for  $t\in [0,T_{\delta}-t_0-\omega \varepsilon]$. It follows from the comparison principle that
$$u(t-\omega \varepsilon(1-e^{-\delta (t-t_0)}),x)-\varepsilon e^{-\delta (t-t_0)}\le v(t,x),$$
for $t\in[t_0,T_{\delta}-\omega\varepsilon]$ and $x\in\Omega$. As $t_0\rightarrow -\infty$, one obtains that
$$u(t-\omega \varepsilon,x)\le v(t,x), \hbox{ for $t\in (-\infty,T_{\delta}-\omega\varepsilon]$ and $x\in\Omega$.}$$
Again by the comparison principle, the above inequality holds for all $t\in\R$ and $x\in\Omega$. As $\varepsilon\rightarrow 0$, we gets that
$$u(t,x)\le v(t,x) \hbox{ for all $t\in\R$ and $x\in\Omega$}.$$
By the properties of the almost-planar front $u^j_r(t,x)$ and since $v(t,x)$ is decreasing in time, it follows that
$$\sup_{x\in\mathcal{H}_j} u(t,x)\le \sup_{x\in\mathcal{H}_j} v(t,x)<1 \hbox{ for all $t\in\R$}.$$
This completes the proof.
\end{proof}

\section{Large time behavior of entire solutions}
In this section, let $u(t,x)$ be the entire solution emanating from some almost-planar fronts in branches $\mathcal{H}_i$ ($i\in I$), that is, satisfying \eqref{frontlike}. Let $J=\{1,\cdots,m\}\setminus I$ and $J_1$ be a non-empty subset of $J$. We assume that the propagation of $u(t,x)$ is not blocked by branches $\mathcal{H}_j$ for all $j\in J_1$, that is,
\be\label{noblock}
\hbox{$u(t,x)\rightarrow p(x)$ as $t\rightarrow +\infty$ and }\liminf_{x\in\mathcal{H}_i;\ x\cdot e_j\rightarrow +\infty} p(x)=1.
\ee
By Corollary~\ref{cor1}, we immediately get that $c^j_r>0$ for all $j\in J_1$. In the sequel, we investigate the large time behavior of the solution $u(t,x)$ in branches $\mathcal{H}_j$ for $j\in J_1$.

Recall that $\mathcal{H}_j(R):=\{x\in\mathcal{H}_j; x\cdot e_j\ge R\}$ for $R>0$. Let $\lambda_j>0$ and $\psi_j(x)>0$ be the constant and function satisfying Lemma~\ref{lemma-psi} for the extension $\widetilde{\mathcal{H}_j}$ of $\mathcal{H}_j$ and $\beta=\gamma$ where $\gamma$ is defined by \eqref{eq-F}. Then, $v(x):=e^{-\lambda_j(x\cdot e_j-L_1)} \psi_j(x)$ satisfies
\begin{eqnarray}\label{eq-v1}
\left\{\begin{array}{lll}
&-$div$(A(x)\nabla v)+q(x)\cdot \nabla v\ge -\gamma v, \quad & x\in \mathcal{H}_j(L),\\
&\nu A(x)\nabla v\ge 0,& x\in\partial\mathcal{H}_j(L),
\end{array}
\right.
\end{eqnarray}
for any positive constant $L_1$.

\begin{lemma}\label{lemma3.1}
For every $j\in J_1$, there exist $\delta>0$, $\widetilde{\delta}>0$, $L_1\ge L>0$, $t_1\in\R$, $t_2\in\R$, $\tau_1\in\R$ and $\tau_2\in \R$ such that
\be\label{eq-down}
u(t,x)\ge u_r^j(t-t_1+\tau_1,x)-\delta e^{-\delta (t-t_1)}-\widetilde{\delta} e^{- \lambda_j (x\cdot e_j-L_1)} \psi_j(x)
\ee
for $t\ge t_1$ and $x\in\overline{\mathcal{H}_j(L_1)}$ and
\be\label{eq-up}
u(t,x)\le u_r^j(t-t_2+\tau_2,x)+\delta e^{-\delta (t-t_2)}+\widetilde{\delta} e^{- \lambda_j (x\cdot e_j-L_1)}\psi_j(x)
\ee
for $t\ge t_2$ and $x\in\overline{\mathcal{H}_j(L_1)}$.
\end{lemma}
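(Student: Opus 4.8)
The plan is to establish \eqref{eq-down} and \eqref{eq-up} by a sub/supersolution argument in the branch $\mathcal{H}_j$ (with $j\in J_1$), using the no-blocking hypothesis \eqref{noblock} to feed data near $1$ into the branch and the almost-planar front $u^j_r$ facing direction $e_j$ as the profile. The construction mirrors the one in Section~2.1, but runs forward in time and is anchored at the ``mouth'' $x\cdot e_j=L_1$ of the branch rather than at $t\to-\infty$. First I would record that since $c^j_r>0$ (from Corollary~\ref{cor1} under \eqref{noblock}), Lemma~\ref{muir} applies and gives time-monotonicity $(u^j_r)_t>0$ together with a quantitative lower bound $(u^j_r)_t\ge k>0$ on any bounded window $|x\cdot e_j-c^j_r t|\le D$ for $t$ large; this $k$ fixes the constant $\omega$ through an inequality of the form \eqref{eq-omega}. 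I would also fix $\lambda_j>0$ and $\psi_j>0$ from Lemma~\ref{lemma-psi} with $\beta=\gamma$, so that $v(x)=e^{-\lambda_j(x\cdot e_j-L_1)}\psi_j(x)$ obeys \eqref{eq-v1}; and fix $\delta>0$ with $\delta\le\min(\sigma/2,\lambda_j c^j_r)$ and $\widetilde\delta=\delta/\|\psi_j\|_{L^\infty(\overline{\mathcal{H}_j(L_1)})}$, exactly as in Section~2.1.

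For the lower bound \eqref{eq-down}: by \eqref{noblock}, for any prescribed small level there is a time after which $u(t,x)$ is close to $1$ on $\{x\in\mathcal{H}_j: x\cdot e_j\le L_1\}$ for a suitably large $L_1$, so $u$ dominates a small translate of $u^j_r$ on the mouth $x\cdot e_j=L_1$ and dominates it on the cross-section; one then checks that
$$\underline v(t,x):=\max\{u^j_r(\underline\zeta(t),x)-\widetilde\delta\,e^{-\lambda_j(x\cdot e_j-L_1)}\psi_j(x),\,0\},\qquad \underline\zeta(t)=t-t_1+\tau_1-\omega e^{-\delta(t-t_1)},$$
is a subsolution of \eqref{eq1.1} on $\overline{\mathcal{H}_j(L_1)}$ for $t\ge t_1$, with the boundary condition $\nu A\nabla\underline v\le 0$ coming from \eqref{eq-v1} and the Neumann condition for $u^j_r$, exactly the computation of Lemma~\ref{lemma2.1} with signs of the exponential drift reversed because we now push the front to the right for $t\to+\infty$. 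The choice of $\tau_1$ is what guarantees ordering at $t=t_1$ on the mouth (here one uses that $u$ is already near $1$ behind the putative interface and the front localization estimate for $u^j_r$), and the comparison principle on the parabolic cylinder over $\mathcal{H}_j(L_1)$ — with the inflow boundary $x\cdot e_j=L_1$ controlled by the no-blocking bound — yields \eqref{eq-down}. The upper bound \eqref{eq-up} is the symmetric construction with $\overline v_1(t,x)=u^j_r(\overline\zeta(t),x)+\widetilde\delta\,e^{-\lambda_j(x\cdot e_j-L_1)}\psi_j(x)$, $\overline\zeta(t)=t-t_2+\tau_2+\omega e^{-\delta(t-t_2)}$, glued with the constant supersolution near $0$ on the far end (as in $\overline u_2$ of Section~2.1 but now at $x\cdot e_j\to+\infty$), where one uses $f(x,\cdot)<0$ near $0$; here the ordering at the initial time is trivial since $u\le 1$ globally and $u^j_r+$(positive correction) is at least $1$ on the relevant piece, and is $\ge\varepsilon$ at the far end after shifting $\tau_2$.

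The main obstacle will be making the ``mouth'' condition at $x\cdot e_j=L_1$ genuinely usable as a boundary datum for the comparison argument: \eqref{noblock} only gives $\liminf p=1$ as $x\cdot e_j\to+\infty$, so I must first upgrade it, via the convergence $u(t,\cdot)\to p$ and a Harnack/strong-maximum-principle argument together with the uniform boundedness of $|\omega_j(s)|$, to a bound of the type ``for every $\eta>0$ there exist $L_1$ and $T$ with $u(t,x)\ge 1-\eta$ for $t\ge T$, $x\in\overline{\mathcal{H}_j}$, $L\le x\cdot e_j\le L_1$'' — i.e. that the region already invaded by $1$ reaches the cross-section $x\cdot e_j=L_1$ uniformly in the transverse variable; this is where the connectedness of $\mathcal{H}_j\setminus B(0,L)$ and the positivity of $c^j_r$ enter. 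A secondary technical point is the gluing of $\overline v_1$ with the constant state $\varepsilon$ into a global supersolution on $\overline\Omega$ (so that $u\le\overline v$ there), which is handled by the minimum-of-supersolutions principle exactly as at the end of Section~2.1; and one must keep $L_1$ large enough (depending on $\delta,\widetilde\delta,\lambda_j$) that the correction term $\widetilde\delta e^{-\lambda_j(x\cdot e_j-L_1)}\psi_j$ is below the front's ``foot'' at the mouth, so that the maxima/minima in the definitions of $\underline v,\overline v$ are attained by the smooth branch and the pieces match continuously.
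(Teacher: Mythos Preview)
Your overall strategy (forward-in-time sub/supersolutions in the branch, built from $u^j_r$ and the barrier $e^{-\lambda_j(x\cdot e_j-L_1)}\psi_j$) is the right one, but the ansatz has two concrete defects that make the argument fail as written.

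First, the sign on the time-drift is reversed. With $\underline\zeta(t)=t-t_1+\tau_1-\omega e^{-\delta(t-t_1)}$ you get $\underline\zeta'(t)=1+\omega\delta e^{-\delta(t-t_1)}$, so in $N(t,x)$ the contribution $(\underline\zeta'-1)(u^j_r)_t=+\omega\delta e^{-\delta(t-t_1)}(u^j_r)_t$ is \emph{positive}; in the interface window $|x\cdot e_j-c^j_r\underline\zeta|\le M_\delta$ (where $(u^j_r)_t\ge k$ by Lemma~\ref{muir}) this term cannot be absorbed and $N\le 0$ fails. The correct drift is $\underline\zeta_1(t)=t-t_1+\omega e^{-\delta(t-t_1)}-\omega+\tau_1$, so that $\underline\zeta_1'-1=-\omega\delta e^{-\delta(t-t_1)}<0$; the same reversal afflicts your $\overline\zeta$. (Going from Section~2.1 to here one must flip \emph{both} the exponent and the sign in front of $\omega$, since it is the derivative of the drift, not the drift itself, that must keep its sign.)

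Second, and more structurally, your sub/supersolutions omit the additive time-decaying correction $\pm\delta e^{-\delta(t-t_*)}$ that appears in the statement, and this term is precisely what makes the \emph{initial} ordering hold on all of $\overline{\mathcal{H}_j(L_1)}$. At $t=t_1$, far out in the branch $u(t_1,x)$ can be arbitrarily close to $0$ (the front has not arrived there), while your $\underline v(t_1,x)=u^j_r(\tau_1-\omega,x)-\widetilde\delta\,e^{-\lambda_j(x\cdot e_j-L_1)}\psi_j(x)$ is only guaranteed to be $\lesssim\delta$ there (the spatial correction vanishes at infinity and you have no a priori decay rate for $u^j_r$), so $\underline v(t_1,\cdot)\le u(t_1,\cdot)$ fails. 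The paper subtracts $\delta e^{-\delta(t-t_1)}$: then with $\tau_1=(L_1+M_\delta+R)/c^j_r$ one has $u^j_r(\tau_1,x)\le\delta$ for $x\cdot e_j\ge L_1+2M_\delta+R$, and the extra $-\delta$ forces $\underline u\le 0$ there. For the upper bound the paper does \emph{not} glue with a small constant as in Section~2.1; it uses instead that $j\notin I$ together with \eqref{frontlike} to pick $t_2$ so negative that $u(t_2,x)\le\delta$ on all of $\overline{\mathcal{H}_j(L_1)}$, after which $\overline u(t_2,x)\ge\delta\ge u(t_2,x)$ is immediate from the $+\delta e^{-\delta(t-t_2)}$ term. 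Your ``ordering is trivial since $u\le1$'' only covers the region behind the front of $\overline v_1$, and the gluing you propose is unnecessary. Finally, the ``upgrade'' of \eqref{noblock} to $u\ge1-\eta$ on the whole strip $L\le x\cdot e_j\le L_1$ is not needed and does not obviously follow from the hypotheses: the paper only uses $u(t,x)\ge1-\delta'$ on the finite band $L_1\le x\cdot e_j\le L_1+2M_\delta+R$ (hence also at the single cross-section $x\cdot e_j=L_1$), which is a direct consequence of $u\to p$ locally uniformly and $\liminf_{x\cdot e_j\to+\infty}p=1$.
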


\begin{proof}
{\it Step 1: some parameters.} Fix any $j\in J_1$. Let $\delta>0$ be a constant such that
\be\label{eq-delta}
\delta\le \min_{j\in J_1}\Big(\lambda_j c^j_r,\gamma,\frac{\sigma}{3}\Big),
\ee
where $\gamma$ and $\sigma$ are defined by \eqref{eq-F}.
Define 
$$\widetilde{\delta}:=\frac{\delta}{\min_{j\in J_1}\|\psi_j\|_{L^{\infty}(\overline{\mathcal{H}_j(L)})}} \hbox{ and } \delta'=\widetilde{\delta}\min_{j\in J_1}\inf_{\overline{\mathcal{H}_j(L)}} \psi_j(x).$$ 
Since $u^j_r(t,x)$ is an almost-planar front defined by Assumption~\ref{assumption-r}, it follows from Definition~\ref{TF} and \eqref{disO} that there is $M_{\delta}>0$ such that
 \begin{eqnarray}\label{Md-s3}
\left\{\begin{array}{lll}
u^j_r(t,x)\ge 1-\delta, && \hbox{ for $t\in\R$ and $x\in\widetilde{\mathcal{H}}_j$ such that $x\cdot e_j-c^i_r t\le -M_{\delta}$},\\
u^j_r(t,x)\le \delta, && \hbox{ for $t\in\R$ and $x\in\widetilde{\mathcal{H}}_j$ such that $x\cdot e_j-c^i_r t\ge M_{\delta}$}.
\end{array}
\right.
\end{eqnarray}
Since $c^j_r>0$ and by Lemma~\ref{muir}, one has that $(u^j_r)_t(t,x)>0$ and there exist $T_2>0$ and $k>0$ such that 
\be\label{eq-k2}
(u^j_r)_t(t,x)\ge k \hbox{ for $t\ge T_2$ and $x\in\widetilde{\mathcal{H}}_j$ such that $-M_{\delta}\le x\cdot e_j-c^j_r t\le M_{\delta}$}.
\ee 
Remember that $\widetilde{\mathcal{H}}_j=\mathcal{H}_j$ for $x\cdot e_j\ge L$. Let $\omega>0$ such that
\be\label{eq-omega1}
k\omega\ge \delta+\gamma+2M,
\ee
where $M=\sup_{x\in\R^N, u\in[0,1]}|f_u(x,u)|$.
By \eqref{noblock}, there are $t_1\in\R$ and $L_1\ge L$ such that
\be\label{eq-hL}
u(t,x)\ge 1-\delta'\ge 1-\delta,\hbox{  for $t\ge t_1$ and $x\in\overline{\mathcal{H}_j}$ such that $L_1\le x\cdot e_j\le L_1+2M_{\delta}+R$},
\ee
where $R$ is a fixed constant such that $R\ge c^j_r\omega$. Even if it means increasing $L_1$, assume that $L_1/c^j_r\ge T_2$.

{\it Step 2: proof of \eqref{eq-down}.} For $t\ge t_1$ and $x\in\overline{\mathcal{H}_j(L_1)}$, we set
$$\underline{u}(t,x)=\max\Big(u^j_r(\underline{\zeta}_1(t),x) -\delta e^{-\delta (t-t_1)} -\widetilde{\delta} e^{-\lambda_j(x\cdot e_j-L_1)}\psi_j(x),0\Big),$$
where $\underline{\zeta}_1(t)=t-t_1+\omega e^{-\delta (t-t_1)} -\omega+\tau_1$ and $\tau_1=(L_1+M_{\delta}+R)/ c^j_{r}$. Notice that $\underline{\zeta}_1(t)\ge T_2$ for all $t\ge t_1$.
We prove that $\underline{u}(t,x)$ is a subsolution of \eqref{eq1.1} for $t\ge t_1$ and $x\in\overline{\mathcal{H}_j(L)}$.

At the time $t=t_1$, it follows from \eqref{eq-hL} that
$$\underline{u}(t_1,x)\le \max\Big(1-\delta-\widetilde{\delta} e^{-\lambda_j(x\cdot e_j-L_1)}\psi_j(x),0\Big)\le 1-\delta\le u(t_1,x),$$
for $x\in\overline{\mathcal{H}_j}$ such that $L_1\le x\cdot e_j\le L_1+2M_{\delta}+R$. Since $c^j_r\underline{\zeta}_1(t)\ge L_1\ge L$ for $t\ge t_1$,  the interfaces $\Gamma_{\underline{\zeta}_1}$ of $u^j_r(\underline{\zeta}_1(t),x)$ for $t\ge t_1$ are defined by $\{x\in\mathcal{H}_j; x\cdot e_j=c^j_r \underline{\zeta}_1(t)\}$. For $x\in\overline{\mathcal{H}_j(L_1+2M_{\delta}+R)}$, one has that $x\cdot e_j-c^j_r \tau_1\ge M_{\delta}$. Then, by \eqref{Md-s3},
$$\underline{u}(t_1,x)\le \max\Big(\delta-\delta-\widetilde{\delta} e^{-\lambda_j(x\cdot e_j-L_1)}\psi_j(x),0\Big)=0\le u(t_1,x),$$
for $x\in\overline{\mathcal{H}_j(L_1+2M_{\delta}+R)}$. Thus,
$$\underline{u}(t_1,x)\le u(t_1,x), \hbox{ for all $x\in\overline{\mathcal{H}_j(L_1)}$}.$$
Since $\nu A(x)\nabla u^j_r(t,x)=0$ for $x\in\partial\mathcal{H}_j(L)$ and by \eqref{eq-v1}, one can notice that $\underline{u}(t,x)$ satisfies $\nu A(x)\nabla\underline{u}(t,x)\le 0$ for $t\ge t_1$ and $x\in\partial\mathcal{H}_j(L_1)$. For $x\in \overline{\mathcal{H}_j}$ such that $x\cdot e_j=L_1$, one has $\underline{u}(t,x)\le 1-\widetilde{\delta}\psi_j(x)\le 1-\delta'\le u(t,x)$ for all $t\ge t_1$.

Now let us check that
$$N(t,x):=\underline{u}_t-\hbox{div}(A(x)\nabla\underline{u})+q(x)\cdot \nabla\underline{u}-f(x,\underline{u})\le 0,$$
for $t\ge t_1$ and $x\in \overline{\mathcal{H}_j(L_1)}$ such that $\underline{u}(t,x)>0$. Notice that $u^j_r(t,x)$ satisfies \eqref{eq1.1} for $x\in\overline{\mathcal{H}_j(L_1)}$. After some calculation, it follows from \eqref{eq-v1} that
\begin{align*}
N(t,x)\le& -\omega\delta e^{-\delta(t-t_1)} (u^j_r)_t(\underline{\zeta}_1(t),x) +\delta^2 e^{-\delta (t-t_1)}+\gamma\widetilde{\delta} e^{-\lambda_j(x\cdot e_j-L_1)} \psi_j(x)\\ &+f(x,u^j_r(\underline{\zeta}_1(t),x))-f(x,\underline{u}(t,x)).
\end{align*}
For $t\ge t_1$ and $x\in \overline{\mathcal{H}_j(L_1)}$ such that $x\cdot e_j-c^j_r\underline{\zeta}_1(t)\le -M_{\delta}$, one has that $u^j_r(\underline{\zeta}_1(t),x)\ge 1-\delta$ and hence $\underline{u}(t,x)\ge 1-3\delta\ge 1-\sigma$. Thus, by \eqref{eq-F},
$$f(x,u^j_r(\underline{\zeta}_1(t),x))-f(x,\underline{u}(t,x))\le -\gamma\Big(\delta e^{-\delta(t-t_1)}+\widetilde{\delta} e^{-\lambda_j(x\cdot e_j-L_1)}\psi_j(x)\Big).$$
It follows that
\begin{align*}
N(t,x)\le& -\omega\delta e^{-\delta(t-t_1)} (u^j_r)_t(\underline{\zeta}_1(t),x) +\delta (\delta-\gamma) e^{-\delta (t-t_1)}+(\gamma-\gamma)\widetilde{\delta} e^{-\lambda_j(x\cdot e_j-L_1)} \psi_j(x)\le 0,
\end{align*}
 by$(u^j_r)_t>0$ and \eqref{eq-delta}. For $t\ge t_1$ and $x\in \overline{\mathcal{H}_j(L_1)}$ such that $x\cdot e_j-c^j_r \underline{\zeta}_1(t)\ge M_{\delta}$, one has that $u^j_r(\underline{\zeta}_1(t),x)\le \delta$ and hence $\underline{u}(t,x)\le \delta\le \sigma$. Thus, by \eqref{eq-F},
$$f(x,u^j_r(\underline{\zeta}_1(t),x))-f(x,\underline{u}(t,x))\le -\gamma\Big(\delta e^{-\delta(t-t_1)}+\widetilde{\delta} e^{-\lambda_j(x\cdot e_j-L_1)}\psi_j(x)\Big).$$
It follows that
\begin{align*}
N(t,x)\le& -\omega\delta e^{-\delta(t-t_1)} (u^j_r)_t(\underline{\zeta}_1(t),x) +\delta (\delta-\gamma) e^{-\delta (t-t_1)}+(\gamma-\gamma)\widetilde{\delta} e^{-\lambda_j(x\cdot e_j-L_1)} \psi_j(x)\le 0,
\end{align*}
by $(u^j_r)_t>0$ and \eqref{eq-delta}. Finally, for $t\ge t_1$ and $x\in \overline{\mathcal{H}_j(L)}$ such that $-M_{\delta}\le x\cdot e_j-c^j_r\underline{\zeta}_1(t)\le M_{\delta}$, one has that $x\cdot e_j\ge c^j_r\underline{\zeta}_1(t)-M_{\delta}\ge c^j_r(t-t_1)+L_1$ and $(u^j_r)_t(\underline{\zeta}_1(t),x)\ge k>0$ by \eqref{eq-k2}. Then, $\widetilde{\delta}e^{-\lambda_j(x\cdot e_j-L_1)}\psi_j(x)\le \delta e^{-\lambda_j c^j_r (t-t_1)} $.  It is obvious that
$$f(x,u^j_r(\underline{\zeta}_1(t),x))-f(x,\underline{u}(t,x))\le M\Big(\delta e^{-\delta(t-t_1)}+\widetilde{\delta} e^{-\lambda_j(x\cdot e_j-L_1)}\psi_j(x)\Big),$$
where $M=\sup_{x\in\R^N, u\in[0,1]}|f_u(x,u)|$. Thus, by \eqref{eq-delta} and \eqref{eq-omega1}, it follows that
\begin{align*}
N(t,x)\le& -k\omega\delta e^{-\delta(t-t_1)} +\delta (\delta+M) e^{-\delta (t-t_1)}+(\gamma+M)\widetilde{\delta} e^{-\lambda_j(x\cdot e_j-L_1)} \psi_j(x)\\
\le&  -k\omega\delta e^{-\delta(t-t_1)} +\delta (\delta+M) e^{-\delta (t-t_1)}+(\beta+M)\delta e^{-\lambda_jc^j_r(t-t_1)} \le 0.
\end{align*}

By the comparison principle, one obtains that
$$u(t,x)\ge \underline{u}(t,x)\ge u^j_r(t-t_1 -\omega+\tau_1,x) -\delta e^{-\delta (t-t_1)} -\widetilde{\delta} e^{-\lambda_j(x\cdot e_j-L_1)}\psi_j(x),$$
for $t\ge t_1$ and $x\in\overline{\mathcal{H}_j(L_1)}$.

{\it Step 3: proof of \eqref{eq-up}.}
Since $u(t,x)$ satisfies \eqref{frontlike} and $j\not\in I$, there is $t_2\in\R$ such that
$$u(t_2,x)\le \delta,\hbox{ for $x\in\overline{\mathcal{H}_j(L_1)}$}.$$
For $t\ge t_2$ and $x\in\overline{\mathcal{H}_j(L_1)}$, let us set
$$\overline{u}(t,x)=\min\Big(u^j_r(\overline{\zeta}_2(t),x)+\delta e^{-\delta (t-t_2)}+\widetilde{\delta} e^{- \lambda_j (x\cdot e_j-L_1)}\psi_j(x),1\Big),$$
where $\overline{\zeta}_2(t)=t-t_2-\omega e^{-\delta (t-t_2)}+\omega+\tau_2$, $\tau_2=(L_1+M_{\delta'})/c^j_r$ and $M_{\delta'}$ is defined by \eqref{Md-s3} with $\delta$ replacing by $\delta'$.
We prove that $\overline{u}(t,x)$ is a supersolution of \eqref{eq1.1} for $t\ge t_2$ and $x\in\overline{\mathcal{H}_j(L_1)}$.

At the time $t=t_2$, one has that
$$\overline{u}(t_2,x)\ge \delta\ge u(t_2,x), \hbox{ for all $x\in\overline{\mathcal{H}_j(L_1)}$}.$$
Notice that $\overline{u}(t,x)$ satisfies $\nu A(x)\nabla\overline{u}\ge 0$ for $t\ge t_2$ and $x\in\partial\mathcal{H}_j(L_1)$. By the definition of $\tau_2$, one has that $x\cdot e_j-c^j_r\overline{\zeta}_2(t)\le L_1-c^j_r \tau_2\le -M_{\delta'}$ for $x\in\overline{\mathcal{H}_j}$ such that $x\cdot e_j=L_1$. Then, by \eqref{Md-s3}, $\overline{u}(t,x)\ge 1-\delta'+\widetilde{\delta}\psi_j(x)\ge 1\ge u(t,x)$ for all $t\ge t_2$ and $x\in \overline{\mathcal{H}_j}$ such that $x\cdot e_j= L_1$.

Then, one can do the similar arguments as in Step~2 to prove that
$$N(t,x):=\overline{u}_t-\hbox{div}(A(x)\nabla\overline{u})+q(x)\cdot \nabla\overline{u}-f(x,\overline{u})\ge 0,$$
for $t\ge t_2$ and $x\in \overline{\mathcal{H}_j(L_1)}$ such that $\overline{u}(t,x)<1$. By the comparison principle, one obtains that
$$u(t,x)\le \overline{u}(t,x)\le u^j_r(t-t_2+\omega +\tau_2,x) +\delta e^{-\delta (t-t_2)} +\widetilde{\delta} e^{-\lambda_j(x\cdot e_j-L_1)}\psi_j(x),$$
for $t\ge t_2$ and $x\in\overline{\mathcal{H}_j(L_1)}$.
\end{proof}
\vskip 0.3cm

By Lemma~\ref{lemma3.1}, one can actually get the following lemma.

\begin{lemma}\label{lemma3.2}
For every $j\in J_1$ and any $\varepsilon>0$, there exist $L_{\varepsilon}>0$, $t_{\varepsilon}\in\R$ and $\tau_{\varepsilon}\in\R$ such that
$$u(t,x)\ge u^j_r(t-t_{\varepsilon}+\tau_{\varepsilon},x)-\varepsilon e^{-\delta (t-t_{\varepsilon})}-\widetilde{\varepsilon}  e^{- \lambda_j (x\cdot e_j-L_{\varepsilon})}\psi_j(x),$$
for all $t\ge t_{\varepsilon}$ and $x\in\overline{\mathcal{H}_j(L_{\varepsilon})}$, where $\widetilde{\varepsilon}=\varepsilon/\|\psi_j\|_{L^{\infty}(\overline{\mathcal{H}_j(L)})}$ and  $\delta>0$ is defined as in Lemma~$\ref{lemma3.1}$.
\end{lemma}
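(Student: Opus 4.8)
\noindent\textit{Proof strategy.} My plan is to recycle the subsolution of Step~2 of the proof of Lemma~\ref{lemma3.1}, only with a smaller amplitude. The case $\varepsilon\ge\delta$ is immediate from Lemma~\ref{lemma3.1}: the two error terms in \eqref{eq-down} shrink when $\delta$, $\widetilde\delta$, $\lambda_j$ are kept and the prefactors are enlarged to $\varepsilon$, $\widetilde\varepsilon$, so one takes $L_\varepsilon=L_1$, $t_\varepsilon=t_1$, $\tau_\varepsilon=\tau_1-\omega$. Assume then $0<\varepsilon<\delta$, and keep $\delta$, $\widetilde\delta$, $\omega$, $M_\delta$, $k$, $T_2$, $R$, $t_1$, $\tau_1$, $L_1$ as in the proof of Lemma~\ref{lemma3.1}. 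Setting $\widetilde\varepsilon=\varepsilon/\|\psi_j\|_{L^{\infty}(\overline{\mathcal{H}_j(L)})}$ and $\underline\zeta(t)=t-t_\varepsilon+\omega e^{-\delta(t-t_\varepsilon)}-\omega+\tau_\varepsilon$, I would introduce
\be
\underline u(t,x)=\max\Big(u^j_r(\underline\zeta(t),x)-\varepsilon e^{-\delta(t-t_\varepsilon)}-\widetilde\varepsilon e^{-\lambda_j(x\cdot e_j-L_\varepsilon)}\psi_j(x),\ 0\Big)
\ee
for $t\ge t_\varepsilon$ and $x\in\overline{\mathcal{H}_j(L_\varepsilon)}$, with $L_\varepsilon\ge L_1$, $\tau_\varepsilon$, $t_\varepsilon$ to be chosen below, and conclude by the comparison principle on $\overline{\mathcal{H}_j(L_\varepsilon)}$ (Neumann on the lateral part of $\partial\mathcal{H}_j(L_\varepsilon)$, Dirichlet-type on the cap $\{x\cdot e_j=L_\varepsilon\}$).

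\medskip
The differential inequality $\underline u_t-\mathrm{div}(A(x)\nabla\underline u)+q(x)\cdot\nabla\underline u-f(x,\underline u)\le0$ on $\{\underline u>0\}$ should be verbatim the one checked in Step~2 of Lemma~\ref{lemma3.1}, because every term now scales linearly in $\varepsilon$. Using that $u^j_r$ solves \eqref{eq1.1} on $\mathcal{H}_j(L_\varepsilon)$ and that $v=e^{-\lambda_j(x\cdot e_j-L_\varepsilon)}\psi_j(x)$ satisfies \eqref{eq-v1}, split according to whether $x\cdot e_j-c^j_r\underline\zeta(t)\le-M_\delta$ (then $\underline u\ge1-3\delta\ge1-\sigma$ and \eqref{eq-F} supplies the needed $-\gamma$), $\ge M_\delta$ (then $0<\underline u\le\delta\le\sigma$, again \eqref{eq-F}), or lies in $[-M_\delta,M_\delta]$; on this transition band $(u^j_r)_t\ge k$ by \eqref{eq-k2} — one has $\underline\zeta(t)\ge(L_\varepsilon+M_\delta)/c^j_r\ge T_2$ once $L_\varepsilon\ge c^j_rT_2$, and $x\cdot e_j\ge c^j_r(t-t_\varepsilon)+L_\varepsilon$, so $\widetilde\varepsilon e^{-\lambda_j(x\cdot e_j-L_\varepsilon)}\psi_j(x)\le\varepsilon e^{-\delta(t-t_\varepsilon)}$ by $\delta\le\lambda_jc^j_r$ in \eqref{eq-delta}. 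On the band $|f(x,u^j_r)-f(x,\underline u)|\le M\big(\varepsilon e^{-\delta(t-t_\varepsilon)}+\widetilde\varepsilon e^{-\lambda_j(x\cdot e_j-L_\varepsilon)}\psi_j(x)\big)$, so the left side is $\le\big(-k\omega\delta+\varepsilon(\delta+\gamma+2M)\big)e^{-\delta(t-t_\varepsilon)}\le0$, the last inequality by $\varepsilon\le\delta$ and $k\omega\ge\delta+\gamma+2M$ from \eqref{eq-omega1}. Thus the very $\omega$ of Lemma~\ref{lemma3.1} still works.

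\medskip
The real work is picking the three parameters, in the order $L_\varepsilon$, $\tau_\varepsilon$, $t_\varepsilon$. First take $L_\varepsilon\ge L_1$ with $L_\varepsilon\ge c^j_rT_2+M_\delta$ and, using $\liminf_{x\cdot e_j\to+\infty}p(x)=1$ from the no-blocking hypothesis \eqref{noblock}, with $p\ge1-\tfrac12\min\big(\varepsilon,\widetilde\varepsilon\inf_{\overline{\mathcal{H}_j(L)}}\psi_j\big)$ on $\{x\in\overline{\mathcal{H}_j}:x\cdot e_j\ge L_\varepsilon\}$. Then set $\tau_\varepsilon=(L_\varepsilon+M_\delta+R)/c^j_r$ with $R\ge c^j_r\omega$, precisely the choice of $\tau_1$ in Lemma~\ref{lemma3.1}: this places the interface of $u^j_r(\underline\zeta(t),\cdot)$ to the right of $L_\varepsilon$ (which is what the band estimate above needs) and gives $u^j_r(\tau_\varepsilon,\cdot)\le\varepsilon$ for $x\cdot e_j\ge c^j_r\tau_\varepsilon+M_\varepsilon$ (with $M_\varepsilon$ from \eqref{Md-s3} with $\delta$ replaced by $\varepsilon$), whence $\underline u(t_\varepsilon,\cdot)=0$ there. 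Finally, as $u(t,\cdot)\to p$ locally uniformly, pick $t_\varepsilon\ge t_1$ so large that $u(t,x)\ge1-\widetilde\varepsilon\inf_{\overline{\mathcal{H}_j(L)}}\psi_j$ holds on the bounded cap $\{x\cdot e_j=L_\varepsilon\}\cap\overline{\mathcal{H}_j}$ for \emph{all} $t\ge t_\varepsilon$, and $u(t_\varepsilon,x)\ge1-\varepsilon$ on the bounded set $\{L_\varepsilon\le x\cdot e_j\le c^j_r\tau_\varepsilon+M_\varepsilon\}\cap\overline{\mathcal{H}_j}$. The first gives $\underline u(t,x)\le1-\widetilde\varepsilon\psi_j(x)\le u(t,x)$ on the cap for $t\ge t_\varepsilon$; the second, together with $u^j_r(\tau_\varepsilon,\cdot)\le\varepsilon$ beyond $c^j_r\tau_\varepsilon+M_\varepsilon$, gives $\underline u(t_\varepsilon,\cdot)\le u(t_\varepsilon,\cdot)$ on all of $\overline{\mathcal{H}_j(L_\varepsilon)}$; and $\nu A(x)\nabla\underline u\le0$ for $x\in\partial\mathcal{H}_j(L_\varepsilon)$ follows from $\nu A(x)\nabla u^j_r=0$ and \eqref{eq-v1}. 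The comparison principle then yields $u\ge\underline u$, and since $\underline\zeta(t)\ge t-t_\varepsilon+\tau_\varepsilon-\omega$ and $(u^j_r)_t>0$ this is the asserted bound after renaming $\tau_\varepsilon-\omega$ as $\tau_\varepsilon$.

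\medskip
I expect the main obstacle to be precisely the bookkeeping of this last paragraph rather than the differential inequality (which is a rescaled copy of one already done): one must check the three demands are simultaneously met — $L_\varepsilon$ large enough to serve both \eqref{eq-k2} and the $\liminf p$ bound, the single shift $\tau_\varepsilon\sim L_\varepsilon/c^j_r$ both driving the band estimate and forcing $\underline u(t_\varepsilon,\cdot)\equiv0$ far out in the branch, and $t_\varepsilon$ large enough to control $u$ from below on the bounded region $\{L_\varepsilon\le x\cdot e_j\le c^j_r\tau_\varepsilon+M_\varepsilon\}$ and on the cap $\{x\cdot e_j=L_\varepsilon\}$ for every later time. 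That last uniform-in-time lower bound on $u$ near the cap is the only place where the no-blocking hypothesis \eqref{noblock} genuinely enters.
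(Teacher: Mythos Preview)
Your proposal is correct and follows essentially the same route as the paper: rescale the subsolution of Step~2 in Lemma~\ref{lemma3.1} with the amplitude $\delta$ replaced by $\varepsilon$, choose $L_\varepsilon$ and $t_\varepsilon$ from the no-blocking hypothesis~\eqref{noblock}, and set $\tau_\varepsilon\sim L_\varepsilon/c^j_r$ exactly as $\tau_1$ was chosen. The only cosmetic difference is that the paper also rescales the time-shift to $\hat\varepsilon\omega$ with $\hat\varepsilon=\varepsilon/\delta$ (so that every term in the differential inequality carries the common factor $\hat\varepsilon$), whereas you keep $\omega$ and close the transition-band estimate via $\varepsilon\le\delta$; both choices work.
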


\begin{proof}
For any $\varepsilon>0$, let $\hat{\varepsilon}:=\varepsilon/\delta$ and $\varepsilon':=\hat{\varepsilon}\widetilde{\delta}\inf_{\overline{\mathcal{H}_j(L)}}\psi_j(x)\le \varepsilon$.
If $\varepsilon\ge \delta$, then it follows from Lemma~\ref{lemma3.1} that the conclusion of Lemma~\ref{lemma3.2} holds for $t_{\varepsilon}=t_2$ as in Lemma~\ref{lemma3.1}. Now we consider $0<\varepsilon<\delta$. By \eqref{noblock},  there are $t_{\varepsilon}>0$ and $L_{\varepsilon}$ such that for $t\ge t_{\varepsilon}$,
$$u(t,x)\ge 1-\varepsilon'\ge 1-\varepsilon, \hbox{ for $x\in\overline{\mathcal{H}_j}$ such that $L_{\varepsilon}\le x\cdot e_j\le L_{\varepsilon}+2M_{\varepsilon}$+R},$$
where $R$ is a fixed constant such that $R\ge \hat{\varepsilon}c^j_r \omega$.
Then, as the proof for Lemma~\ref{lemma3.1}, one can show that the following function
$$\underline{u}(t,x)=\max\Big(u^j_r(t-t_{\varepsilon}+\hat{\varepsilon}\omega e^{-\delta (t-t_{\varepsilon})} -\hat{\varepsilon}\omega+\tau_{\varepsilon},x) -\hat{\varepsilon}\delta e^{-\delta (t-t_{\varepsilon})} -\hat{\varepsilon}\widetilde{\delta} e^{-\lambda_j(x\cdot e_j-L_{\varepsilon})}\psi_j(x),0\Big),$$
where $\tau_{\varepsilon}=(L_{\varepsilon}+M_{\varepsilon}+R)/ c^j_{r}$ is a subsolution of the problem satisfied by $u(t,x)$ for $t\ge t_{\varepsilon}$ and $x\in\overline{\mathcal{H}_j(L_{\varepsilon})}$. Then, the conclusion follows from the comparison principle.
\end{proof}
\vskip 0.3cm

As soon as Lemma \ref{lemma3.1} and Lemma \ref{lemma3.2} provided, one can get the next lemma about the local stability of the almost-planar front in the branch $\mathcal{H}_j$ for $j\in J_1$.

\begin{lemma}\label{lemma3.3}
There is $N\ge0$ such that, if there are $L_1\ge L$, $j\in J_1$, $\varepsilon>0$, $t_0\in\R$ and $\tau\in\R$ such that
$$\sup_{x\in\overline{\mathcal{H}_j(L_1)}}|u(t_0,x)-u^j_r(t_0+\tau,x)|\le \varepsilon$$
together with $t_0$ being sufficiently large such that $u^j_r(t+\tau,x)\ge 1-\epsilon$ and $u(t,x)\ge1-\epsilon$ for all $t\ge t_0$ and $x\in\overline{\mathcal{H}_j}$ with $x\cdot e_j=L_1$, then it holds
$$\sup_{x\in\overline{\mathcal{H}_j(L_1)}} |u(t,x)-u^j_r(t+\tau,x)|\le N\,\varepsilon\ \ \text{for all $t\ge t_0$}.$$
\end{lemma}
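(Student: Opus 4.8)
The plan is to build a supersolution $\overline{u}$ and a subsolution $\underline{u}$ of \eqref{eq1.1} on the half-strip $[t_0,+\infty)\times\overline{\mathcal{H}_j(L_1)}$ of the same shape as in Lemma~\ref{lemma3.1}, but with all perturbation amplitudes proportional to $\varepsilon$ and with $t_0$ playing the role of $t_1,t_2$, so that the errors which accumulate are linear in $\varepsilon$ with a constant that depends only on quantities fixed once $j\in J_1$ is fixed. Concretely, with $\delta,\lambda_j,\psi_j$ as in Lemma~\ref{lemma3.1} (so \eqref{eq-v1} holds), set $\widetilde{\varepsilon}:=\varepsilon/\inf_{\overline{\mathcal{H}_j(L)}}\psi_j$ and, for a constant $\omega>0$ independent of $\varepsilon,L_1,t_0,\tau$ to be fixed below,
\[
\overline{u}(t,x)=\min\Big(u^j_r\big(t+\tau+\omega\varepsilon(1-e^{-\delta(t-t_0)}),x\big)+\varepsilon e^{-\delta(t-t_0)}+\widetilde{\varepsilon}\,e^{-\lambda_j(x\cdot e_j-L_1)}\psi_j(x),\ 1\Big),
\]
\[
\underline{u}(t,x)=\max\Big(u^j_r\big(t+\tau-\omega\varepsilon(1-e^{-\delta(t-t_0)}),x\big)-\varepsilon e^{-\delta(t-t_0)}-\widetilde{\varepsilon}\,e^{-\lambda_j(x\cdot e_j-L_1)}\psi_j(x),\ 0\Big).
\]
Since $|u-u^j_r|\le1$ always, one may assume $\varepsilon<\varepsilon_0$ for any fixed threshold $\varepsilon_0$ (for $\varepsilon\ge\varepsilon_0$ the conclusion is trivial as soon as $N\ge1/\varepsilon_0$), in particular $\varepsilon$ small enough that $\varepsilon+\widetilde{\varepsilon}\|\psi_j\|_{L^\infty}\le\sigma-\delta$.

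Next I would verify that $\overline{u}$ (resp.\ $\underline{u}$) is a supersolution (resp.\ subsolution) dominating (resp.\ dominated by) $u$ on the parabolic boundary of the strip. The interior inequality $\overline{u}_t-\mathrm{div}(A\nabla\overline{u})+q\cdot\nabla\overline{u}-f(x,\overline{u})\ge0$ on $\{\overline{u}<1\}$ is checked exactly as in Steps~2--3 of Lemma~\ref{lemma3.1}, splitting $\overline{\mathcal{H}_j(L_1)}$ according to whether $x\cdot e_j-c^j_r$ (at the shifted time) lies $\le-M_\delta$, $\ge M_\delta$, or in between: in the first two ranges $u^j_r(\text{shifted},\cdot)$ and $\overline u$ both lie in $[1-\sigma,1]$ or both in $[0,\sigma]$, and the monotonicity of $f$ near $0$ and $1$ together with \eqref{eq-F} (used just as in Lemma~\ref{lemma3.1}) absorbs the $\gamma\widetilde{\varepsilon}e^{-\lambda_j(x\cdot e_j-L_1)}\psi_j$ term from \eqref{eq-v1} and the $\delta\varepsilon e^{-\delta(t-t_0)}$ term, using $\delta\le\gamma$ and $(u^j_r)_t>0$ (Lemma~\ref{muir}); in the middle range Lemma~\ref{muir} gives $(u^j_r)_t\ge k>0$, the location $x\cdot e_j\ge c^j_rt+L_1-\mathrm{const}$ yields $\widetilde{\varepsilon}e^{-\lambda_j(x\cdot e_j-L_1)}\psi_j\le\widetilde{\varepsilon}\|\psi_j\|_{L^\infty}e^{-\lambda_jc^j_r(t-t_0)}\le\widetilde{\varepsilon}\|\psi_j\|_{L^\infty}e^{-\delta(t-t_0)}$ since $\delta\le\lambda_jc^j_r$, and then choosing $\omega$ with $\omega\delta k\ge\delta+M+(\gamma+M)\|\psi_j\|_{L^\infty}/\inf\psi_j$ for every $j\in J_1$ closes the estimate. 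The Neumann condition on the lateral part of $\partial\mathcal{H}_j(L_1)\subset\partial\Omega$ holds because $u^j_r$ satisfies it and by the sign in the second line of \eqref{eq-v1}. On the cross-section $\{x\cdot e_j=L_1\}$ I use the hypotheses: at $t=t_0$ the closeness gives $u(t_0,x)\le u^j_r(t_0+\tau,x)+\varepsilon\le\overline{u}(t_0,x)$, and for $t\ge t_0$, since $u^j_r(t+\tau,x)\ge1-\varepsilon$, since $u^j_r$ is increasing in time, since $\widetilde{\varepsilon}\psi_j(x)\ge\widetilde{\varepsilon}\inf\psi_j=\varepsilon$, and since $u\le1$, one gets $\overline{u}(t,x)\ge(1-\varepsilon)+\varepsilon=1\ge u(t,x)$; the symmetric estimates with $u(t,x)\ge1-\varepsilon$ handle $\underline{u}$. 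At $t=t_0$ the closeness hypothesis gives $\underline{u}(t_0,\cdot)\le u(t_0,\cdot)\le\overline{u}(t_0,\cdot)$ on all of $\overline{\mathcal{H}_j(L_1)}$. The comparison principle — applied on truncations $[t_0,T]\times(\overline{\mathcal{H}_j(L_1)}\cap\{x\cdot e_j\le R\})$ and passed to the limit $R,T\to+\infty$ using the exponential spatial decay of the weight and the uniform boundedness of all functions — then gives $\underline{u}\le u\le\overline{u}$ throughout the strip.

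Finally I would read off the bound from $\underline{u}\le u\le\overline{u}$. Where $\overline{u}$ is not truncated, the time shift being at most $\omega\varepsilon$ and $\|(u^j_r)_t\|_{L^\infty}<\infty$ by parabolic regularity give $u(t,x)-u^j_r(t+\tau,x)\le\big(\omega\|(u^j_r)_t\|_{L^\infty}+1+\|\psi_j\|_{L^\infty}/\inf\psi_j\big)\varepsilon=:N_0\varepsilon$; where $\overline{u}\equiv1$, the formula value being $\ge1$ forces $u^j_r(\text{shifted},x)\ge1-\varepsilon-\widetilde{\varepsilon}\|\psi_j\|_{L^\infty}$, hence (time-monotonicity again) $u^j_r(t+\tau,x)\ge1-N_0\varepsilon$, and as $u\le1$ we again obtain $u(t,x)-u^j_r(t+\tau,x)\le N_0\varepsilon$. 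A symmetric argument with $\underline{u}$ bounds $u^j_r(t+\tau,x)-u(t,x)$ by $N_0\varepsilon$. Since $J_1$ is finite and $N_0=N_0(j)$ depends only on $\omega,\|(u^j_r)_t\|_{L^\infty},\|\psi_j\|_{L^\infty},\inf\psi_j$, setting $N:=\max\big(1/\varepsilon_0,\ \max_{j\in J_1}N_0(j)\big)$ yields a universal constant and the claimed inequality. (One also needs $t_0$ large enough that the shifted times stay $\ge T_2$ and the middle-range weight estimate above holds; this is consistent with the "$t_0$ sufficiently large" hypothesis.)

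The interior parabolic computation is a routine re-run of Lemma~\ref{lemma3.1}; the real work is the bookkeeping of constants — ensuring $\omega$, and hence $N$, can be chosen independently of $\varepsilon,L_1,t_0,\tau$ — and, above all, handling the truncation near $u=1$: on the set where $\overline{u}\equiv1$ one must still recover $|u-u^j_r(t+\tau,\cdot)|\le N\varepsilon$, and this is precisely where the time-monotonicity of $u^j_r$ and the $L^\infty$ bound on $(u^j_r)_t$ are indispensable. The only other delicate point is the validity of the comparison principle on the $e_j$-unbounded domain $\mathcal{H}_j(L_1)$, dispatched by the usual truncation-and-limit argument together with the exponential decay of the weight term.
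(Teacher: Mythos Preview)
Your proposal is correct and follows essentially the same strategy as the paper: rescale the sub/supersolutions of Lemma~\ref{lemma3.1} so that all perturbation amplitudes are of order $\varepsilon$, apply the comparison principle on $[t_0,\infty)\times\overline{\mathcal{H}_j(L_1)}$, and convert the resulting time shift of size $O(\varepsilon)$ into an $L^\infty$ bound via $\|(u^j_r)_t\|_{L^\infty}$. The paper does this by writing $\hat\varepsilon=\varepsilon/\delta$ and literally multiplying the perturbation terms of Lemma~\ref{lemma3.1} by $\hat\varepsilon$, arriving at $N=\max_j(\omega\|(u^j_r)_t\|_{L^\infty}/\delta+2)$; your parametrization (time shift $\omega\varepsilon$, spatial weight $\widetilde\varepsilon=\varepsilon/\inf\psi_j$, and a correspondingly larger $\omega$) is equivalent up to renaming constants. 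Your choice $\widetilde\varepsilon=\varepsilon/\inf\psi_j$ is in fact slightly cleaner than the paper's for the boundary check at $\{x\cdot e_j=L_1\}$, since it makes $\widetilde\varepsilon\psi_j(x)\ge\varepsilon$ immediate and hence $\overline u\ge1\ge u$ there; and your explicit treatment of the truncation regions $\{\overline u=1\}$, $\{\underline u=0\}$ and of the comparison on the $e_j$-unbounded strip fills in details the paper leaves implicit.
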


\begin{proof}
Let $\delta>0$ and $\omega>0$ be defined as in Lemma~\ref{lemma3.1}. Define $\hat{\epsilon}$ as in Lemma~\ref{lemma3.2}. Since $\sup_{x\in\overline{\mathcal{H}_j(L)}}|u(t_0,x)-u^j_r(t_0+\tau,x)|\le \varepsilon$, it follows from similar arguments to those of Lemmas~\ref{lemma3.1} and~\ref{lemma3.2} that the following functions
$$\max\left(u^j_r(t+\hat{\varepsilon}\omega e^{-\delta (t-t_0)}-\hat{\varepsilon}\omega+\tau,x)-\hat{\varepsilon}\delta e^{-\delta (t-t_0)}-\hat{\varepsilon}\widetilde{\delta} e^{- \lambda_j (x\cdot e_j-L_1)} \psi_j(x), 0\right)$$
and
$$\min\left(u^j_r(t-\hat{\varepsilon}\omega e^{-\delta (t-t_0)}+\hat{\varepsilon}\omega +\tau,x)+\hat{\varepsilon}\delta e^{-\delta (t-t_0)}+\hat{\varepsilon}\widetilde{\delta} e^{- \lambda_j (x\cdot e_j-L_1)}\psi_j(x),1\right)$$
are respectively a sub-solution and a super-solution of the problem satisfied by $u(t,x)$ for all $t\ge t_0$ and $x\in\overline{\mathcal{H}_j(L_1)}$. It then follows  that
\begin{align*}
u^j_r(t+\hat{\varepsilon}\omega e^{-\delta (t-t_0)}-\hat{\varepsilon}\omega&+\tau,x)-\hat{\varepsilon}\delta e^{-\delta (t-t_0)}-\hat{\varepsilon}\widetilde{\delta} e^{- \lambda_j (x\cdot e_j-L_1)} \psi_j(x)\le u(t,x)\\
&\le u^j_r(t-\hat{\varepsilon}\omega e^{-\delta (t-t_0)}+\hat{\varepsilon}\omega +\tau,x)+\hat{\varepsilon}\delta e^{-\delta (t-t_0)}+\hat{\varepsilon}\widetilde{\delta} e^{- \lambda_j (x\cdot e_j-L_1)}\psi_j(x)
\end{align*}
for all $t\ge t_0$ and $x\in\overline{\mathcal{H}_j(L_1)}$. For these $t$ and $x$, since $(u^j_r)_t>0$, one infers that
$$u(t,x)\le u^j_r(t+\hat{\varepsilon}\omega+\tau,x)+2\hat{\varepsilon}\delta \le u^j_r(t+\tau,x)+ \hat{\varepsilon}\omega\|(u^j_r)_t\|_{L^{\infty}}+2\hat{\varepsilon}\delta.$$
Similarly, one can prove that $u(t,x)\ge u^j_r(t+\tau,x)-\omega \hat{\varepsilon}\|(u^j_r)_t\|_{L^{\infty}}-2\hat{\varepsilon}\delta$. As a consequence, one has
$$\sup_{x\in\overline{\mathcal{H}_j(L_1)}} |u(t,x)-u^j_r(t+\tau,x)|\le \omega \hat{\varepsilon}\|(u^j_r)_t\|_{L^{\infty}}+2\hat{\varepsilon}\delta=N\epsilon\ \text{ for all $t\ge t_0$},$$
with the constant $N=\max_{j\in \{1,\cdots,m\}}\Big(\omega\|(u^j_r)_t\|_{L^{\infty}}/\delta+2\Big)$ being independent of $j$, $\epsilon$, $t_0$ and $\tau$.
\end{proof}
\vskip 0.3cm

\begin{proof}[Proof of Theorem~\ref{th2}]
Let $L_1>0$, $t_1\in\R$, $t_2\in\R$, $\tau_1\in\R$,  $\tau_2\in\R$ and $\delta>0$ be as in Lemma~\ref{lemma3.1}. For every $j\in J_1$, $t\ge\max(t_1,t_2)$ and $x\in\overline{\mathcal{H}_j(L_1)}$, there holds
\be\label{lug}\baa{l}
u^j_r(t-t_1+\tau_1,x)-\delta e^{-\delta (t-t_1)}-\widetilde{\delta} e^{- \lambda_j (x\cdot e_j-L_1)}\psi_j(x)\vspace{3pt}\\
\qquad\qquad\qquad\le u(t,x)\le u^j_r(t-t_2+\tau_2,x)+\delta e^{-\delta (t-t_2)}+\widetilde{\delta} e^{- \lambda_j (x\cdot e_j-L_1)}\psi_j(x).\eaa
\ee

Consider now any sequence $\{t_n\}_{n\in\mathbb{N}}$ such that $t_n\rightarrow +\infty$ as $n\rightarrow +\infty$, and consider any~$j\in J_1$. For every $n\in\N$, let~$\mathcal{H}_{j}^n=\mathcal{H}_j-c^j_r t_ne_j$, $A_n(x)=A(x+c^j_r t_n e_j)$,  $q_n(x)=q(x+c^j_r t_n e_j)$ and $f_n(x,u)=f(x+c^j_r t_n e_j)$. By Assumption~\ref{assumption-limiting}, there is an infinite cylinder $\mathcal{H}_j^{\infty}$ parallel to $e_j$ such that $\mathcal{H}_j^n$ converge locally uniformly to $\mathcal{H}_j^{\infty}$ and there are $A_{\infty}(x)$, $q_{\infty}(x)$, $f_{\infty}(x,\cdot)$ satisfying \eqref{wA} such that $A_n\rightarrow A_{\infty}$, $q_n\rightarrow q_{\infty}$, $f_n\rightarrow f_{\infty}$ locally uniformly in $\mathcal{H}_j^{\infty}$ as $n\rightarrow +\infty$. Let $v_n(t,x)=u^j_r(t+t_n,x+c^j_r t_n e_j)$. Since $u^j_r(t,x)$ is an almost-planar, one has that for any $\varepsilon>0$, there is $M_{\varepsilon}>0$ such that
\begin{eqnarray}\label{vnMe}
\left\{\begin{array}{lll}
v_n(t,x)\ge 1-\varepsilon, && \hbox{ for $t\in\R$ and $x\in\overline{\mathcal{H}^n_j}$ such that $x\cdot e_j-c^i_r t\le -M_{\varepsilon}$},\\
v_n(t,x)\le \varepsilon, && \hbox{ for $t\in\R$ and $x\in\overline{\mathcal{H}^n_j}$ such that $x\cdot e_j-c^i_r t\ge M_{\varepsilon}$}.
\end{array}
\right.
\end{eqnarray}
From standard parabolic estimates, up to extraction of a subsequence, the functions $v_n(t,x)$ converge locally uniformly to a solution $v_{\infty}(t,x)$ of
\begin{eqnarray}\label{eq-infinite}
\left\{\begin{array}{lll}
&u_t-$div$(A_{\infty}(x)\nabla u)+q_{\infty}(x)\cdot \nabla u=f_{\infty}(x,u), \quad &t\in\R,\ x\in\mathcal{H}_j^{\infty},\\
&\nu A_{\infty}(x)\nabla u=0,& t\in\R,\ x\in\partial\mathcal{H}_j^{\infty}.
\end{array}
\right.
\end{eqnarray}
By \eqref{vnMe}, $v_{\infty}(t,x)$ is still an almost-planar front connecting $0$ and $1$ facing direction $e_j$ with
\be\label{sets-v}
\hbox{$\Gamma_t=\{x\in\overline{\mathcal{H}_j^{\infty}}; x\cdot e_j=c^j_r t\}$, $\Omega_t^{+}=\{x\in\overline{\mathcal{H}_j^{\infty}}; x\cdot e_j<c^j_r t\}$ and $\Omega_t^{-}=\{x\in\overline{\mathcal{H}_j^{\infty}}; x\cdot e_j>c^j_r t\}$. }
\ee
Then, $v_{\infty}(t,x)=v_r(t,x)$ up to shifts by Assumption~\ref{assumption-limiting}.

Now, let $u_n(t,y)=u(t+t_n,y+c^j_r t_ne_j)$ defined in $\R\times\overline{\Omega-c^j_r t_ne_j}$.
From standard parabolic estimates, up to extraction of a subsequence, the functions $u_n(t,x)$, converge locally uniformly in $(t,y)\in\R\times\overline{\mathcal{H}_j^\infty}$ to a solution $u_{\infty}(t,y)$~of \eqref{eq-infinite}.
It follows from \eqref{lug} that
\begin{align*}
v_{\infty}(t-t_1+\tau_1,y)\le u_{\infty}(t,y)\le v_{\infty}(t-t_2+\tau_2,y)
\end{align*}
for all $(t,y)\in\R\times\overline{\mathcal{H}_j^{\infty}}$. In particular, $u_\infty$ is an almost-planar front connecting $0$ and $1$ facing direction $e_j$ of~\eqref{eq-infinite} in the cylinder $\mathcal{H}_j^\infty$ with sets $\Gamma_t$ and $\Omega_t^{\pm}$ defined by \eqref{sets-v}. By Assumption~\ref{assumption-limiting}, there is $\tau_j\in\R$ such that $u_{\infty}(t,y)=v_{\infty}(t+\tau_j,y)$ for all $(t,y)\in\R\times\overline{\mathcal{H}_j^{\infty}}$. Therefore,
\be\label{2.22}
u_n(t,y)\rightarrow v_{\infty}(t+\tau_j,y)\ \ \text{locally uniformly in $\R\times\overline{\mathcal{H}_j^{\infty}}$ as $n\rightarrow +\infty$}.
\ee

Remember that
\begin{equation*}
v_n(t+\tau_j,y)=u^j_r(t+t_n+\tau_j,y+c^j_r t_n e_j)\rightarrow v_{\infty}(t+\tau_j,y)\ \ \text{locally uniformly in $\R\times\overline{\mathcal{H}_j^{\infty}}$ as $n\rightarrow +\infty$}.
\end{equation*}
Pick now any $\varepsilon>0$, and let $M_{\varepsilon}>0$ be defined by \eqref{vnMe}. Let $L_{\varepsilon}\ge L_1$ such that
\be\label{eq-Le}
\delta e^{- \lambda_j (x\cdot e_j-L_1)}\le \frac{\varepsilon}{3} \hbox{ for $x\cdot e_j\ge L_{\varepsilon}$}.
\ee
Define $K=c^j_r\max(|t_1-\tau_1+\tau_j|, |t_2-\tau_2+\tau_j|, |\tau_j|)$. It then follows from \eqref{2.22} that
\be\label{eq+4.12}
\sup_{y\in\overline{\mathcal{H}_j^{n}},\ |y\cdots e_j-c^j_r \tau_j|\le M_{\varepsilon/2}+K}|u_n(0,y)-v_n(\tau_j,y)|\le\varepsilon\ \  \text{for $n$ large enough}.
\ee
Since $t_n\rightarrow +\infty$ as $n\rightarrow +\infty$,~\eqref{lug} and \eqref{eq-Le} imply that, for $n$ large enough and $y\in \overline{\mathcal{H}_j^n}$ such that $y\cdot e_j\ge L_{\varepsilon}-c^j_r t_n$,
$$
u^j_r(t+t_n-t_1+\tau_1,y+c^j_r t_ne_j)-\frac{\varepsilon}{2}  \le u_n(t,y)\le u^j_r(t+t_n-t_2+\tau_2,y+c^j_r t_ne_j)+\frac{\varepsilon}{2}.
$$
Therefore, by \eqref{vnMe}, one has that for $n$ large enough,
\be\label{une}\left\{\baa{ll}
0<u_n(0,y)\le \varepsilon & \text{for all $y\in\overline{\mathcal{H}_j^n}$ such that $y\cdot e_j-c^j_r \tau_j\ge M_{\varepsilon/2}+K $},\vspace{3pt}\\
1-\varepsilon\le u_n(0,y) <1 & \displaystyle\text{for all $y\in\overline{\mathcal{H}_j^n( L_{\varepsilon}-c^j_r t_n)}$ such that $ y\cdot e_j-c^j_r \tau_j\le M_{\varepsilon/2}+K$}.
\eaa\right.
\ee
Since $v_n(\tau_j,y)=u^j_r(t_n+\tau_j,y+c^j_r t_n e_j)$,  one has $0<v_n(\tau_j,y)\le \varepsilon/2\le\epsilon$ for all $y\in\overline{\mathcal{H}_j^n}$ such that $y\cdot e_j-c^j_r \tau_j\ge M_{\varepsilon/2}+K $, and $1-\varepsilon\le1-\epsilon/2\le v_n(\tau_j,y)<1$ for all $y\in\overline{\mathcal{H}_j^n}$ such that $ y\cdot e_j-c^j_r\tau_j\le -M_{\varepsilon/2}-K$. It then can be deduced from \eqref{une} that, for $n$ large enough,
$$|u_n(0,y)-v_n(\tau_j,y)|\le \varepsilon$$
for all $y\in\overline{\mathcal{H}_j^n}$ such that $y\cdot e_j-c^j_r \tau_j\ge M_{\varepsilon/2}+K$ and $y\in\overline{\mathcal{H}_j^n(L_{\varepsilon}-c^j_r t_n)}$ such that $ y\cdot e_j-c^j_r \tau_j\le -M_{\varepsilon/2}-K$.
By the definitions of $u_n(t,y)$, $v_n(t,y)$ and $\mathcal{H}_j^n$ together with \eqref{eq+4.12}, one gets that, for $n$ large enough,
\be\label{eq+4.14}
|u(t_n,x)-u^j_r(t_n+\tau_j,x)|\le \varepsilon\ \  \text{for all $x\in\overline{\mathcal{H}_j}$ such that $x\cdot e_j\ge L_{\varepsilon}$}.
\ee
It then follows from Lemma~\ref{lemma3.3} that, for $n$ large enough, $|u(t,x)-u^j_r(t+\tau_j,x)|\le N\epsilon$ for all $t\ge t_n$ and $x\in\overline{\mathcal{H}_j(L_{\varepsilon})}$, where the constant $N\ge0$ is given in Lemma~\ref{lemma3.3}. One can take $\varepsilon>0$ arbitrary small by taking $L_{\varepsilon}$ large enough. Notice that the choice of $\tau_j$ is independent of $\varepsilon$ and $L_{\varepsilon}$. Then, one concludes that
$$u(t,x)-u^j_r(t+\tau_j,x)\to 0\ \ \text{uniformly for $x\in\overline{\mathcal{H}_j}$ such that $x\cdot e_j\ge L_{\varepsilon}$, as $t\rightarrow +\infty$ and $L_{\varepsilon}\rightarrow +\infty$}.$$
In particular, we can take $L_{\varepsilon}=\mu t$ for any positive constant $\mu$.

The proof of Theorem~\ref{th2} is thereby complete.
\end{proof}
\vskip 0.3cm

\begin{proof}[Proof of Corollary~\ref{cor2}]
By Corollary~\ref{cor1}, the complete propagation of $u(t,x)$ means that $J_1\equiv J$.
We now only have to modify slightly in the proof of Theorem~\ref{th2}. By the proof of Theorem~\ref{th2}, one can get that for any $\varepsilon>0$, there exist a sequence $\{t_n\}_{n\in \mathbb{N}}$ such that $t_n\rightarrow +\infty$ and a constant $L_{\varepsilon}\ge L_1$ ($L_1$ is defined in Lemma~\ref{lemma3.1}) such that \eqref{eq+4.14} holds for large $n$. Since the propagation of $u(t,x)$ is complete, it implies that $u(t_n,x)\ge 1-\varepsilon$ for $x\in\overline{\mathcal{H}_j}$ such that $L_1\le x\cdot e_j\le L_{\varepsilon}$ for large $n$.  By the definition of $u^j_r(t,x)$, one also knows that $u^j_r(t_n+\tau_j,x) \ge 1-\varepsilon$ for $x\in\overline{\mathcal{H}_j}$ such that $L_1\le x\cdot e_j\le L_{\varepsilon}$ and large $n$. Then, by \eqref{eq+4.14}, one has $|u(t_n,x)-u^j_r(t_n+\tau_j,x)|\le \varepsilon$ for all $x\in\overline{\mathcal{H}_j}$ such that $x\cdot e_j\ge L_1$. Finally, it follows from Lemma~\ref{lemma3.3} and $\varepsilon$ being arbitrary small that
$$u(t,x)-u^j_r(t+\tau_j,x)\to 0\ \ \text{uniformly for $x\in\overline{\mathcal{H}_j}$ such that $x\cdot e_j\ge L_1$, as $t\rightarrow +\infty$}.$$
By \eqref{frontlike} and $u(t,x)$ converges to $1$ locally uniformly in $\overline{\Omega}$, one can easily get \eqref{largetime}. Then, it is elementary to check that $u$ is a transition front connecting $0$ and $1$ of \eqref{eq1.1} defined by Definition~\ref{TF} with sets $(\Gamma_t)_{t\in\R}$ and $\Omega_t^{\pm}$ defined by \eqref{eq+1.11} and \eqref{eq+1.12}.
\end{proof}

\section{Uniqueness of transition fronts}
In this section, we study the uniqueness of the transition front connecting $0$ and $1$, that is, Theorem~\ref{th3}. Let $u(t,x)$ be any transition front connecting $0$ and $1$ of \eqref{eq1.1}. In the sequel, we always assume that for every $i\in\{1,\cdots,m\}$, the entire solution $u_i$ of \eqref{front} propagates completely. Then, by Corollary~\ref{cor1}, it implies that $c^i_r>0$ for all $i\in\{1,\cdots,m\}$. One also has that $c^i_l>0$ for all $i\in \{1,\cdots,m\}$. In fact, if $i\in\{1,\cdots,m\}$ such that $c^i_l<0$, one can replace $u$ and $f(x,u)$ by $v:=1-u$ and $g(x,v):=-f(x,1-u)$. Then, $v(t,x)$ satisfies
\begin{eqnarray*}
\left\{\begin{array}{lll}
&v_t-$div$(A(x)\nabla v)+q(x)\cdot \nabla v=g(x,v), \quad &t\in\R,\ x\in\Omega,\\
&\nu A(x)\nabla v=0,& t\in\R,\ x\in\partial\Omega.
\end{array}
\right.
\end{eqnarray*}
Let $v_i(t,x)=1-u_i(t,x)$ and $v^i_r(t,x)=1-u^i_l(t,x)$. Then, $v^i_r(t,x)$ is an almost-planar front facing $e_i$ with speed $-c^i_l>0$ and $v^i_r(t,x)\rightarrow 1$ locally uniformly in $\overline{\widetilde{\mathcal{H}}_i}$ as $t\rightarrow +\infty$. Since $u_i(t,x)$ satisfies \eqref{front}, one has that for any $\varepsilon>0$ and $L_1\ge L$, there is $t_0<0$ such that
$$v_i(t_0,x)\ge 1-\varepsilon \hbox{ for $\overline{\Omega}\cap\overline{\mathcal{H}_i}$ and $x\in \overline{\mathcal{H}_i}$ such that $x\cdot e_i\le L_1$}.$$
Then, by the proof of Theorem \ref{th2}, there is $\tau\in \R$ such that $v_i(t,x)-v^i_r(t,x)\rightarrow 0$ for $x\in\overline{\mathcal{H}_i}$ such that $x\cdot e_i\ge \eta t$ as $t\rightarrow +\infty$, where $\eta$ is an arbitrary positive constant. This contradicts the complete propagation of $u_i(t,x)$.

For any $i\in \{1,\cdots,m\}$ and any $R>0$, define
$$\mathcal{H}_i(R):=\{x\in\mathcal{H}_i; x\cdot e_i\ge R\}.$$
Notice that $\mathcal{H}_i(R)\subset \Omega$ for any $R\ge L$. Let $\delta$ be a positive constant such that
\be\label{delta-s4}
0<\delta\le \min\Big(\gamma,\frac{\sigma}{3}\Big),
\ee
where $\gamma$ and $\sigma$ are defined by \eqref{eq-F}.

\subsection{Preliminaries}
In this subsection, we study some properties of entire solutions emanating from almost-planar fronts and two initial value problems.

\begin{lemma}\label{lemma4.1}
For any $\eta\in (0,1/2]$, denote
$$\Omega_{\eta}=\{(t,x)\in\R\times\overline{\Omega}; \eta\le u_i(t,x)\le 1-\eta\}.$$
There is $k>0$ such that $(u_i)_t(t,x)\ge k$ for any $(t,x)\in \Omega_{\eta}$.
\end{lemma}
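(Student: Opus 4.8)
The statement is the analogue, for the entire solution $u_i$ emanating from the almost-planar front $u^i_l$ in the branch $\mathcal{H}_i$, of Lemma~\ref{lemma-m} (which is itself modeled on Lemma~\ref{muil}). The plan is to run exactly the compactness-and-sliding argument used to prove Lemma~\ref{muil}, now on the full domain $\overline{\Omega}$ rather than on a single extended branch. First I would recall that, by the construction in Section~2 (monotonicity passed to the limit), $(u_i)_t>0$ everywhere in $\R\times\overline{\Omega}$; this is the strict positivity, and the content of the lemma is the uniform lower bound $k$ on the level set $\Omega_\eta$. Note that $(u_i)_t$ solves the linearized parabolic equation with Neumann-type boundary condition, so it is a positive supersolution-type object to which the Hopf lemma applies at the boundary.

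The core is a contradiction argument. Suppose no such $k>0$ exists; then there are sequences $(t_n,x_n)\in\Omega_\eta$ with $(u_i)_t(t_n,x_n)\to0$. Two cases. If $(t_n,x_n)$ stays in a compact subset of $\R\times\overline{\Omega}$, extract a convergent subsequence $(t_n,x_n)\to(t_*,x_*)$; then $(u_i)_t(t_*,x_*)=0$, contradicting either the strict positivity $(u_i)_t>0$ (if $x_*\in\Omega$) or, via the Hopf lemma applied to $(u_i)_t$, the Neumann boundary condition (if $x_*\in\partial\Omega$). If instead $(t_n,x_n)$ escapes every compact set: since $\eta\le u_i(t_n,x_n)\le 1-\eta$ and $u_i$ satisfies \eqref{frontlike} (with $I=\{i\}$), the points $x_n$ must lie in the branch $\mathcal{H}_i$ and, by the remark following Lemma~\ref{lemma-m}, must satisfy $|x_n\cdot e_i + c^i_l t_n|\le D$ for some fixed $D$ (otherwise $u_i$ would be close to $0$ or $1$ there, contradicting $(t_n,x_n)\in\Omega_\eta$ for $n$ large, using that the front has an interface and that $u_i\to0$ off $\mathcal{H}_i$); in particular $t_n\to\pm\infty$ and $x_n\cdot e_i\to+\infty$. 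Then I would translate: set $\mathcal{H}_i^n=\widetilde{\mathcal{H}}_i-(x_n\cdot e_i)e_i$, shift the coefficients accordingly, invoke Assumption~\ref{assumption-limiting} to extract a limiting straight cylinder $\mathcal{H}_i^\infty$ with limiting coefficients, and pass to the limit in $u_i(t+t_n,\cdot+(x_n\cdot e_i)e_i)$, obtaining (by parabolic estimates) a solution $u_*$ of the limiting equation. The uniform bound $|x_n\cdot e_i+c^i_l t_n|\le D$ together with the almost-planar estimates \eqref{Md}-type bounds for $u^i_l$ forces $u_*$ to be, after a bounded shift, trapped between two shifts of the limiting almost-planar front $v_l$; hence $u_*$ is itself an almost-planar front facing $-e_i$ with speed $c^i_l>0$, so by Assumption~\ref{assumption-limiting} (uniqueness) $u_*$ coincides with a time-shift of $v_l$, which is time-increasing, $(v_l)_t>0$. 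But the transversal coordinates $x_n'$ converge (bounded cross-section) to some $x_*'$ in the limiting cylinder, and $(u_*)_t$ at the corresponding point equals $\lim (u_i)_t(t_n,x_n)=0$ — a contradiction.

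The main obstacle is the escaping case: one must be careful that $u_i$, which satisfies \eqref{frontlike} with $I=\{i\}$, really does confine $\Omega_\eta$ to the slab $\{|x\cdot e_i+c^i_l t|\le D\}$ inside $\mathcal{H}_i$, and that the translated functions $u_i(t+t_n,\cdot+(x_n\cdot e_i)e_i)$ genuinely converge to an \emph{almost-planar} front for the limiting equation rather than to something degenerate (e.g.\ a constant). This is where the uniform bound coming from \eqref{frontlike} and the almost-planar estimates for $u^i_l$ are essential, exactly as in the final paragraph of the proof of Lemma~\ref{muil}; everything else is a routine repetition of that argument. I would therefore simply write: ``One can apply the proof of Lemma~\ref{muil}, with the extended branch $\widetilde{\mathcal{H}}_i$ replaced by $\overline{\Omega}$ and $u^i_l$ replaced by $u_i$, using \eqref{frontlike} and the remark after Lemma~\ref{lemma-m} to confine $\Omega_\eta$ to a bounded slab in $\mathcal{H}_i$, to obtain the desired constant $k>0$.''
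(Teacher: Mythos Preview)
Your compactness argument correctly handles the case $t_n\to-\infty$ (and the bounded case via the strong maximum principle/Hopf lemma), but it misses the case $t_n\to+\infty$. The confinement you invoke --- the remark after Lemma~\ref{lemma-m}, which places $\Omega_\eta(t)$ inside the slab $\{x\in\mathcal{H}_i:|x\cdot e_i+c^i_l t|\le D\}$ --- is only valid for $t\le T_\delta$, i.e.\ for very negative times. If $t_n\to+\infty$, the level set $\Omega_\eta(t_n)$ is \emph{not} confined to the incoming branch $\mathcal{H}_i$; instead it lives in the outgoing branches $\mathcal{H}_j$, $j\neq i$. Your assertion that ``the points $x_n$ must lie in the branch $\mathcal{H}_i$'' therefore fails in this case (indeed the bound $|x_n\cdot e_i+c^i_l t_n|\le D$ with $t_n\to+\infty$ would force $x_n\cdot e_i\to-\infty$, which is impossible in $\mathcal{H}_i$).

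To close this gap you need exactly what the paper uses: the standing hypothesis of Section~4 that $u_i$ propagates completely, together with Corollary~\ref{cor2}, which gives the large-time behavior \eqref{largetime}. That tells you that for $t$ large, $u_i(t,\cdot)$ is uniformly close to $u^j_r(t+\tau_j,\cdot)$ in each branch $\overline{\mathcal{H}_j}$, $j\neq i$, and close to $1$ elsewhere. Hence $\Omega_\eta(t_n)$ for large $t_n$ sits in slabs $\{x\in\mathcal{H}_j:|x\cdot e_j-c^j_r t_n|\le D'\}$, and you rerun the translation/limiting argument along $e_j$: by Assumption~\ref{assumption-limiting} the limit is (a shift of) the almost-planar front $v_r$ facing $e_j$ with speed $c^j_r>0$, which has $(v_r)_t>0$, yielding the same contradiction. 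The paper's proof is precisely this: it says ``apply the proof of Lemma~\ref{muil}'' for $t_n\to-\infty$ and then, crucially, ``one only has to analyze one more case, that is, $t_n\to+\infty$'' using Corollary~\ref{cor2}. Add that second half and your sketch is complete.
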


\begin{proof}
It can be proved similarly by the proof of Lemma~\ref{muil}. Since $u_i(t,x)$ propagates completely, it follows from Corollary~\ref{cor2} that $u_i(t,x)$ has large time behaviour as \eqref{largetime} for $J=\{1,\cdots,m\}\setminus \{i\}$. Then, one only has to analyze one more case, that is, $t_n\rightarrow +\infty$ by similar arguments for the case $t_n\rightarrow -\infty$ in the proof of Lemma~\ref{muil}. Here, we omit the details.
\end{proof}
\vskip 0.3cm

Let $I$ be any non-empty subset of $\{1,\cdots,m\}$ such that $\{1,\cdots,m\}\setminus I\neq \emptyset$. Let $(\tau_i)_{i\in I}$ be a family of non-positive constants. Let $u_{I,\tau_i}(t,x)$ be the entire solution emanating from almost-planar fronts $u^i_l(t+\tau_i,x)$ in branches $\mathcal{H}_i$ where $i\in I$, that is,
\be\label{eq-uI}
\left\{\baa{rcll}
u_{I,\tau_i}(t,x)\!-\!u^i_l(t+\tau_i,x) & \!\!\!\to\!\!\! & 0\!\! & \text{uniformly in $\overline{\mathcal{H}_i}\cap\overline{\Omega}\,$ for every $i\in I$},\vspace{3pt}\\
u_{I,\tau_i}(t,x) & \!\!\to\!\! & 0\!\! & \displaystyle\text{uniformly in $\overline{\Omega\setminus \mathop{\bigcup}_{i\in I}\mathcal{H}_i}$},
\eaa\right.
\ee
as $t\rightarrow -\infty$.

\begin{lemma}\label{lemma-uI}
The entire solution $u_{I,\tau_i}(t,x)$ propagates completely, that is, $u_{I,\tau_i}(t,x)\rightarrow 1$ locally uniformly in $\overline{\Omega}$ as $t\rightarrow +\infty$.
\end{lemma}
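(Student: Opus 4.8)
\noindent The plan is to deduce the complete propagation of $u_{I,\tau_i}$ from that of a single-branch entire solution, which by the standing assumption of this section is already known to invade all of $\overline{\Omega}$. Since $J:=\{1,\cdots,m\}\setminus I$ is non-empty, we may fix any $i\in I$. Recall that, by hypothesis, $u_i(t,x)\to1$ as $t\to+\infty$ locally uniformly in $\overline{\Omega}$, and hence so does the shifted solution $u_i(t+\tau_i,x)$. The main goal is to show that $u_{I,\tau_i}(t,x)\ge u_i(t+\tau_i,x)$ for all $t\in\R$ and $x\in\overline{\Omega}$; once this is established, the lemma follows immediately from the squeeze $u_i(t+\tau_i,x)\le u_{I,\tau_i}(t,x)<1$.

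To obtain the comparison I would first line the two solutions up at a sufficiently negative time. Fix $\varepsilon>0$. On $\overline{\mathcal{H}_i}\cap\overline{\Omega}$ both $u_{I,\tau_i}(t,x)$ and $u_i(t+\tau_i,x)$ converge, as $t\to-\infty$, to $u^i_l(t+\tau_i,x)$ by \eqref{eq-uI} and \eqref{front} (the constant shift $\tau_i$ being immaterial for these limits), while on $\overline{\Omega\setminus\mathcal{H}_i}$ one has $u_i(t+\tau_i,x)\to0$ by \eqref{front} and $u_{I,\tau_i}(t,x)>0$; since these two sets cover $\overline{\Omega}$, there is $t_{\varepsilon}<0$ with $u_{I,\tau_i}(t_{\varepsilon},x)\ge u_i(t_{\varepsilon}+\tau_i,x)-\varepsilon$ for all $x\in\overline{\Omega}$. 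Then I would run the sliding/absorption argument already used in Section~2.2 and in the proof of Corollary~\ref{cor1}: with $\delta>0$ as in \eqref{delta-s4}, with $\omega>0$ chosen so that $\omega k\delta\ge\delta+M$ (where $k$ is the constant furnished by Lemma~\ref{lemma4.1} and $M=\sup_{x\in\R^N,u\in[0,1]}|f_u(x,u)|$), the function $u_i(t_{\varepsilon}+\tau_i+t-\omega\varepsilon(1-e^{-\delta t}),x)-\varepsilon e^{-\delta t}$ is a subsolution of \eqref{eq1.1} for $t\ge0$ which lies below $u_{I,\tau_i}(t_{\varepsilon}+t,x)$ at $t=0$. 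The comparison principle then yields $u_{I,\tau_i}(t,x)\ge u_i(t+\tau_i-\omega\varepsilon(1-e^{-\delta(t-t_{\varepsilon})}),x)-\varepsilon e^{-\delta(t-t_{\varepsilon})}$ for all $t\ge t_{\varepsilon}$; letting first $t_{\varepsilon}\to-\infty$ and then $\varepsilon\to0$ gives $u_{I,\tau_i}(t,x)\ge u_i(t+\tau_i,x)$ on all of $\R\times\overline{\Omega}$, which completes the argument.

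The step I expect to require the most care is checking that the shifted-and-perturbed function displayed above is genuinely a subsolution of \eqref{eq1.1} satisfying the homogeneous Neumann condition. This is, however, a verbatim repetition of the computations already carried out for $\underline u$ in Lemma~\ref{lemma2.1} and for $u^-$ in Section~2.2 (compare also Section~3 of \cite{BHM}): one splits $\overline{\Omega}$ according to whether the time argument in that expression places $u_i$ close to $0$, close to $1$, or in the transition range, invoking \eqref{eq-F} together with $\delta\le\min(\gamma,\sigma/3)$ in the first two cases and the monotonicity estimate of Lemma~\ref{lemma4.1} together with $\omega k\delta\ge\delta+M$ in the last. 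Accordingly I would simply refer to those arguments rather than reproduce the case analysis.
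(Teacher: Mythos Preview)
Your proposal is correct and follows essentially the same route as the paper: fix $i\in I$, compare $u_{I,\tau_i}$ with a time-shifted and perturbed copy of $u_i$ using the monotonicity estimate of Lemma~\ref{lemma4.1}, and conclude from the assumed complete propagation of $u_i$. The only cosmetic difference is that the paper works with a single fixed perturbation of size $\delta$ and obtains $u_{I,\tau_i}(t,x)\ge u_i(t+\tau_i+\omega e^{-\delta(t-T_1)}-\omega,x)-\delta e^{-\delta(t-T_1)}$, which already suffices for the conclusion, whereas you take $\varepsilon\to0$ after $t_\varepsilon\to-\infty$ to extract the cleaner pointwise bound $u_{I,\tau_i}(t,x)\ge u_i(t+\tau_i,x)$.
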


\begin{proof}
Fix any $i\in I$. Let $u_i(t,x)$ be the entire solution emanating from $u^i_l(t,x)$ in the branch $\mathcal{H}_i$, that is, satisfying \eqref{front}. Then, there is $T_1<0$ such that
\be\label{uile}
\left\{\baa{lll}
u_i(t+\tau_i,x)\le u^i_l(t+\tau_i,x)+\frac{\delta}{2} && \text{for $t\le T_1$ and $x\in \overline{\mathcal{H}_i(L)}$},\vspace{3pt}\\
u_i(t+\tau_i,x)\le \frac{\delta}{2} && \text{for $t\le T_1$ and $x\in\overline{\Omega\setminus\mathcal{H}_i(L)}$}.
\eaa\right.
\ee
By \eqref{eq-uI}, even if it means decreasing $T_1<0$, one has that
\be\label{uIge}
u_{I,\tau_i}(T_1,x)\ge u^i_l(T_1+\tau_i,x)-\frac{\delta}{2} \text{ for $x\in \overline{\mathcal{H}_i(L)}$}.
\ee
Define
$$\Omega_{\xi}^+(\delta):=\{\xi\in\R, x\in\overline{\Omega}; u_i(\xi,x)\ge 1-\delta\} \hbox{ and } \Omega_{\xi}^-(\delta):=\{\xi\in\R, x\in\overline{\Omega}; u_i(\xi,x)\le \delta\}.$$
Then $\delta\le u_i(\xi,x)\le 1-\delta$ for $(\xi,x)\in\overline{\Omega}\setminus \Big(\Omega_{\xi}^+(\delta)\cup\Omega_{\xi}^-(\delta)\Big)$. By Lemma~\ref{lemma4.1}, there is $k>0$ such that $(u_i)_\xi(\xi,x)\ge k$ for $(\xi,x)\in\overline{\Omega}\setminus \Big(\Omega_{\xi}^+(\delta)\cup\Omega_{\xi}^-(\delta)\Big)$. Let $\omega>0$ such that
\be\label{eq-omega2}
k\omega\ge \delta+M,
\ee
where $M=\sup_{x\in \R^N, u\in [0,1]} |f_u(x,u)|$.

For any $t\ge 0$ and $x\in\overline{\Omega}$, define
\be\label{un-u}
\underline{u}(t,x)=\max\Big(u_{i}(\underline{\zeta}(t),x)-\delta e^{-\delta t},0\Big),
\ee
where
$$\underline{\zeta}(t)=t+T_1+\tau_i+\omega e^{-\delta t}-\omega$$
We check that $\underline{u}(t,x)$ is a subsolution of the problem satisfied by $u_{I,\tau_i}(t,x)$.

At the time $t=0$, one has that $\underline{\zeta}(0)=T_1+\tau_i\le T_1$ since $\tau_i$ is non-positive. Then, by \eqref{uile} and \eqref{uIge},
$$\underline{u}(0,x)\le \max\Big(u_{i}(T_1+\tau_i,x)-\delta,0\Big)\le u^i_l(t,x)-\frac{\delta}{2}\le u_{I,\tau_i}(T_1,x) \hbox{ for $x\in\overline{\mathcal{H}_i(L)}$},$$
and
$$\underline{u}(0,x)\le \max\Big(\frac{\delta}{2}-\delta,0\Big)=0\le u_{I,\tau_i}(T_1,x) \hbox{ for $x\in\overline{\Omega\setminus\mathcal{H}_i(L)}$}.$$
Thus, $\underline{u}(0,x)\le u_{I,\tau_i}(T_1,x)$ for all $x\in\overline{\Omega}$. It is obvious that $\nu A(x)\nabla\underline{u}= 0$ on $x\in\partial \Omega$.

Now, let us check that
$$N(t,x):=\underline{u}_t-\hbox{div}(A(x)\nabla\underline{u})+q(x)\cdot \nabla\underline{u}-f(x,\underline{u})\le 0,$$
for any $t\ge 0$ and $x\in\overline{\Omega}$ such that $\underline{u}(t,x)>0$. It follows from some calculation that
\begin{align*}
N(t,x)= -\omega \delta e^{-\delta t} (u_{i})_t(\underline{\zeta}(t),x) +\delta^2 e^{-\delta t}+f((u_{i}(\underline{\zeta}(t),x))-f(x,\underline{u}(t,x)).
\end{align*}
For $t\ge 0$ and $x\in\overline{\Omega}$ such that $(\underline{\zeta}(t),x)\in\Omega_{\underline{\zeta}(t)}^-(\delta)$, it follows that $0<u_{i}(\underline{\zeta}(t),x)\le \delta$ and $\underline{u}(t,x)\le \delta$. Then, by \eqref{eq-F} and \eqref{delta-s4}, one has that
$$f(x,u_{i}(\underline{\zeta}(t),x))-f(x,\underline{u}(t,x))\le -\gamma \delta e^{-\delta t}.$$
Thus,
\begin{align*}
N(t,x)\le -\omega\delta e^{-\delta t}(u_{i})_t(\underline{\zeta}(t),x) +\delta(-\gamma+\delta) e^{-\delta t}\le 0,
\end{align*}
by $(u_{i})_t(\overline{\zeta}(t),x)>0$ and $\delta\le \gamma$. For $t\ge 0$ and $x\in\overline{\Omega}$ such that $(\underline{\zeta}(t),x)\in\Omega_{\underline{\zeta}(t)}^+(\delta)$, it follows that $u_{i}(\underline{\zeta}(t),x)\ge 1-\delta$ and $\underline{u}(t,x)\ge 1-2\delta$. Then, by by \eqref{eq-F} and \eqref{delta-s4},
$$f(x,u_{i}(\underline{\zeta}(t),x))-f(x,\underline{u}(t,x))\le -\gamma\delta e^{-\delta t}.$$
Thus,
$$N(t,x)\le -\omega\delta e^{-\delta t} (u_{i})_t(\underline{\zeta}(t),x)+\delta(-\gamma+\delta) e^{-\delta t}\le 0,$$
by $(u_{i})_t(\underline{\zeta}(t),x)>0$ and $\delta\le \gamma$.

Finally, for $t\ge 0$ and $x\in\overline{\Omega}$ such that $(\underline{\zeta}(t),x)\in\overline{\Omega}\setminus \Big(\Omega_{\underline{\zeta}(t)}^+(\delta)\cup\Omega_{\underline{\zeta}(t)}^-(\delta)\Big)$, there is $k>0$ such that $(u_{i})_t(\underline{\zeta}(t),x)\ge k>0$. It is obvious that
$$f(x,u_{i}(\underline{\zeta}(t),x))-f(x,\underline{u}(t,x))\le M\delta e^{-\delta t}.$$
Therefore,
\begin{align*}
N(t,x)\le& -k\omega \delta e^{-\delta t}+\delta^2 e^{-\delta t}+M\delta e^{-\delta t}\\
\le& -k\omega \delta e^{-\delta t} +\delta (\delta +M) e^{-\delta t} \le 0,
\end{align*}
by \eqref{eq-omega2}.

By the comparison principle, one concludes that
$$u_{I,\tau_i}(t,x)\ge \underline{u}(t-T_1,x)\ge u_{i}(t+\tau_i+\omega e^{-\delta (t-T_1)}-\omega,x)-\delta e^{-\delta (t-T_1)}, \hbox{ for $t\ge T_1$ and $x\in\overline{\Omega}$}.$$
Since $u_i(t,x)$ propagates completely, one has that $u_{I,\tau_i}(t,x)\rightarrow 1$ locally uniformly in $\overline{\Omega}$ as $t\rightarrow +\infty$.
 This completes the proof.
\end{proof}
\vskip 0.3cm

Similar as Lemma~\ref{lemma4.1}, we have the following corollary.

\begin{corollary}\label{cor-m}
For any $\eta\in (0,1/2]$, denote
$$\Omega^I_{\eta}=\{(t,x)\in\R\times\overline{\Omega}; \eta\le u_{I,\tau_i}(t,x)\le 1-\eta\}.$$
There is $k>0$ such that $(u_{I,\tau_i})_t(t,x)\ge k$ for any $(t,x)\in \Omega^I_{\eta}$.
\end{corollary}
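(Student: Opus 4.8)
The plan is to argue exactly as in the proof of Lemma~\ref{lemma4.1} (which itself follows the scheme of Lemma~\ref{muil}), now invoking Lemma~\ref{lemma-uI} and Corollary~\ref{cor2} to control the large‑time behaviour of $u_{I,\tau_i}$. First I would record the monotonicity facts. Since $u_{I,\tau_i}$ is the time‑increasing entire solution produced by Theorem~\ref{Th1} (with $\sigma_i=\tau_i$), one has $(u_{I,\tau_i})_t\ge 0$ on $\R\times\overline\Omega$; applying the strong maximum principle and the Hopf lemma to the linear parabolic equation satisfied by $w:=(u_{I,\tau_i})_t$, together with the fact that $w\not\equiv 0$ (because $u_{I,\tau_i}(\cdot+\tau_i,\cdot)-u^i_l\to0$ on $\overline{\mathcal H_i}\cap\overline\Omega$ as $t\to-\infty$ and $(u^i_l)_t>0$), one gets $w>0$ in $\R\times\Omega$, with $\partial_\nu w<0$ at any boundary point where $w$ would vanish. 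Next, by Lemma~\ref{lemma-uI} the solution $u_{I,\tau_i}$ propagates completely, so Corollary~\ref{cor2} applies: $u_{I,\tau_i}$ is a transition front, it converges to $u^j_r(\cdot+\tau_j,\cdot)$ in $\overline{\mathcal H_j}\cap\overline\Omega$ for every $j\in J:=\{1,\dots,m\}\setminus I$ as $t\to+\infty$, and its interfaces $\Gamma_t$ are given by \eqref{eq+1.11}. In particular there are $D>0$ and $T>0$ such that the slice $\Omega^I_\eta\cap(\{t\}\times\overline\Omega)$ is contained in $\bigcup_{i\in I}\{x\in\mathcal H_i:|x\cdot e_i+c^i_l t|\le D\}$ for $t\le-T$, in $\bigcup_{j\in J}\{x\in\mathcal H_j:|x\cdot e_j-c^j_r t|\le D\}$ for $t\ge T$, and is a bounded subset of $\overline\Omega$ for $|t|\le T$ (the planes in \eqref{eq+1.11} move at the finite speeds $c^i_l,c^j_r$ and $u_{I,\tau_i}$ is a transition front, so $\Omega^I_\eta$ lies within a bounded geodesic neighbourhood of $\Gamma_t$).

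Then I would argue by contradiction: suppose there is a sequence $(t_n,x_n)\in\Omega^I_\eta$ with $(u_{I,\tau_i})_t(t_n,x_n)\to 0$. Up to a subsequence, either $t_n\to t_*\in\R$, or $t_n\to-\infty$, or $t_n\to+\infty$. If $t_n\to t_*$, then $x_n$ stays bounded by the previous step, so $x_n\to x_*\in\overline\Omega$ and $(u_{I,\tau_i})_t(t_*,x_*)=0$; this contradicts $w>0$ in $\R\times\Omega$ if $x_*\in\Omega$, and the Hopf lemma combined with the Neumann condition if $x_*\in\partial\Omega$. If $t_n\to-\infty$, then for $n$ large $x_n$ lies in some branch $\mathcal H_i$ with $i\in I$ and $|x_n\cdot e_i+c^i_l t_n|\le D$, so $x_n\cdot e_i\to+\infty$; translating by $x_n\cdot e_i$ along $e_i$ and applying Assumption~\ref{assumption-limiting}, the translates of $u_{I,\tau_i}$ converge locally (up to a subsequence) to a solution of the limiting equation which, by \eqref{eq-uI} and since $u^i_l$ is almost‑planar, must coincide up to time shift with the limiting almost‑planar front $v_l$ facing $-e_i$; as $c^i_l>0$ one has $(v_l)_t>0$, contradicting the vanishing of the limit of $(u_{I,\tau_i})_t$ at the limit point. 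The case $t_n\to+\infty$ is identical, using Corollary~\ref{cor2} in place of \eqref{eq-uI}: for $n$ large $x_n\in\mathcal H_j$ with $j\in J$ and $|x_n\cdot e_j-c^j_r t_n|\le D$, the translated solutions converge to the limiting almost‑planar front $v_r$ facing $e_j$, and $c^j_r>0$ forces $(v_r)_t>0$, again a contradiction. This yields a uniform lower bound $k>0$ for $(u_{I,\tau_i})_t$ on $\Omega^I_\eta$.

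The main obstacle is the bookkeeping in the case $t_n\to+\infty$: one must first use Lemma~\ref{lemma-uI} to get complete propagation of $u_{I,\tau_i}$ and then Corollary~\ref{cor2} to identify its large‑time limit with the almost‑planar fronts $u^j_r$ and to localise the slices of $\Omega^I_\eta$ around the moving planes $x\cdot e_j=c^j_r t$, so that the translation/compactness argument of Lemma~\ref{muir} can be run. Everything else is a verbatim repetition of the proof of Lemma~\ref{muil} and Lemma~\ref{lemma4.1}, which is why we only indicate the modifications.
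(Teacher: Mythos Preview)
Your proposal is correct and follows exactly the approach the paper intends: the paper simply states that Corollary~\ref{cor-m} is proved ``similar as Lemma~\ref{lemma4.1}'', and Lemma~\ref{lemma4.1} in turn refers to the scheme of Lemma~\ref{muil} with the extra case $t_n\to+\infty$ handled via Corollary~\ref{cor2}. You have correctly identified that one needs Lemma~\ref{lemma-uI} to guarantee complete propagation of $u_{I,\tau_i}$ so that Corollary~\ref{cor2} applies, and your three-case contradiction argument (bounded times, $t_n\to-\infty$, $t_n\to+\infty$) with the translation/compactness step via Assumption~\ref{assumption-limiting} is precisely the intended argument.
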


Since $u_{I,\tau_i}(t,x)$ propagates completely, it is a transition front connecting $0$ and $1$ by Corollary~\ref{cor2}. By \eqref{eq-uI}, one can easily check that there is $T_1<0$ such that $\Gamma_t$, $\Omega_t^{\pm}$ ($t\le T_1$) of $u_{I,\tau_i}(t,x)$ can be denoted by
$$\Gamma_t=\cup_{i\in I} \{x\in \mathcal{H}_i; x\cdot e_i=-c^i_l(t+\tau_i)\}, \hbox{ for $t\le T_1$},$$
and
$$\Omega_t^+=\cup_{i\in I} \{x\in \mathcal{H}_i; x\cdot e_i>-c^i_l(t+\tau_i)\}, \ \Omega^-=\Omega\setminus\overline{\Omega_t^+}, \hbox{ for $t\le T_1$}.$$
Here, even if it means decreasing $T_1$, we assume that $T_1$ is negative enough such that
$$-c^i_l(t+\tau_i)\ge L \hbox{ for all $t\le T_1$ and $i\in I$}.$$
Since $\tau_i\le 0$ for all $i\in I$ and by looking back at the construction of sub- and supersolutions for the existence of the entire solution $u(t,x)$ satisfying Theorem~\ref{Th1}, one knows that $T_1$ can be taken independent of the choice of $I$ and $\tau_i$. Moreover, for $\delta>0$ defined by \eqref{delta-s4}, it follows from Definition~\ref{TF} and \eqref{disO} that there is $A_{\delta}>0$ such that
\begin{eqnarray}\label{Ad}
\left\{\begin{array}{lll}
u_{I,\tau_i}(t,x)\ge 1-\delta &&\hbox{ for $t\le T_1$ and $x\in \overline{\cup_{i\in I} \mathcal{H}_i(-c^i_l(t+\tau_i)+A_{\delta})}$},\\
u_{I,\tau_i}(t,x)\le \delta && \hbox{ for $t\le T_1$ and $x\in \overline{\Omega\setminus\cup_{i\in I} \mathcal{H}_i(-c^i_l(t+\tau_i)-A_{\delta})}$}.
\end{array}
\right.
\end{eqnarray}
Notice that $A_{\delta}$ is independent of the choice of $I$ and $\tau_i$. One may need to decrease $T_1$ again such that the sets in \eqref{Ad} are well-defined for $t\le T_1$.

\begin{lemma}\label{lemma-s1}
Let $I$ be a non-empty subset of $\{1,\cdots,m\}$ and $\{1,\cdots,m\}\setminus I\neq \emptyset$. For any $L_i\ge L$ ($i\in I$), let $v_0(x)$ be an initial value satisfying
\begin{eqnarray*}
v_0(x)=\left\{\begin{array}{lll}
1, &&\hbox{ for $x\in\overline{\cup_{i\in I}\mathcal{H}_i(L_i)}$},\\
\delta, &&\hbox{ for $x\in\overline{\Omega\setminus\cup_{i\in I}\mathcal{H}_i(L_i)}$},
\end{array}
\right.
\end{eqnarray*}
and $v(t,x)$ be the solution of \eqref{eq1.1} for $t\ge 0$ with $v(0,x)=v_0(x)$.
Then, there exist $R>0$ and $\omega>0$ such that for any family of constants $(L_i)_{i\in I}$ satisfying $L_i\ge L+A_{\delta}+R$ for all $i\in I$, there holds that
$$v(t,x)\le  u_{I,\tau_i}(t+\tau+\omega,x)+\delta e^{-\delta t} \hbox{ for $t\ge 0$ and $x\in\overline{\Omega}$},$$
where
$$\tau=\max_{i\in I}\Big(\frac{A_{\delta}-L_i}{c^i_l}\Big) \hbox{ and } \tau+\tau_i=\frac{A_{\delta}-L_i}{c^i_l}.$$
\end{lemma}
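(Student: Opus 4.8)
The plan is to compare $v$ with an explicit supersolution built from the entire solution $u_{I,\tau_i}$, and then to invoke the parabolic comparison principle. Set
\[
\overline\zeta(t)=t+\tau+\omega-\omega e^{-\delta t},\qquad
\overline u(t,x)=\min\Big(u_{I,\tau_i}\big(\overline\zeta(t),x\big)+\delta e^{-\delta t},\,1\Big),
\]
so that $\overline\zeta(0)=\tau$, $\overline\zeta'(t)=1+\delta\omega e^{-\delta t}>1$ and $\overline\zeta(t)\le t+\tau+\omega$. Since $u_{I,\tau_i}$ is time-increasing by Theorem~\ref{Th1}, we have $\overline u(t,x)\le u_{I,\tau_i}(t+\tau+\omega,x)+\delta e^{-\delta t}$, so the inequality claimed in the lemma will follow once $v\le\overline u$ on $[0,\infty)\times\overline{\Omega}$ is established.

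First I would fix the parameters. Keep $\delta$ as in \eqref{delta-s4}, let $T_1<0$ and $A_\delta>0$ be as in \eqref{Ad}, and let $k>0$ be the time-derivative lower bound from Corollary~\ref{cor-m} for $\eta=\delta$ — all of these were arranged to be independent of the choice of $I$ and of $(\tau_i)_{i\in I}$. Choose $\omega>0$ with $k\omega\ge\delta+M$, where $M=\sup_{x\in\R^N,u\in[0,1]}|f_u(x,u)|$, and choose $R>0$ large enough (depending only on $L$, $A_\delta$, $T_1$ and the speeds $c^i_l$) so that $L_i\ge L+A_\delta+R$ for all $i\in I$ forces $\tau=\max_{i\in I}\big((A_\delta-L_i)/c^i_l\big)\le T_1$ and $-c^i_l(\tau+\tau_i)=L_i-A_\delta\ge L$ for every $i\in I$; the resulting $\tau_i$, defined by $\tau+\tau_i=(A_\delta-L_i)/c^i_l$, are then non-positive, so $u_{I,\tau_i}$ is a legitimate entire solution.

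Next I would check the ordering. For the initial datum, $\tau\le T_1$ and $-c^i_l(\tau+\tau_i)+A_\delta=L_i$ together with \eqref{Ad} give $u_{I,\tau_i}(\tau,x)\ge 1-\delta$ for $x\in\overline{\bigcup_{i\in I}\mathcal{H}_i(L_i)}$, whence $\overline u(0,x)=1=v_0(x)$ there, while $\overline u(0,x)\ge\delta=v_0(x)$ on the complement because $u_{I,\tau_i}>0$; the Neumann condition for $\overline u$ is inherited from $u_{I,\tau_i}$ and from the solution $1$, the minimum of two supersolutions being a supersolution. It then remains to verify $N(t,x):=\overline u_t-\mathrm{div}(A\nabla\overline u)+q\cdot\nabla\overline u-f(x,\overline u)\ge 0$ on the set where $\overline u=u_{I,\tau_i}(\overline\zeta(t),x)+\delta e^{-\delta t}<1$ (on $\{\overline u=1\}$ one has $N=0$ since $f(x,1)=0$). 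Using that $u_{I,\tau_i}$ solves \eqref{eq1.1}, one obtains, exactly as in the proof of Lemma~\ref{lemma-uI},
\[
N(t,x)=\delta\omega e^{-\delta t}\,(u_{I,\tau_i})_t\big(\overline\zeta(t),x\big)-\delta^2 e^{-\delta t}+f\big(x,u_{I,\tau_i}(\overline\zeta(t),x)\big)-f\big(x,\overline u(t,x)\big),
\]
and one splits into three regimes according to the size of $u_{I,\tau_i}(\overline\zeta(t),x)$. If it is $\le\delta$, then $\overline u\le 2\delta\le\sigma$ and the uniform stability of $0$ (as used in Lemma~\ref{lemma2.1}) gives $f(x,u_{I,\tau_i})-f(x,\overline u)\ge\gamma\delta e^{-\delta t}$, hence $N\ge(\gamma-\delta)\delta e^{-\delta t}\ge 0$ since $(u_{I,\tau_i})_t>0$; if it is $\ge 1-\delta$, the symmetric estimate near $1$ (as in Lemma~\ref{lemma2.2}) gives the same bound; and if $\delta<u_{I,\tau_i}(\overline\zeta(t),x)<1-\delta$, then $(\overline\zeta(t),x)\in\Omega^I_\delta$, so $(u_{I,\tau_i})_t(\overline\zeta(t),x)\ge k$ by Corollary~\ref{cor-m}, and bounding $|f(x,u_{I,\tau_i})-f(x,\overline u)|\le M\delta e^{-\delta t}$ gives $N\ge\delta e^{-\delta t}(\omega k-\delta-M)\ge 0$. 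The comparison principle (admitting the discontinuous initial datum $v_0$) then yields $v\le\overline u$, which is the claim.

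The main obstacle I anticipate is the uniformity of the constants: $R$ and $\omega$ must be chosen once for all, independently of $(L_i)_{i\in I}$, whereas $u_{I,\tau_i}$ itself varies with $(L_i)$ through the shifts $\tau_i$. This is precisely why the construction preceding the lemma was arranged so that $T_1$, $A_\delta$, and the derivative bound $k$ of Corollary~\ref{cor-m} do not depend on $I$ or on $(\tau_i)_{i\in I}$; granting that, the rest is the routine but somewhat lengthy bookkeeping of the three-regime supersolution estimate, essentially identical to the computations carried out in Lemmas~\ref{lemma2.1}--\ref{lemma2.2} and in the proof of Lemma~\ref{lemma-uI}.
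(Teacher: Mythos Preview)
Your proposal is correct and follows essentially the same route as the paper: the supersolution $\overline u(t,x)=\min\big(u_{I,\tau_i}(\overline\zeta(t),x)+\delta e^{-\delta t},1\big)$ with $\overline\zeta(t)=t+\tau+\omega-\omega e^{-\delta t}$, the choice $k\omega\ge\delta+M$, the verification of the initial ordering via \eqref{Ad}, and the three-regime computation of $N(t,x)$ are exactly what the paper does. Your closing remark about the uniformity of $k$ (and hence $\omega$) in $I$ and $(\tau_i)$ is the only genuinely delicate point, and the paper handles it just as you suggest, by relying on the fact that $T_1$, $A_\delta$ and the bound of Corollary~\ref{cor-m} were set up independently of these data.
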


\begin{proof}
Let $T_1$ be defined in \eqref{Ad}. Let $R$ be defined by $R:=\min_{i\in\{1,\cdots,m\}} (-c^i_l T_1)>0$ which implies 
\be\label{eq-R}
\max_{i\in \{1,\cdots,m\}} (-R/c^i_l)=T_1.
\ee
For any family of constants $(L_i)_{i\in I}$ satisfying $L_i\ge L+A_{\delta}+R$, let
$$\tau=\max_{i\in I}\Big(\frac{A_{\delta}-L_i}{c^i_l}\Big) \hbox{ and } \tau+\tau_i=\frac{A_{\delta}-L_i}{c^i_l}.$$
Notice that $\tau\le T_1$ by \eqref{eq-R} and $\tau_i\le 0$ for all $i\in I$.
Let $u_{I,\tau_i}(\xi,x)$ be the entire solution satisfying \eqref{eq-uI}. Define
$$\Omega_{\xi}^+(\delta):=\{\xi\in\R, x\in\overline{\Omega}; u_{I,\tau_i}\ge 1-\delta\} \hbox{ and } \Omega_{\xi}^-(\delta):=\{\xi\in\R, x\in\overline{\Omega}; u_{I,\tau_i}\le \delta\}.$$
Then $\delta\le u_{I,\tau_i}(\xi,x)\le 1-\delta$ for $(\xi,x)\in\overline{\Omega}\setminus \Big(\Omega_{\xi}^+(\delta)\cup\Omega_{\xi}^-(\delta)\Big)$. By Corollary~\ref{cor-m}, there is $k>0$ such that $(u_{I,\tau_i})_\xi(\xi,x)\ge k$ for $(\xi,x)\in\overline{\Omega}\setminus \Big(\Omega_{\xi}^+(\delta)\cup\Omega_{\xi}^-(\delta)\Big)$. Let $\omega>0$ such that
$$
k\omega\ge \delta+M,
$$
where $M=\sup_{x\in \R^N, u\in [0,1]} |f_u(x,u)|$.

For any $t\ge 0$ and $x\in\overline{\Omega}$, define
$$\overline{u}(t,x)=\min\Big(u_{I,\tau_i}(\overline{\zeta}(t),x)+\delta e^{-\delta t},1\Big),$$
where
$$\overline{\zeta}(t)=t+\tau-\omega e^{-\delta t}+\omega$$
We only have to check that $\overline{u}(t,x)$ is a supersolution of the problem satisfied by $v(t,x)$.

At the time $t=0$, one has that $\overline{\zeta}(0)=\tau\le T_1$ and
$$\overline{u}(0,x)\ge \min\Big(u_{I,\tau_i}(\tau,x)+\delta,1\Big)\ge \delta\ge v_0(x) \hbox{ for $x\in\overline{\Omega\setminus\cup_{i\in I}\mathcal{H}_i(L_i)}$}.$$
 By the definition of $\tau$ and $\tau_i$, $x\in\overline{\mathcal{H}_i(L_i)}$ implies that $x\cdot e_i\ge -c^i_l (\tau+\tau_i)+A_{\delta}$ for any $i\in I$. It then follows from \eqref{Ad} that
$$\overline{u}(0,x)\ge \Big(1-\delta+\delta,1\Big)=1\ge v_0(x) \hbox{ for  $x\in\overline{\cup_{i\in I}\mathcal{H}_i(L_i)}$}.$$
Thus, $\overline{u}(0,x)\ge v_0(x)$ for all $x\in\overline{\Omega}$. Moreover, it is obvious that $\nu A(x)\nabla\overline{u}= 0$ on $x\in\partial \Omega$.

Similar as the proof of Lemma~\ref{lemma-uI}, one can easily check that
$$N(t,x):=\overline{u}_t-\hbox{div}(A(x)\nabla\overline{u})+q(x)\cdot \nabla\overline{u}-f(x,\overline{u})\ge 0,$$
for any $t\ge 0$ and $x\in\overline{\Omega}$ such that $\overline{u}(t,x)<1$.

Then, by the comparison principle and $(u_{I,\tau_i})_{\xi}(\xi,x)>0$, one concludes that
$$v(t,x)\le \overline{u}(t,x)\le u_{I,\tau_i}(t+\tau+\omega,x)+\delta e^{-\delta t}, \hbox{ for $t\ge 0$ and $x\in\overline{\Omega}$}.$$
 This completes the proof.
\end{proof}
\vskip 0.3cm

Fix any $i\in\{1,\cdots,m\}$. Let $u_i(t,x)$ be the entire solution emanating from $u^i_l(t,x)$ in $\mathcal{H}_i$ satisfying \eqref{front}.
Define $ J:=\{1,\cdots,m\}\setminus \{i\}$.
Let $\widetilde{u}_i(t,x)$ be the entire solution emanating from $u^j_l(t,x)$ in $\mathcal{H}_j$ for all $j\in J$, that is,
\be\label{eq-wui}
\left\{\baa{rcll}
\widetilde{u}_i(t,x)\!-\!u^j_l(t,x) & \!\!\!\to\!\!\! & 0\!\! & \text{uniformly in $\overline{\mathcal{H}_j}\cap\overline{\Omega}\,$ for every $j\in J$},\vspace{3pt}\\
\widetilde{u}_i(t,x) & \!\!\to\!\! & 0\!\! & \displaystyle\text{uniformly in $\overline{\Omega\setminus \mathop{\bigcup}_{j\in J}\mathcal{H}_j}$},
\eaa\right.
\ee
as $t\rightarrow -\infty$. By Lemma~\ref{lemma-uI}, $\widetilde{u}(t,x)$ propagates completely. Then, by Corollary~\ref{cor2}, there exists a real number $\eta_i$ such that
\be\label{ltb-wui}
\left\{\baa{rcll}
\widetilde{u}_i(t,x)-u^i_r(t+\eta_i,x) & \!\!\!\to\!\!\! & 0\!\! & \text{uniformly in $\overline{\mathcal{H}_i}\cap\overline{\Omega}\,$},\vspace{3pt}\\
\widetilde{u}_i(t,x) & \!\!\!\to\!\!\! & 1\!\! & \displaystyle\text{uniformly in $\overline{\Omega\setminus \mathcal{H}_i}$},\eaa\right.
\ee
as $t\rightarrow +\infty$. Assume that $\eta_i=0$ even if it means shifting $\widetilde{u}_i(t,x)$ in time.
Therefore, there is $T_2>0$ and $A_{\delta}$ such that for all $t\ge T_2$,
\begin{eqnarray}\label{eq-wu-Ad}
\left\{\begin{array}{lll}
\widetilde{u}_{i}(t,x)\le \delta &&\hbox{ for $t\ge T_2$ and $x\in \overline{ \mathcal{H}_i(c^i_r t+A_{\delta})}$},\\
\widetilde{u}_{i}(t,x)\ge 1-\delta && \hbox{ for $t\ge T_2$ and $x\in \overline{\Omega\setminus  \mathcal{H}_i(c^i_r t-A_{\delta})}$}.
\end{array}
\right.
\end{eqnarray}
Even if it means increasing $T_2$, assume that $c^i_r t-A_{\delta}\ge L$ for all $t\ge T_2$ which means that the sets in \eqref{eq-wu-Ad} are well-defined.

Before we deduce some properties of $u_i(t,x)$ and $\widetilde{u}_i(t,x)$, we need the following lemma.

\begin{lemma}\label{uil-lemma}
There exist $C>0$, $r_i>0$, $\varphi_i(x)>0$ and $D>0$ such that
$$u^i_l(t,x)\ge 1-C e^{-r_i(x\cdot e_i+c^i_l t)} \varphi_i(x), \hbox{ for $t\in \R$ and $x\in\overline{\widetilde{\mathcal{H}}_i}$ such that $x\cdot e_i\ge -c^i_l t+D$}.$$
\end{lemma}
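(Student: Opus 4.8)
The plan is to establish the exponential approach of $u^i_l$ to the state $1$ behind the front by constructing an explicit supersolution of the linearized problem near $u=1$ and using the stability condition \eqref{eq-F}. First I would fix the geometry: write $\widetilde{\mathcal{H}}_i=\{(x_1,x')\in\R^N: x'\in\widetilde{\omega}_i(x_1)\}$ (up to rotation, with $e_i$ the $x_1$-direction), recall from Assumption~\ref{assumption-l} that $\Gamma_t=\{x\cdot e_i=-c^i_l t\}$ and that $u^i_l$ is an invasion of $0$ by $1$, so $(u^i_l)_t>0$ by Lemma~\ref{muil}. By the transition-front property \eqref{eq1.7} together with \eqref{disO}, there is a distance $M_0>0$ such that $u^i_l(t,x)\ge 1-\sigma$ whenever $x\cdot e_i+c^i_l t\ge M_0$, where $\sigma$ is the stability radius from \eqref{eq-F}. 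In that region the function $w:=1-u^i_l$ satisfies $w\in[0,\sigma]$ and, using $f(x,u)\ge\gamma(1-u)$ on $[1-\sigma,1]$, one gets the differential inequality
\[
w_t-\mathrm{div}(\widetilde A(x)\nabla w)+\widetilde q(x)\cdot\nabla w=-f(x,1-w)\le-\gamma w
\]
on the shifted domain $\{x\cdot e_i+c^i_l t\ge M_0\}$, with Neumann condition $\nu\widetilde A(x)\nabla w=0$ on the boundary.

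Next I would build a supersolution of this linear inequality of the product form $\overline w(t,x)=C\,e^{-r_i(x\cdot e_i+c^i_l t)}\varphi_i(x)$. The natural choice of $\varphi_i$ is precisely the function $\psi_i$ produced by Lemma~\ref{lemma-psi} (applied with $\widetilde{\mathcal H}_i$ and a suitably small parameter $\beta$): indeed \eqref{psi} was designed exactly so that $e^{-r_i(x\cdot e_i)}\psi_i(x)$ handles both the divergence-form operator and the conormal boundary condition when one peels off the exponential in direction $e_i$. Plugging $\overline w$ into the inequality, the exponential factor cancels and one is left with requiring
\[
-r_i c^i_l\psi_i-\mathrm{div}(\widetilde A\nabla\psi_i)+r_i\big(\mathrm{div}(\widetilde A e_i\psi_i)+e_i\widetilde A\nabla\psi_i\big)+\widetilde q\cdot\nabla\psi_i-r_i\widetilde q e_i\psi_i-r_i^2(e_i\widetilde A e_i)\psi_i\ \ge\ -\gamma\psi_i ,
\]
and the conormal condition reduces to $\nu\widetilde A(x)(r_i\psi_i e_i+\nabla\psi_i)\ge0$, which is the second line of \eqref{psi}. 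Choosing $\beta=\gamma/2$ (say) in Lemma~\ref{lemma-psi} gives a $\lambda_i>0$ and $\psi_i$ verifying \eqref{psi}; then taking $r_i:=\min(\lambda_i,\ \gamma/(2c^i_lN_0))$ with $N_0:=\sup_{\widetilde{\mathcal H}_i}\psi_i/\inf_{\widetilde{\mathcal H}_i}\psi_i$ makes the extra term $-r_ic^i_l\psi_i$ absorbable into the slack $(\gamma-\beta)\psi_i=\tfrac{\gamma}{2}\psi_i$, so $\overline w$ is a supersolution. Set $D:=M_0$ and fix $C$ large enough that $C\,e^{-r_i M_0}\inf\psi_i\ge\sigma\ge w$ on the parabolic-type boundary $\{x\cdot e_i+c^i_l t=M_0\}$ (this uses $\inf_{\widetilde{\mathcal H}_i}\psi_i>0$ from Lemma~\ref{lemma-psi}); since $\overline w\to+\infty$ uniformly is not needed, only that $\overline w\ge\sigma\ge w\ge0$ there and $\overline w\ge0\ge$ nothing at $x\cdot e_i\to+\infty$.

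Finally I would invoke the comparison principle on the region $\{(t,x): x\cdot e_i+c^i_l t\ge M_0\}$ — this is a space-time cylinder in the moving frame, unbounded but with $w$ bounded and $\overline w$ bounded, so the maximum principle applies (one may first work on truncated domains $\{M_0\le x\cdot e_i+c^i_l t\le n\}$ and let $n\to\infty$, using that both $w$ and $\overline w$ stay in $[0,1]$-type bounds) — to conclude $w(t,x)\le\overline w(t,x)$, i.e.
\[
u^i_l(t,x)\ge 1-C\,e^{-r_i(x\cdot e_i+c^i_l t)}\psi_i(x)\quad\text{for } x\cdot e_i\ge -c^i_l t+D,
\]
which is the claim with $\varphi_i=\psi_i$. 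The main obstacle I anticipate is the boundary/comparison bookkeeping: the inequality for $w$ only holds on a moving half-cylinder, so one must check carefully that the comparison argument can be localized there — in particular that on the lateral interface $\{x\cdot e_i+c^i_l t=M_0\}$ one genuinely has $\overline w\ge w$ for \emph{all} $t$, which is where the uniform-in-$t$ bound $w\le\sigma$ from the transition-front structure \eqref{eq1.7} (not merely pointwise decay) is essential, and that the Neumann condition is compatible with $\overline w$ via the second line of \eqref{psi}. Everything else is the routine substitution computation sketched above.
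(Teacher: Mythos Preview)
Your barrier construction and the differential inequality for $w=1-u^i_l$ match the paper's approach closely (the paper writes $\phi=1-\overline w$ and verifies it is a subsolution, absorbing the extra $-r_ic^i_l\varphi_i$ term directly into a variant of Lemma~\ref{lemma-psi}, displayed as~\eqref{eq-vphi}, rather than shrinking the exponent afterwards as you do). The substantive issue is the final comparison step.

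There is a genuine gap in your justification of $w\le\overline w$ on $D^+=\{x\cdot e_i+c^i_l t\ge D\}$. This region is unbounded in time in \emph{both} directions; there is no initial time at which to launch a parabolic comparison. Your proposed truncation $\{M_0\le x\cdot e_i+c^i_l t\le n\}$ does not help: on any time-slice $t=t_0$ the truncated spatial domain still contains points with $x\cdot e_i+c^i_l t_0$ close to $n$, where $\overline w\approx Ce^{-r_in}\psi_i$ is tiny, while the only a~priori information on $w$ there is $w\le\sigma$ (the exponential decay of $w$ is precisely what you are trying to prove, so it cannot be assumed at the initial time). The paper closes this with a sliding argument: set $\varepsilon_*=\inf\{\varepsilon>0:\ u^i_l+\varepsilon\ge\phi\text{ in }D^+\}$ (finite since $\phi\le1$) and show $\varepsilon_*=0$ by contradiction. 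If $\varepsilon_*>0$, along a minimizing sequence $(t_n,x_n)$ one has $u^i_l(t_n,x_n)+\varepsilon_*-\phi(t_n,x_n)\to0$; the moving coordinate $x_n\cdot e_i+c^i_l t_n$ must remain bounded (else both $u^i_l$ and $\phi$ tend to $1$), so one can pick nearby points $(t_n-\tau,y_n)$ lying on the lateral interface $\{x\cdot e_i+c^i_l t=D\}$ at uniformly bounded distance. Linear parabolic estimates then force $v:=u^i_l+\varepsilon_*-\phi\to0$ at $(t_n-\tau,y_n)$ as well, contradicting $v\ge\varepsilon_*$ on that interface (from \eqref{phige}). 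You should replace the direct maximum-principle invocation with this sliding device, which is the standard way to run comparisons for entire-in-time solutions on moving half-spaces.
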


\begin{proof}
By a similar proof of Lemma~\ref{lemma2.1}, one knows that there exist $r_i>0$ and $\varphi_i(x)\in C^2$ satisfying
\begin{eqnarray}\label{eq-vphi}
\left\{\begin{array}{lll}
&-$div$(\widetilde{A}(x) \nabla \varphi_i) +r_i ($div$(\widetilde{A}(x) e_i \varphi_i )+e_i\widetilde{A}\nabla \varphi_i) +\widetilde{q}(x)\cdot \nabla\varphi_i&\\
&-r_i \widetilde{q}(x) e_i \varphi_i -r_i^2 (e_i\widetilde{A}(x) e_i) \varphi_i-r_i c^i_l \varphi_i \ge -\gamma \varphi_i, & x\in \widetilde{\mathcal{H}}_i,\\
&\nu \widetilde{A}(x)(r_i \varphi_i(x) e_i +\nabla \varphi_i)\ge 0, &  x\in\partial\widetilde{\mathcal{H}}_i,
\end{array}
\right.
\end{eqnarray}
where $\gamma$ is defined by \eqref{eq-F}, and $0<\inf_{\widetilde{\mathcal{H}}_i}\varphi_i(x)\le \sup_{\widetilde{\mathcal{H}}_i}\varphi_i(x)<+\infty$. Let $\delta>0$ be defined by \eqref{delta-s4}, $\widetilde{\delta}:=\delta/\|\varphi_i\|_{L^{\infty}(\widetilde{\mathcal{H}}_i)}$ and $\delta'=\widetilde{\delta} \inf_{\widetilde{\mathcal{H}}_i}\varphi_i(x)$. By Assumption~\ref{assumption-l}, there is $D>0$ such that $u^i_l(t,x)\ge 1-\delta'$ for $t\in \R$ and $x\in\widetilde{\mathcal{H}}_i$ such that $x\cdot e_i\ge -c^i_l t+D$.
Consider the domain
$$D^+:=\{(t,x)\in\R\times\widetilde{\mathcal{H}}_i; x\cdot e_i\ge -c^i_l t+D\}.$$
Note that $u^i_l(t,x)\ge 1-\delta'$ for $(t,x)\in D^+$.
Define $\phi(t,x):=1-\widetilde{\delta} e^{-r_i(x\cdot e_i+c^i_l t-D)} \varphi_i(x)$. Then, $\phi(t,x)\ge 1-\widetilde{\delta} \|\varphi_i\|_{L^{\infty}}\ge 1-\delta$ for $t\in \R$ and $x\in\widetilde{\mathcal{H}}_i$ such that $x\cdot e_i\ge -c^i_l t+D$. By \eqref{eq-F}, \eqref{delta-s4} and \eqref{eq-vphi}, one can easily check that $\phi(t,x)$ satisfies
\begin{eqnarray}\label{phi}
\left\{\begin{array}{lll}
&\phi_t-$div$(\widetilde{A}(x)\nabla \phi)+\widetilde{q}(x)\cdot \nabla \phi\le \widetilde{f}(x,\phi), \quad &(t,x)\in D^+,\\
&\nu \widetilde{A}(x)\nabla \phi\le 0,& (t,x)\in \partial D^+,
\end{array}
\right.
\end{eqnarray}
where $\partial D^+=\{t\in R,x\in\partial\widetilde{\mathcal{H}}_i; x\cdot e_i\ge -c^i_l t+D\}$.
Moreover, one has that
\be\label{phige}
\phi(t,x)\le 1-\widetilde{\delta}\varphi_i(x)\le 1-\delta'\le  u^i_l(t,x) \hbox{ for all $(t,x)\in D^+$ such that $x\cdot e_i=-c^i_l t+D$}.
\ee

Define
$$\varepsilon_*=\inf\{\varepsilon>0; u^i_l(t,x)+\varepsilon\ge \phi(t,x) \hbox{ in $D^+$}\}.$$
Since $\phi(t,x)\le 1$, then $\varepsilon_*<+\infty$. One only has to prove that $\varepsilon_*=0$.

Assume by contradiction that $\varepsilon_*>0$. Then, there exist sequences $0<\varepsilon_n<\varepsilon_*$ and $(t_n,x_n)_{n\in \mathbb{N}}$ in $D^+$ such that
\be\label{en}
\varepsilon_n\rightarrow \varepsilon_* \hbox{ as $n\rightarrow +\infty$ and } u^i_l(t_n,x_n)+\varepsilon_n < \phi(t_n,x_n) \hbox{ for all $n$}.
\ee
We claim that $x_n\cdot e_i+c^i_l t_n<+\infty$. Otherwise, $u^i_l(t_n,x_n)\rightarrow 1$ and $\phi(t_n,x_n)\rightarrow 1$ which contradicts the above inequality.
Now, define $v(t,x)=u^i_l(t,x)+\varepsilon^*-\phi(t,x)$. Then, $v(t,x)\ge 0$ in $D^+$. By \eqref{en}, one has that $v(t_n,x_n)\rightarrow 0$ as $n\rightarrow +\infty$. By \eqref{phige}, one knows that  $v(t,x)\ge \varepsilon^*>0$ for $(t,x)\in D^+$ such that $x\cdot e_i=-c^i_l t +D$. Since standard parabolic estimates imply that $\|(u^i_l)_t\|_{\infty}$, $\|\nabla u^i_l\|_{L^{\infty}}$ are bounded, there is $\rho>0$ such that $x_n\cdot e_i+c^i_l t_n\ge D+\rho$. Take $\tau>0$ and $y_n$ such that $y_n\cdot e_i+c^i_l (t_n-\tau)= D$. Then, $|x_n-y_n|<+\infty$ by $|x\cdot e_i+c^i_l t_n|<+\infty$ and
\be\label{vge}
v(t_n-\tau,y_n)\ge \varepsilon_*>0.
\ee
Since $\widetilde{f}(x,u)$ is decreasing in $u$ for $x\in \R^N$ and $u\in [1-\sigma,1]$, it follows from \eqref{eq-extension} and $u^i_l(t,x)\ge 1-\delta'\ge 1-\delta\ge 1-\sigma$ in $D^+$ that
\be\label{uil+e}
(u^i_l+\varepsilon_*)_t-\hbox{div}(\widetilde{A}(x)\nabla (u^i_l+\varepsilon_*))+\widetilde{q}(x)\cdot \nabla (u^i_l+\varepsilon_*)\ge \widetilde{f}(x,u^i_l+\varepsilon) \hbox{ in $D^+$}.
\ee
By \eqref{phi} and \eqref{uil+e}, one gets that  $v(t,x)$ satisfies
$$v_t-\hbox{div}(\widetilde{A}(x)\nabla v)+\widetilde{q}(x)\cdot \nabla v\ge b(t,x)v, \hbox{ in $D^+$},$$
where $b(t,x)$ is bounded in $D^+$.
Then, by linear parabolic estimates, one has that
$$v(t_n-\tau, y_n)\rightarrow 0, \hbox{ as $n\rightarrow +\infty$},$$
which contradicts \eqref{vge}. Therefore, $\varepsilon_*=0$ and $u^i_l(t,x)\ge \phi(t,x)$ in $D^+$. This completes the proof.
\end{proof}

\begin{remark}\label{remark-uir}
Similar as Lemma~\ref{uil-lemma}, one has the following property of $u^i_r(t,x)$, that is,
$$u^i_r(t,x)\ge 1-C e^{r_i(x\cdot e_i-c^i_r t)} \varphi_i(x), \hbox{ for $t\in \R$ and $x\in\overline{\widetilde{\mathcal{H}}_i}$ such that $x\cdot e_i\le c^i_r t-D$}.$$
\end{remark}

By Lemma~\ref{lemma2.1} and Lemma~\ref{uil-lemma}, one has the following corollary.

\begin{corollary}\label{cor4.7}
There exist $T_1<0$, $A>0$, $r_i>0$ and $\lambda_i>0$ such that
$$u_i(t,x)\ge 1- \delta e^{-r_i(x\cdot e_i+c^i_l t-A)}-\delta e^{-\lambda_i (x\cdot e_i-L)} \hbox{ for $t\le T_1$ and $x\in\overline{\mathcal{H}_i(-c^i_l t+A)}$}.$$
\end{corollary}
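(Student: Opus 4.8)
The plan is to read off the desired lower bound by combining the subsolution $\underline u$ constructed in Section~2 with the exponential estimate of Lemma~\ref{uil-lemma}. First, note that $u_i$ is precisely the entire solution built in Section~2: it satisfies \eqref{front}, which for the branch $\mathcal H_i$ is the condition \eqref{frontlike-2.1}, and such a solution is unique by the uniqueness established in Section~2. Hence, by \eqref{lunl} and passing to the limit $n\to+\infty$, one gets $u_i(t,x)\ge\underline u(t,x)$ for all $t<T_\varepsilon$ and $x\in\overline\Omega$, where on $\overline{\mathcal H_i(L)}$
\[
\underline u(t,x)\ \ge\ u^i_l(\underline\zeta(t),x)-\widetilde\delta\,e^{-\lambda_i(x\cdot e_i-L)}\psi_i(x),\qquad \underline\zeta(t)=t-\omega e^{\delta t},
\]
the inequality being just $\max(a,0)\ge a$, so no case distinction is needed. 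Since $\widetilde\delta\,\psi_i(x)\le\delta$ on $\overline{\mathcal H_i(L)}$ by the choice of $\widetilde\delta$ in Section~2.1, it suffices to bound $u^i_l(\underline\zeta(t),x)$ from below by $1-\delta\,e^{-r_i(x\cdot e_i+c^i_l t-A)}$ on the advertised region.

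Next I would apply Lemma~\ref{uil-lemma} to $u^i_l(\underline\zeta(t),\cdot)$ (note $x\in\mathcal H_i\subset\widetilde{\mathcal H}_i$ whenever $x\cdot e_i\ge L>0$): whenever $x\cdot e_i\ge -c^i_l\underline\zeta(t)+D$ one has $u^i_l(\underline\zeta(t),x)\ge 1-C\,e^{-r_i(x\cdot e_i+c^i_l\underline\zeta(t))}\varphi_i(x)$. Writing $c^i_l\underline\zeta(t)=c^i_l t-c^i_l\omega e^{\delta t}$ and using $0\le e^{\delta t}\le 1$ for $t\le 0$ gives $e^{-r_i(x\cdot e_i+c^i_l\underline\zeta(t))}\le e^{r_ic^i_l\omega}\,e^{-r_i(x\cdot e_i+c^i_l t)}$, and $\varphi_i$ is bounded, whence $u^i_l(\underline\zeta(t),x)\ge 1-C\|\varphi_i\|_{L^\infty}e^{r_ic^i_l\omega}\,e^{-r_i(x\cdot e_i+c^i_l t)}$. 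I would then fix $A>0$ large enough that simultaneously (i) $C\|\varphi_i\|_{L^\infty}e^{r_ic^i_l\omega}\le\delta\,e^{r_iA}$ and (ii) $A\ge c^i_l\omega+D$. Condition (ii) ensures that $x\cdot e_i\ge -c^i_l t+A$ forces $x\cdot e_i\ge -c^i_l\underline\zeta(t)+D$ (because $-c^i_l\underline\zeta(t)\le -c^i_l t+c^i_l\omega$ for $t\le 0$), so that Lemma~\ref{uil-lemma} indeed applies on the region $\overline{\mathcal H_i(-c^i_l t+A)}$, while condition (i) upgrades the last inequality to $u^i_l(\underline\zeta(t),x)\ge 1-\delta\,e^{-r_i(x\cdot e_i+c^i_l t-A)}$.

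Finally I would take $T_1<0$ with $T_1\le T_\varepsilon$ and small enough that $-c^i_l T_1+A\ge L$, so that for $t\le T_1$ the set $\overline{\mathcal H_i(-c^i_l t+A)}$ is contained in $\overline{\mathcal H_i(L)}$; combining the three displayed inequalities yields, for $t\le T_1$ and $x\in\overline{\mathcal H_i(-c^i_l t+A)}$,
\[
u_i(t,x)\ \ge\ u^i_l(\underline\zeta(t),x)-\delta\,e^{-\lambda_i(x\cdot e_i-L)}\ \ge\ 1-\delta\,e^{-r_i(x\cdot e_i+c^i_l t-A)}-\delta\,e^{-\lambda_i(x\cdot e_i-L)},
\]
which is the claim. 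The only mildly delicate point is the bookkeeping of the time shift $\underline\zeta(t)$ against $t$ inside the exponential together with the joint choice of $A$, which must both absorb the multiplicative constant $C\|\varphi_i\|_{L^\infty}e^{r_ic^i_l\omega}$ and keep $\overline{\mathcal H_i(-c^i_l t+A)}$ within the domain of validity of Lemma~\ref{uil-lemma}; everything else is a direct assembly of results already proved in Sections~2 and~4.
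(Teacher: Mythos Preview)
Your proof is correct and follows essentially the same route as the paper: combine the subsolution bound $u_i\ge\underline u$ from Section~2 with the exponential lower bound of Lemma~\ref{uil-lemma}, then absorb the multiplicative constants and the time shift into a single large $A$. The only cosmetic difference is that the paper first simplifies $\underline\zeta(t)\ge t-\omega$ via the time-monotonicity of $u^i_l$ before invoking Lemma~\ref{uil-lemma}, whereas you carry $\underline\zeta(t)$ through and bound $e^{\delta t}\le 1$ directly inside the exponential; the bookkeeping is equivalent.
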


\begin{proof}
Let $\delta$ be defined by \eqref{delta-s4}. By Lemma~\ref{lemma2.1}, there exist $T_1<0$, $\lambda_i>0$, $\psi_i(x)$ and $\omega>0$ such that
\be\label{uigeuil}
u_i(t,x)\ge u^i_l(t-\omega,x)-\widetilde{\delta} e^{-\lambda_i (x\cdot e_i-L)} \hbox{ for $t\le T_1$ and $x\in\overline{\mathcal{H}_i(L)}$},
\ee
where $\widetilde{\delta}=\delta/\|\psi_i\|_{L^{\infty}(\overline{\mathcal{H}_i(L)})}$. Even if it means decreasing $T_1$, assume that $-c^i_l t+D\ge L$ for all $t\le T_1$ where $D$ is defined by Lemma~\ref{uil-lemma}. Then, $x\in\widetilde{\mathcal{H}}_i$ such that $x\cdot e_i\ge -c^i_l t+D$ means that $x\in \mathcal{H}_i(L)$. It follows from Lemma~\ref{uil-lemma} and \eqref{uigeuil} that there exist $C>0$, $r_i>0$, $\varphi_i(x)>0$ and $D>0$ such that
$$u_i(t,x)\ge 1-C e^{-r_i(x\cdot e_i+c^i_l (t-\omega))}\varphi_i(x)-\delta e^{-\lambda_i(x\cdot e_i-L)},$$
for $t\le T_1$ and $x\in \overline{\mathcal{H}_i}$ such that $x\cdot e_i\ge -c^i_l t+D$.
By taking a constant $A>0$ sufficiently large, one can have the conclusion.
\end{proof}

\begin{lemma}\label{lemma-wu-e}
There exist $T_2>0$, $0<\mu\le \delta$ and $D>0$ such that for any $t_0\ge T_2$,
$$\widetilde{u}_i(t,x)\ge 1-3\delta e^{-\mu (t-t_0)} \hbox{ for $t\ge t_0$ and $x\in\overline{\Omega\setminus\mathcal{H}_i(c^i_r t_0+\frac{c^i_r}{2} (t-t_0)-D)}$}.$$
\end{lemma}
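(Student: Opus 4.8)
The statement is a large-time lower bound for $\widetilde{u}_i$ away from the front region, and the natural strategy is to build an explicit subsolution on the region $\Omega\setminus\mathcal{H}_i(c^i_r t_0+\tfrac{c^i_r}{2}(t-t_0)-D)$ that interpolates between the known behavior of $\widetilde{u}_i$ at time $t_0$ and the limiting value $1$, then invoke the comparison principle. The starting data come from \eqref{eq-wu-Ad}: for $t_0\ge T_2$ we already know $\widetilde{u}_i(t_0,x)\ge 1-\delta$ on $\overline{\Omega\setminus\mathcal{H}_i(c^i_r t_0-A_\delta)}$, which is a slightly larger set than the one appearing in the conclusion (so there is room to work). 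First I would fix $T_2$ so that \eqref{eq-wu-Ad} holds and $c^i_r t-A_\delta\ge L$, and observe that the "bad" boundary through which mass could leak into our region is the moving hyperplane $\{x\cdot e_i=c^i_r t_0+\tfrac{c^i_r}{2}(t-t_0)-D\}$ inside $\mathcal{H}_i$; because this moves at speed $c^i_r/2 < c^i_r$, it stays well behind the front $\{x\cdot e_i\approx c^i_r t\}$ of $\widetilde{u}_i$, so on that boundary $\widetilde{u}_i$ is already close to $1$ for all $t\ge t_0$ (again by \eqref{eq-wu-Ad}, provided $D$ is chosen large relative to $A_\delta$).

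\textbf{Construction of the subsolution.} I would take a subsolution of the form
\[
\underline{v}(t,x)=1-3\delta e^{-\mu(t-t_0)}-\text{(a boundary-layer term)},
\]
where the boundary-layer term is an exponentially decaying function in $x\cdot e_i$ multiplied by the positive bounded function $\psi_i$ (or $\varphi_i$) from Lemma~\ref{lemma-psi}, engineered exactly as in Lemma~\ref{lemma2.1} and Lemma~\ref{lemma3.1} so that the elliptic part produces a favorable sign via \eqref{eq-v} / \eqref{eq-v1} and the Neumann condition is respected ($\nu A\nabla\underline{v}\le 0$ on $\partial\Omega$). The time-decay rate $\mu$ is then chosen small enough — here is where $\mu\le\delta$ and $\mu$ small relative to $\gamma$, $\lambda_i c^i_r$ enter — so that the reaction term, using the stability inequality \eqref{eq-F} on $[1-\sigma,1]$ (valid because $3\delta\le\sigma$ by \eqref{delta-s4}), dominates: $f(x,\underline{v})\ge\gamma\cdot 3\delta e^{-\mu(t-t_0)}$ wins against $\underline{v}_t=3\delta\mu e^{-\mu(t-t_0)}$ as long as $\mu\le\gamma$. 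The factor $3$ in front of $\delta$ is the slack needed to absorb both the boundary-layer contribution at the inner cut $x\cdot e_i\approx L$ (where $\widetilde{u}_i$ may only be controlled by the geometry, not by \eqref{eq-wu-Ad}) and the initial gap $\delta$ at time $t_0$. At $t=t_0$: $\underline{v}(t_0,x)\le 1-3\delta\le 1-\delta\le\widetilde{u}_i(t_0,x)$ on the relevant set; on the moving boundary inside $\mathcal{H}_i$: for $D$ large, $x\cdot e_i=c^i_r t_0+\tfrac{c^i_r}{2}(t-t_0)-D\le c^i_r t-A_\delta$ for all $t\ge t_0$, so $\widetilde{u}_i(t,x)\ge 1-\delta\ge\underline{v}(t,x)$; on the boundary near $x\cdot e_i=L$ one uses the geometry of $\Omega$ (this piece sits in the "core" $B(0,L)$ neighborhood where $\widetilde{u}_i\to 1$ locally uniformly, matching the second line of \eqref{ltb-wui}).

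\textbf{Main obstacle.} The delicate point is the inner boundary: on $\{x\cdot e_i=L\}$ (the mouth of branch $\mathcal{H}_i$ opening into the core), I cannot use \eqref{eq-wu-Ad} — that estimate only controls the far end of $\mathcal{H}_i$. Instead I must use that $\widetilde{u}_i(t,x)\to 1$ \emph{locally uniformly} in $\overline{\Omega}$ (from \eqref{ltb-wui}), so there is $T_2'$ with $\widetilde{u}_i(t,x)\ge 1-3\delta/2$ on the bounded set $\{x\in\overline\Omega: x\cdot e_i\le L+1\}$ for $t\ge T_2'$; enlarging $T_2$ to $\max(T_2,T_2')$ closes this gap provided $t_0\ge T_2$. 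Stitching the three boundary pieces together (the moving hyperplane, the inner mouth, and the lateral Neumann boundary) and checking the comparison principle applies on the unbounded parabolic region — which requires a growth/decay bound on $\widetilde{u}_i-\underline{v}$ at spatial infinity, standard here because $\widetilde{u}_i\in(0,1)$ and $\underline{v}\le 1$ — completes the argument; the conclusion $\widetilde{u}_i(t,x)\ge 1-3\delta e^{-\mu(t-t_0)}$ follows by dropping the nonnegative boundary-layer term. I expect the bookkeeping of the exponents ($\mu\le\min(\delta,\gamma,\lambda_i c^i_r/2)$ roughly) and the verification that the moving cut lies in the region where \eqref{eq-wu-Ad} applies (i.e.\ choosing $D\ge A_\delta$ and $t_0$ large) to be the only real constraints, with everything else a routine repetition of the sub/supersolution computations already carried out in Lemmas~\ref{lemma2.1}, \ref{lemma2.2} and \ref{lemma3.1}.
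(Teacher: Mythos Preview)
Your boundary comparison on the moving hyperplane does not close. You propose to use \eqref{eq-wu-Ad} there: since $c^i_r t_0+\tfrac{c^i_r}{2}(t-t_0)-D\le c^i_r t-A_\delta$ for $D$ large, you get $\widetilde{u}_i(t,x)\ge 1-\delta$ on that slice. But your subsolution is $\underline{v}(t,x)\approx 1-3\delta e^{-\mu(t-t_0)}$, and the inequality $1-\delta\ge 1-3\delta e^{-\mu(t-t_0)}$ \emph{fails} as soon as $t-t_0>\mu^{-1}\ln 3$. A boundary-layer corrector of the type you describe (decaying in $x\cdot e_i$ away from a fixed reference level, as in Lemmas~\ref{lemma2.1}--\ref{lemma3.1}) is small on the moving hyperplane because $x\cdot e_i$ is growing there, so it cannot rescue the comparison. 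The static bound $1-\delta$ from \eqref{eq-wu-Ad} is simply too crude for a boundary that must support an exponentially improving lower barrier.

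What the paper does differently is precisely to upgrade the information on the moving slice before invoking any subsolution. It combines the lower bound of Lemma~\ref{lemma3.1} (trapping $\widetilde{u}_i$ from below by $u^j_r$ minus two exponentially small errors) with Remark~\ref{remark-uir} (the behind-the-front estimate $u^i_r(t,x)\ge 1-Ce^{r_i(x\cdot e_i-c^i_r t)}\varphi_i(x)$) to obtain, for $x$ in $\mathcal{H}_i(L)$ with $x\cdot e_i\le c^i_r t-D$, a bound of the form
\[
\widetilde{u}_i(t,x)\ \ge\ 1-Ce^{r_i(x\cdot e_i-c^i_r(t-T+\tau_1))}\varphi_i(x)-\delta e^{-\delta(t-T)}-\delta e^{-\lambda_i(x\cdot e_i-L)}.
\]
Evaluating this on the half-speed slice $x\cdot e_i=c^i_r t_0+\tfrac{c^i_r}{2}(t-t_0)-D$, where $c^i_r t-x\cdot e_i$ grows like $\tfrac{c^i_r}{2}(t-t_0)$, each of the three error terms decays like $e^{-\mu(t-t_0)}$ with $\mu=\min(\delta,\,r_ic^i_r/2,\,\lambda_ic^i_r/2)$; this yields $\widetilde{u}_i\ge 1-3\delta e^{-\mu(t-t_0)}$ \emph{on the moving boundary itself}. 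Only then does the paper apply the comparison principle with the spatially constant subsolution $1-3\delta e^{-\mu(t-t_0)}$ on the rest of the region (which is now a routine check using \eqref{eq-F} and $\mu\le\delta\le\gamma$). Your ``inner mouth'' discussion at $x\cdot e_i=L$ is a red herring: that surface lies in the interior of $\overline{\Omega\setminus\mathcal{H}_i(\cdots)}$, not on its parabolic boundary, so no separate argument is needed there. The missing ingredient in your plan is the exponential behind-the-front decay of $u^i_r$ (Remark~\ref{remark-uir}), without which the half-speed choice has no force.
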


\begin{proof}
Remember that $\widetilde{u}_i(t,x)$ propagates completely. By Lemma~\ref{lemma3.1}, there exist $T>0$ and $\tau_1\in \R$ such that
$$\widetilde{u}_i(t,x)\ge u_r^i(t-T+\tau_1,x)-\delta e^{-\delta (t-T)}-\widetilde{\delta} e^{- \lambda_i (x\cdot e_i-L)} \psi_i(x) \hbox{ for $t\ge T$ and $x\in \overline{\mathcal{H}_i(L)}$},$$
and
$$\widetilde{u}_i(t,x)\ge 1-\delta, \hbox{ for all $t\ge T$ and $x\in \overline{\Omega\setminus\mathcal{H}_i(L)}$}.$$
By Remark~\ref{remark-uir} and $\widetilde{\delta}\psi_i(x)\le \delta$, there exist $C>0$, $r_i>0$, $\varphi_i(x)>0$ and $D>0$ such that
\be\label{eq-le4.8}
\widetilde{u}_i(t,x)\ge 1- C e^{r_i(x\cdot e_i-c^i_r(t-T+\tau_1))}\varphi_i(x)-\delta e^{-\delta(t-T)}-\delta e^{-\lambda_i(x\cdot e_i-L)},
\ee
for $t\ge T$ and $x\in\overline{\mathcal{H}_i(L)}$ such that $x\cdot e_i\le c^i_r t-D$. By \eqref{eq-le4.8}, even if it means increasing $D$, one can assume that
$$\widetilde{u}_i(t,x)\ge 1-3\delta \hbox{ for $t\ge T$ and $x\in\overline{\mathcal{H}_i(L)}$ such that $x\cdot e_i\le c^i_r t-D$}.$$
Take $T_2\ge T>0$ large enough such that
$$C e^{r_i(-\frac{c^i_r}{2}T_2-D+c^i_r(T-\tau_1))}\varphi_i(x)\le \delta,\ e^{-\delta(T_2-T)}\le 1 \hbox{ and } e^{-\lambda_i(\frac{c^i_r}{2}T_2-D-L)}\le 1.$$
For any $t_0\ge T_2$, $t\ge t_0$ and $x\in \overline{\mathcal{H}_i(L)}$ such that $x\cdot e_i=\frac{c^i_r}{2}t-D$, one has that
$$\widetilde{u}(t,x)\ge 1- \delta e^{-\frac{r_1c^i_r}{2}(t-t_0)}-\delta e^{-\delta (t-t_0)}-\delta e^{-\frac{\lambda_ic^i_r}{2}(t-t_0)}.$$
Let $\mu:=\min(\delta,r_ic^i_r/2,\lambda_i c^i_r/2)$. Then,
$$\widetilde{u}_i(t,x)\ge 1-3\delta e^{-\mu (t-t_0)} \hbox{ for $t\ge t_0$ and $x\in \overline{\mathcal{H}_i(L)}$ such that $x\cdot e_i=c^i_r t_0+\frac{c^i_r}{2}(t-t_0)-D$}.$$
By $\mu\le \delta\le \gamma$, \eqref{eq-F} and \eqref{delta-s4}, one can easily check that $1-3\delta e^{-\mu (t-t_0)}$ is a subsolution of the problem satisfied by $\widetilde{u}(t,x)$ for $t\ge t_0$ and $x\in \overline{\Omega\setminus\mathcal{H}_i(c^i_r t_0+\frac{c^i_r}{2}(t-t_0)-D)}$. Therefore, it follows from the comparison principle that
$$\widetilde{u}_i(t,x)\ge 1-3\delta e^{-\mu(t-t_0)} \hbox{ for $t\ge t_0$ and $x\in \overline{\Omega\setminus\mathcal{H}_i(c^i_r t_0+\frac{c^i_r}{2}(t-t_0)-D)}$}.$$
This completes the proof.
\end{proof}
\vskip 0.3cm

Let $\mu$ be the constant such that Lemma~\ref{lemma-wu-e} holds for $\delta=\min(\gamma,\frac{\sigma}{3})$. From now on, we reset the constant $\delta$ such that
\be\label{delta-reset}
0<\delta\le \min_{i\in \{1,\cdots,m\}}(\gamma,\frac{\sigma}{4},\mu,\frac{\lambda_i c^i_r}{2},\frac{r_i c^i_r}{2}),
\ee
where $\lambda_i$ and $r_i$ are as in Corollary~\ref{cor4.7}.
By increasing $T_2$ and $D$, Lemma~\ref{lemma-wu-e} still holds for such $\delta$ and $\mu$. Then, all of above results hold for such $\delta$.

\begin{lemma}\label{lemma-s2}
Fix any $i\in \{1,\cdots,m\}$. For any $L_i\ge L$ and $R>0$ such that $L_i-R\ge L$, let $w_0(x)$ be an initial value satisfying
\begin{eqnarray}
w_0(x)=\left\{\begin{array}{lll}
1-\delta, &&\hbox{ for $x\in\overline{\mathcal{H}_i}$ such that } L_i-R\le x\cdot e_i\le L_i+R,\\
0, &&\hbox{ for $x\in\overline{\Omega\setminus\mathcal{H}_i(L_i-R)}$ and $x\in\overline{\mathcal{H}_i(L_i+R)}$}£¬
\end{array}
\right.
\end{eqnarray}
and $w(t,x)$  be the solution of \eqref{eq1.1} for $t\ge 0$ with $w(0,x)=w_0(x)$.
Then, there exist $L_0>0$, $R_0>0$ and $\omega>0$ such that for all $R\ge R_0$ and $L_i\ge L_0+2R$, there holds
\be\label{a-w}
w(t,x)\ge u_i(t+\tau_1-\omega,x)+\widetilde{u}_i(t+\tau_2-\omega,x)-1-\delta e^{-\delta t}, \hbox{ for $0\le t\le T$ and $x\in\overline{\Omega}$},
\ee
where
$$\tau_1=\frac{R-L_i+A_{\delta}}{c^i_l},\ \tau_2=\frac{L_i+R-A_{\delta}}{c^i_r} \hbox{ and } T=\frac{L_i-2R-L}{c^i_l}.$$
Furthermore, by taking $\delta$ sufficiently small and taking $L_i$, $R$, $L_i-2R$ sufficiently large, one has
\be\label{a-w-2}
\omega(t,x)\ge 1-3\delta_1 \hbox{ for all $t\ge 0$ and $x\in\overline{\mathcal{H}_i}$ such that $L_i\le x\cdot e_i\le L_i+c^i_r t$},
\ee
for any constant $0<\delta_1\le \sigma/3$.
\end{lemma}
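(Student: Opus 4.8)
The plan is to sandwich $w$ from below by a glued pair of the two entire solutions already at hand: $u_i$, the solution emanating from $u^i_l$ in $\mathcal H_i$, which should govern the edge of the initial $1-\delta$ slab that propagates back toward the junction ball $B(0,L)$, and $\widetilde u_i$, which by \eqref{ltb-wui} equals a $u^i_r$-type almost-planar front inside $\mathcal H_i$ and should govern the edge that propagates outward along $\mathcal H_i$. Concretely I would look for a subsolution of the familiar form
\[
\underline w(t,x)=\max\Big(u_i(\underline\zeta_1(t),x)+\widetilde u_i(\underline\zeta_2(t),x)-1-\delta e^{-\delta t},\;0\Big),\qquad \underline\zeta_j(t)=t+\tau_j-\omega+\omega e^{-\delta t},
\]
so that $\underline\zeta_j(t)\ge t+\tau_j-\omega$ and $\underline\zeta_j'(t)=1-\delta\omega e^{-\delta t}\le 1$ for $t\ge0$. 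The constant $\omega$ is pinned by $k\omega\ge \delta+M$, where $k$ is the common time-derivative lower bound of Lemma~\ref{lemma4.1} (for $u_i$) and of Corollary~\ref{cor-m} (for $\widetilde u_i$) and $M=\sup_{\R^N\times[0,1]}|f_u|$; then $R_0,L_0$ are chosen large (relative to $\omega$, $A_\delta$, $L$, $T_1$, $T_2$) so that for every admissible $R\ge R_0$, $L_i\ge L_0+2R$ the time windows match up: $\underline\zeta_1(t)\le T_1$ on the relevant range of $t$ so that \eqref{Ad} controls $u_i(\underline\zeta_1(t),\cdot)$, $\underline\zeta_2(t)\ge T_2$ so that \eqref{eq-wu-Ad} controls $\widetilde u_i(\underline\zeta_2(t),\cdot)$, and the prescribed $\tau_1,\tau_2$ put the interface slabs of the two profiles where they belong relative to $\{L_i-R\le x\cdot e_i\le L_i+R\}$.

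I would then verify the three hypotheses of the comparison principle on $\{0\le t\le T\}\times\overline\Omega$. First, the ordering at $t=0$: on the slab $\{L_i-R\le x\cdot e_i\le L_i+R\}$ one has $u_i+\widetilde u_i\le 2$, hence $\underline w(0,x)\le 1-\delta=w_0(x)$; on $\overline{\Omega\setminus\mathcal H_i(L_i-R)}$, \eqref{Ad} gives $u_i(\underline\zeta_1(0),x)\le\delta$ (this is precisely where $\omega$ must be large, so that the $\le\delta$-zone of $u_i$ already covers $\{x\cdot e_i<L_i-R\}$), whence $\underline w(0,x)\le\delta+1-1-\delta=0=w_0(x)$; on $\overline{\mathcal H_i(L_i+R)}$, \eqref{eq-wu-Ad} gives $\widetilde u_i(\underline\zeta_2(0),x)\le\delta$, so again $\underline w(0,x)\le0=w_0(x)$. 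Second, the oblique boundary condition: wherever $\underline w>0$ it coincides with $u_i(\underline\zeta_1)+\widetilde u_i(\underline\zeta_2)-1-\delta e^{-\delta t}$, and since $u_i,\widetilde u_i$ both satisfy the homogeneous Neumann condition and $\delta e^{-\delta t}$ is $x$-independent, $\nu A(x)\nabla\underline w\le0$ on $\partial\Omega$; where $\underline w\equiv0$ the zero function is itself a subsolution. Third, the differential inequality: where $\underline w>0$, using the equations for $u_i$ and $\widetilde u_i$,
\[
N(\underline w):=\underline w_t-\mathrm{div}(A\nabla\underline w)+q\cdot\nabla\underline w-f(x,\underline w)=(\underline\zeta_1'-1)(u_i)_t+(\underline\zeta_2'-1)(\widetilde u_i)_t+\delta^2e^{-\delta t}+f(x,u_i)+f(x,\widetilde u_i)-f(x,\underline w).
\]
One then partitions $\overline\Omega$ according to whether each of $u_i(\underline\zeta_1)$, $\widetilde u_i(\underline\zeta_2)$ lies in $[0,\delta]$, in $[1-\delta,1]$, or in between; the restriction $t\le T$ keeps $-c^i_l\underline\zeta_1(t)\ge L$, i.e.\ the $u_i$-interface outside $B(0,L)$, so the almost-planar descriptions stay in force. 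In the regime where, say, $\widetilde u_i(\underline\zeta_2)\ge1-\delta$, monotonicity of $f$ on $[1-\sigma,1]$ together with the exponential smallness of $1-\widetilde u_i$ and of $(\widetilde u_i)_t$ there (Remark~\ref{remark-uir} and parabolic estimates) reduce $f(x,u_i)+f(x,\widetilde u_i)-f(x,\underline w)$ to an $O(M\delta e^{-\delta t})$ error, and when additionally $u_i(\underline\zeta_1)$ is in transition one has $(u_i)_t\ge k$, so $(\underline\zeta_1'-1)(u_i)_t\le-\delta\omega ke^{-\delta t}$ absorbs $\delta^2e^{-\delta t}$ plus that error by $\omega k\ge\delta+M$; the symmetric regime is identical, and the regime where both profiles are $\le\delta$ uses $3\delta\le\sigma$ and $f$ decreasing on $[0,\sigma]$. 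This is exactly the computation of Lemmas~\ref{lemma2.1}, \ref{lemma-uI} and~\ref{lemma3.1}, now run with two profiles glued together.

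The comparison principle then yields $w(t,x)\ge\underline w(t,x)\ge u_i(t+\tau_1-\omega,x)+\widetilde u_i(t+\tau_2-\omega,x)-1-\delta e^{-\delta t}$ on $\{0\le t\le T\}$ — the last step by $t$-monotonicity of $u_i,\widetilde u_i$ and $\omega e^{-\delta t}\ge0$ — which is \eqref{a-w}. For \eqref{a-w-2} I would reset $\delta$ and all parameters so small, and $R$, $L_i$, $L_i-2R$ so large, that every $x\in\overline{\mathcal H_i}$ with $L_i\le x\cdot e_i\le L_i+c^i_r t$ and every $0\le t\le T$ lies well inside the $\ge1-\delta_1$ region of both $u_i(t+\tau_1-\omega,\cdot)$ (by \eqref{Ad}) and $\widetilde u_i(t+\tau_2-\omega,\cdot)$ (by \eqref{eq-wu-Ad}); then \eqref{a-w} forces $w(t,x)\ge1-2\delta_1-\delta_1e^{-\delta_1 t}\ge1-3\delta_1$ there, and for $t>T$ one restarts the comparison from the bound $w(T,\cdot)\ge1-3\delta_1$ on the slab and invokes the same spreading mechanism as in Lemmas~\ref{lemma-uI} and~\ref{lemma-wu-e} to keep $w\ge1-3\delta_1$ behind the moving level $\{x\cdot e_i=L_i+c^i_r t\}$.

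The main obstacle I expect is precisely the interplay between the first and third checks in the \emph{interface zones} — the regions where one profile is mid-transition while the other sits near $1$. One must simultaneously keep the profile arguments lagging (so that $N(\underline w)\le0$ in the transition, which needs $\underline\zeta_j'\le1$) and keep the $\le\delta$-zones already swept past the slab edges at $t=0$ (so that $\underline w(0,\cdot)\le w_0$); reconciling these is exactly what the prescribed shifts $\tau_1,\tau_2$, a large $\omega$, and then large $R_0,L_0$ are for, and it is also why \eqref{Ad} and \eqref{eq-wu-Ad}, being one-sided-in-time descriptions of $u_i$ and $\widetilde u_i$, force \eqref{a-w} to be stated only on $0\le t\le T$ and make \eqref{a-w-2} need the separate restart argument past $t=T$.
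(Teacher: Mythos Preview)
Your argument for \eqref{a-w} is essentially the paper's: the same two-profile subsolution $\max(u_i(\underline\zeta_1)+\widetilde u_i(\underline\zeta_2)-1-\delta e^{-\delta t},0)$, the same initial ordering via \eqref{Ad} and \eqref{eq-wu-Ad}, and the same case analysis for $N\le0$. Two refinements are worth noting. First, in the interface zone of one profile the other must be not merely $\ge1-\delta$ but exponentially close to $1$ \emph{in $t$}; the paper secures this via Corollary~\ref{cor4.7} (for $u_i$) and Lemma~\ref{lemma-wu-e} (for $\widetilde u_i$), which convert the spatial decay of Remark~\ref{remark-uir} into bounds $1-\widetilde u_i\le3\delta e^{-\mu t}$ on the half $\{x\cdot e_i\le L_i+\tfrac{c^i_r}{2}t\}$ and $1-u_i\le2\delta e^{-\delta t}$ on the other half. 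This is why the nonlinear error fits into the $e^{-\delta t}$ budget; it is also why the paper takes $k\omega\ge\delta+4M$ rather than $\delta+M$. Second, the regime ``both profiles $\le\delta$'' is vacuous, since then $\underline w<0$.

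Your treatment of \eqref{a-w-2} past $t=T$ has a gap. Restarting from ``$w(T,\cdot)\ge1-3\delta_1$ on the slab'' does not permit a direct re-run of step~2, which needed the sharper input $w\ge1-\delta$ on $[L_i-R,L_i+R]$. The paper instead takes $L_i-2R$ large enough that \eqref{a-w} at $t=T$ gives $w(T,\cdot)\ge1-\delta$ back on the \emph{original} slab; this allows an exact iteration of step~2 on $[T,2T]$, $[2T,3T]$, \ldots, yielding $w(t,x)\ge1-3\delta$ at the fixed cross-section $x\cdot e_i=L_i$ for \emph{all} $t\ge0$. With that time-global boundary datum, the paper then runs a single-profile Lemma~\ref{lemma3.1}--type subsolution $\max\big(u^i_r(\cdot)-\delta_1 e^{-\delta_1 t}-\widetilde\delta_1 e^{-\lambda_i(x\cdot e_i-L_i)}\psi_i,\,0\big)$ in $\overline{\mathcal H_i(L_i)}$ to obtain the bound behind the moving level $\{x\cdot e_i=L_i+c^i_r t\}$. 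Your references to Lemmas~\ref{lemma-uI} and~\ref{lemma-wu-e} do not substitute for this two-stage (iterate, then single-front) mechanism.
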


\begin{proof}
{\it Step 1: some parameters.} Let $T_1<0$ such that \eqref{Ad} and Corollary~\ref{cor4.7} hold. Remember that $T_2>0$, $\mu>0$ and $D>0$ are constants such that \eqref{eq-wu-Ad} and Lemma~\ref{lemma-wu-e} hold. By Corollary~\ref{cor-m}, there is $k>0$ such that $(u_i)_{\xi}(\xi,x)\ge k$ for $\xi\le T_1$ and $x\in\overline{\mathcal{H}_i}$ such that $-c^i_l \xi-A_{\delta}\le x\cdot e_i\le -c^i_l\xi+A_{\delta}$, and $(\widetilde{u}_i)_{\xi}(\xi,x)\ge k$ for $\xi\ge T_2$ and $x\in\overline{\mathcal{H}_i}$ such that $c^i_r \xi-A_{\delta}\le x\cdot e_i\le c^i_r \xi+A_{\delta}$. Let $\omega>0$ such that
$$k\omega\ge \delta+4M.$$
Let
$$R_0= \max_{i\in\{1,\cdots,m\}}(A_{\delta}+A+c^i_l \omega, A_{\delta}+D+c^i_r \omega,L,c^i_l T_1-L-A_{\delta})$$
 and
$$
L_0=\max_{i\in \{1,\cdots,m\}} (A_{\delta}-T_1 c^i_l,c^i_rT_2+A_{\delta},L).$$ For any $R\ge R_0$ and $L_i\ge L_0+2R$, let $\tau_1$ and $\tau_2$ be defined as
$$\tau_1:=\frac{R-L_i-A_{\delta}}{c^i_l}-\omega \hbox{ and } \tau_2:=\frac{L_i+R-A_{\delta}}{c^i_r}-\omega.$$
Notice that $\tau_1+\omega\le T_1$ and $\tau_2\ge T_2$. Let $T$ be
$$T=\frac{L_i-2R-L}{c^i_l}.$$

{\it Step 2: proof of \eqref{a-w}.} For any $0\le t\le T$ and $x\in \overline{\Omega}$, we set
$$\underline{u}(t,x)=\max\Big(u_i(\underline{\zeta}_1(t),x)+\widetilde{u}_i(\underline{\zeta}_2(t),x)-1-\delta e^{-\delta t},0\Big),$$
where
$$\underline{\zeta}_1(t)=t+\tau_1+\omega e^{-\delta t} \hbox{ and } \underline{\zeta}_2(t)=t+\tau_2+\omega e^{-\delta t}.$$
We prove that $\underline{u}(t,x)$ is a subsolution of the problem satisfied by $w(t,x)$ for $0\le t\le T$ and $x\in\overline{\Omega}$.

At the time $t=0$, one has $\underline{\zeta}_1(0)=\tau_1+\omega\le T_1$, $\underline{\zeta}_2(0)=\tau_2+\omega\ge T_2$ and
$$\underline{u}(0,x)\le \max\Big(1+1-1-\delta,0\Big)=1-\delta\le w_0(x), \hbox{ for $x\in\overline{\mathcal{H}_i}$ such that $L_i-R\le x\cdot e_i\le L_i+R$}.$$
By the definition of $\tau_1$, one has that $L_i-R=-c^i_l(\tau_1+\omega)-A_{\delta}$ and hence $x\in\overline{\Omega\setminus \mathcal{H}_i(L_i-R)}$ implies $x\in\overline{\Omega\setminus\mathcal{H}_i(-c^i_l (\tau_1+\omega)-A_{\delta})}$. It follows from $\tau_1+\omega\le T_1$ and \eqref{Ad} that $u_i(\tau_1+\omega,x)\le \delta$ for $x\in\overline{\Omega\setminus\mathcal{H}_i(L_i-R)}$. Thus,
$$\underline{u}(0,x)\le \max\Big(\delta+1-1-\delta,0\Big)=0\le w_0(x), \hbox{ for $x\in\overline{\Omega\setminus\mathcal{H}_i(L_i-R)}$}.$$
By the definition of $\tau_2$, one has that $L_i+R=c^i_r(\tau_2+\omega)+A_{\delta}$ and hence $x\cdot e_i\ge L_i+R$ implies $x\cdot e_i\ge c^i_r(\tau_2+\omega)+A_{\delta}$. It follows from $\tau_2\ge T_2$ and \eqref{eq-wu-Ad} that $\widetilde{u}_i(\tau_2+\omega,x)\le \delta$ for $x\in\overline{\mathcal{H}_i(L_i+R)}$. Thus,
$$\underline{u}(0,x)\le \max\Big(\delta+1-1-\delta,0\Big)=0\le w_0(x), \hbox{ for $x\in\overline{\mathcal{H}_i(L_i+R)}$}.$$
Therefore, $\underline{u}(0,x)\le w_0(x)$ for all $x\in \overline{\Omega}$. It is obvious that $\nu A(x)\nabla\underline{u}(t,x)=0$ for all $0\le t\le T$ and $x\in \partial\Omega$.

Let us now check that
$$N(t,x)=\underline{u}_t-\hbox{div}(A(x)\nabla\underline{u})+q(x)\cdot \nabla\underline{u}-f(x,\underline{u})\le 0,$$
for $0\le t\le T$ and $x\in\overline{\Omega}$ such that $\underline{u}(t,x)>0$. After some calculation, one has
\begin{align*}
N(t,x)=&-\omega \delta e^{-\delta t} \Big((u_i)_t(\underline{\zeta}_1(t),x)+(\widetilde{u}_i)_t(\underline{\zeta}_2(t),x)\Big) +\delta^2 e^{-\delta t}+f(x,u_i(\underline{\zeta}_1(t),x))\\
&+f(x,\widetilde{u}_i(\underline{\zeta}_2(t),x))-f(x,\underline{u}(t,x)).
\end{align*}
We first deal with the part $x\in \overline{\Omega\setminus \mathcal{H}_i(L_i+\frac{c^i_r}{2} t)}$ for $0\le t\le T$. Notice that $\underline{\zeta}_2(t)\ge \tau_2+\omega e^{-\delta t}\ge T_2$ for all $0\le t\le T$ and
$$L_i+\frac{c^i_r}{2} t\le c^i_r (\tau_2+\omega e^{-\delta t}) +\frac{c^i_r}{2}(\underline{\zeta}_2(t)-(\tau_2+\omega e^{-\delta t}))-D \hbox{ for all $t\ge 0$}.$$
By Lemma~\ref{lemma-wu-e}, it implies that
$$\widetilde{u}_i(\underline{\zeta}_2(t),x)\ge 1-3\delta \hbox{ and } 1-\widetilde{u}_i(\underline{\zeta}_2(t),x)\le 3\delta e^{-\mu t} \hbox{ for all $0\le t\le T$ and $x\in \overline{\Omega\setminus\mathcal{H}_i(L_i+\frac{c^i_r}{2}t)}$}.$$
Also notice that $\underline{\zeta}_1(t)\le T+\tau_1+\omega\le (-R-L-A_{\delta})/c^i_l\le T_1$ for all $0\le t\le T$. Then, for $0\le t\le T$ and $x\in\overline{\Omega\setminus\mathcal{H}_i(-c^i_l \underline{\zeta}_1(t)-A_{\delta})}$, it follows from \eqref{Ad} that $u_i(\underline{\zeta}_1(t),x)\le \delta$ and hence $\underline{u}(t,x)\le \delta$. Thus, by \eqref{delta-reset} and \eqref{eq-F},
$$f(x,u_i(\underline{\zeta}_1(t),x))-f(x,\underline{u}(t,x))\le -\gamma (1-\widetilde{u}_i(\underline{\zeta}_2(t),x)+\delta e^{-\delta t})\le -4\gamma \delta e^{-\delta t},$$
and $f(x,\widetilde{u}_i)\le 0$.
It then follows from $(u_i)_t>0$, $(\widetilde{u}_i)_t>0$ and \eqref{delta-reset} that
\begin{align*}
N(t,x)\le &-\omega \delta e^{-\delta t} \Big((u_i)_t(\underline{\zeta}_1(t),x)+(\widetilde{u}_i)_t(\underline{\zeta}_2(t),x)\Big) +\delta^2 e^{-\delta t}-4\gamma \delta e^{-\delta t}\le 0.
\end{align*}
For $0\le t\le T$ and $x\in\overline{\mathcal{H}_i}$ such that $-c^i_l \underline{\zeta}_1(t)-A_{\delta}\le x\cdot e_i\le -c^i_l \underline{\zeta}_1(t)+A_{\delta}$, one has that $(u_i)_t(\underline{\zeta}_1(t),x)\ge k$. It is obvious that 
$$f(x,u_i(\underline{\zeta}_1(t),x))-f(x,\underline{u}(t,x))\le -\gamma (1-\widetilde{u}_i(\underline{\zeta}_2(t),x)+\delta e^{-\delta t})\le 4M\delta e^{-\delta t}.$$
Thus, 
\begin{align*}
N(t,x)\le &-\omega k \delta e^{-\delta t} +\delta^2 e^{-\delta t} +4M\delta e^{-\delta t}\le 0.
\end{align*}
For $0\le t\le T$ and $x\in\overline{\mathcal{H}_i}$ such that $-c^i_l\underline{\zeta}_1(t)+A_{\delta}\le x\cdot e_i\le L_i+\frac{c^i_r}{2}t$, it follows from \eqref{Ad} that $u_i(\underline{\zeta}_1(t),x)\ge 1-\delta$ and hence $\underline{u}(t,x)\ge 1-3\delta$. Thus, by \eqref{delta-reset} and \eqref{eq-F},
$$f(x,u_i(\underline{\zeta}_1(t),x))-f(x,\underline{u}(t,x))\le -\gamma (1-\widetilde{u}_i(\underline{\zeta}_2(t),x)+\delta e^{-\delta t})\le -4\gamma\delta e^{-\delta t}.$$
It then follows from $(u_i)_t>0$, $(\widetilde{u}_i)_t>0$ and \eqref{delta-reset} that
\begin{align*}
N(t,x)\le &-\omega \delta e^{-\delta t} \Big((u_i)_t(\underline{\zeta}_1(t),x)+(\widetilde{u}_i)_t(\underline{\zeta}_2(t),x)\Big) +\delta^2 e^{-\delta t}-4\gamma \delta e^{-\delta t}\le 0.
\end{align*}

We then deal with the part $x\in \overline{\mathcal{H}_i(L_i+\frac{c^i_r}{2} t)}$ for $0\le t\le T$. Notice that $\underline{\zeta}_1(t)\le T_1$ for all $0\le t\le T$ and
$$L_i+\frac{c^i_r}{2} t\ge -c^i_l \underline{\zeta}_1(t)+A \hbox{ for all $t\ge 0$}.$$
By Corollary~\ref{cor4.7} and \eqref{delta-reset}, it implies that
\begin{align*}
u_i(\underline{\zeta}_1(t),x)&\ge 1-\delta e^{-r_i(x\cdot e_i+c^i_l \underline{\zeta}_1(t)-A)}-\delta e^{-\lambda_i(x\cdot e_i-L)}\\
&\ge 1-2\delta e^{-\delta t}\hskip 2cm \hbox{ for all $0\le t\le T$ and $x\in \overline{\mathcal{H}_i(L_i+\frac{c^i_r}{2}t)}$}.
\end{align*}
Then, for $0\le t\le T$ and $x\in\overline{\mathcal{H}_i(L_i+\frac{c^i_r}{2}t)}$ such that $x\cdot e_i\le c^i_r \underline{\zeta}_2(t)-A_{\delta}$, it follows from \eqref{eq-wu-Ad} that $\widetilde{u}_i(\underline{\zeta}_2(t),x)\ge 1-\delta$ and hence $\underline{u}(t,x)\le 1-3\delta$. Thus, by \eqref{delta-reset} and \eqref{eq-F},
$$f(x,\widetilde{u}_i(\underline{\zeta}_2(t),x))-f(x,\underline{u}(t,x))\le -\gamma (1-u_i(\underline{\zeta}_1(t),x)+\delta e^{-\delta t})\le -3\gamma \delta e^{-\delta t},$$
and $f(x,u_i(\underline{\zeta}_1(t),x))\le 0$.
It then follows from $(u_i)_t>0$, $(\widetilde{u}_i)_t>0$ and \eqref{delta-reset} that
\begin{align*}
N(t,x)\le &-\omega \delta e^{-\delta t} \Big((u_i)_t(\underline{\zeta}_1(t),x)+(\widetilde{u}_i)_t(\underline{\zeta}_2(t),x)\Big) +\delta^2 e^{-\delta t}-3\gamma \delta e^{-\delta t}\le 0.
\end{align*}
For $0\le t\le T$ and $x\in\overline{\mathcal{H}_i}$ such that $c^i_r \underline{\zeta}_2(t)-A_{\delta}\le x\cdot e_i\le c^i_r \underline{\zeta}_2(t)+A_{\delta}$, one has that $(\widetilde{u}_i)_t(\underline{\zeta}_2(t),x)\ge k$. It is obvious that 
$$f(x,\widetilde{u}_i(\underline{\zeta}_2(t),x))-f(x,\underline{u}(t,x))\le M (1-u_i(\underline{\zeta}_1(t),x)+\delta e^{-\delta t})\le 3M\delta e^{-\delta t}.$$
Thus,
\begin{align*}
N(t,x)\le &-\omega k \delta e^{-\delta t} +\delta^2 e^{-\delta t} +3M\delta e^{-\delta t}\le 0.
\end{align*}
For $0\le t\le T$ and $x\in\overline{\mathcal{H}_i(c^i_r \underline{\zeta}_2(t)+A_{\delta})}$, it follows from \eqref{eq-wu-Ad} that $\widetilde{u}_i(\underline{\zeta}_2(t),x)\le \delta$ and hence $\underline{u}(t,x)\le \delta$. Thus, by \eqref{delta-reset} and \eqref{eq-F},
$$f(x,\widetilde{u}_i(\underline{\zeta}_2(t),x))-f(x,\underline{u}(t,x))\le -\gamma (1-u_i(\underline{\zeta}_1(t),x)+\delta e^{-\delta t})\le -3\gamma\delta e^{-\delta t}.$$
It then follows that
\begin{align*}
N(t,x)\le &-\omega \delta e^{-\delta t} \Big((u_i)_t(\underline{\zeta}_1(t),x)+(\widetilde{u}_i)_t(\underline{\zeta}_2(t),x)\Big) +\delta^2 e^{-\delta t}-3\gamma \delta e^{-\delta t}\le 0.
\end{align*}

Consequently, by the comparison principle and $(u_i)_t>0$, $(\widetilde{u}_i)_t>0$, one gets that
$$w(t,x)\ge \underline{u}(t,x)\ge u_i(t+\tau_1,x)+\widetilde{u}_i(t+\tau_2,x)-1-\delta e^{-\delta t},$$
for $0\le t\le T$ and $x\in\overline{\Omega}$.
This completes the proof of \eqref{a-w}.

{\it Step 3: proof of \eqref{a-w-2}.} Let $A_{\delta/3}$ be the constant such that \eqref{Ad} and \eqref{eq-wu-Ad} hold for $\delta$ replaced by $\delta/3$. By taking $R$ sufficiently large, one can make that for $0\le t\le T$ and $x\in\overline{\mathcal{H}_i}$ such that $x\cdot e_i=L_i$,
$$x\cdot e_i+c^i_l(t+\tau_1-\omega)\ge L_i+R-L_i+A_{\delta}-c^i_l\omega=R+A_{\delta}-c^i_l\omega\ge A_{\delta},$$
and
$$c^i_r(t+\tau_2-\omega)-x\cdot e_i\ge L_i+R-A_{\delta}-c^i_r\omega-L_i=R-A_{\delta}-c^i_r\omega\ge A_{\delta}.$$
Then, by \eqref{Ad}, \eqref{eq-wu-Ad} and \eqref{a-w}, one has that
$$w(t,x)\ge 1-3\delta \hbox{ for all $0\le t\le T$ and $x\in\overline{\mathcal{H}_i}$ such that $x\cdot e_i=L_i$}.$$

On the other hand, by taking $L_i-2R$ sufficiently large, one can make that $\delta e^{-\delta T}\le \delta/3$ and for $x\in\overline{\mathcal{H}_i}$ such that $L_i-R\le x\cdot e_i\le L_i+R$,
$$x\cdot e_i+c^i_l(T+\tau_1-\omega)\ge L_i-2R-L+A_{\delta}-c^i_l\omega\ge A_{\delta/3},$$
and
$$c^i_r(T+\tau_2-\omega)-x\cdot e_i\ge \frac{c^i_r}{c^i_l}(L_i-2R-L)-A_{\delta}-c^i_r\omega\ge A_{\delta/3}.$$
Then, one has that
$$\omega(T,x)\ge 1-\delta, \hbox{ for $x\in\overline{\mathcal{H}_i}$ such that $L_i-R\le x\cdot e_i\le L_i+R$}.$$

One can do the same arguments as in Step~2 to get that
$$w(T+t,x)\ge \underline{u}(t,x)\ge u_i(t+\tau_1,x)+\widetilde{u}_i(t+\tau_2,x)-1-\delta e^{-\delta t},$$
for $0\le t\le T$ and $x\in\overline{\Omega}$. Then, $\omega(T+t,x)\ge 1-3\delta$ for all $0\le t\le T$ and $x\in\overline{\mathcal{H}_i}$ such that $x\cdot e_i=L_i$
By iteration, one has that
\be\label{wge1-3d}
w(t,x)\ge 1-3\delta \hbox{ for all $t\ge 0$ and $x\in\overline{\mathcal{H}_i}$ such that $x\cdot e_i=L_i$}.
\ee

Now,  let $\lambda_i>0$ and $\psi_i(x)>0$ be the constant and function satisfying Lemma~\ref{lemma-psi} for the extension $\widetilde{\mathcal{H}_i}$ of $\mathcal{H}_i$ and $\beta=\gamma$ where $\gamma$ is defined by \eqref{eq-F}. We take any constant $\delta_1$ such that
$$0<\delta_1\le \min_{i\in\{1,\cdots,m\}} (\lambda_i c^i_r,\gamma,\sigma/3),$$
(remember that $c^i_r>0$ for all $i\in\{1,\cdots,m\}$ in this section). Define $\widetilde{\delta}_1:=\delta_1/\|\psi_i\|_{L^{\infty}(\overline{\mathcal{H}_i(L)})}$ and $\delta'_1:=\widetilde{\delta}\inf_{\overline{\mathcal{H}_i(L)}}\psi_i(x)$. Since $\delta$ satisfying \eqref{delta-reset} can be arbitrarily taken, we take $\delta$ sufficiently small such that $3\delta<\delta'_1\le \delta_1$. Then, by \eqref{wge1-3d}, one has
$$w(t,x)\ge 1-\delta'_1 \hbox{ for all $t\ge 0$ and $x\in\overline{\mathcal{H}_i}$ such that $x\cdot e_i=L$}.$$
Moreover,
$$w(0,x)\ge 1-\delta\ge 1-\delta_1 \hbox{ for $x\in\overline{\mathcal{H}_i}$ such that $L_i-R\le x\cdot e_i\le L_i+R$}.$$
Then, by the proof of Lemma~\ref{lemma3.1} and since $R$ is sufficiently large, one can easily check that the function
$$\underline{u}_1(t,x)=\max\Big(u^i_r(t+\omega e^{-\delta_1 t}-\omega+\tau_1,x)-\delta_1 e^{-\delta_1 t}-\widetilde{\delta}_1 e^{-\lambda_i(x\cdot e_i-L_i)} \psi_i(x),0\Big),$$
for $t\ge 0$ and $x\in\overline{\mathcal{H}_i(L)}$, where $\omega$ is defined by \eqref{eq-omega1} for $\delta_1$, $\tau_1=\frac{L_i+R-M_{\delta_1}}{c^i_r}$ and $M_{\delta_1}$ is defined by \eqref{Md-s3}, is a subsolution of the problem satisfied by $w(t,x)$. Thus, by the comparison principle, it follows that
$$w(t,x)\ge \underline{u}_1(t,x), \hbox{ for $t\ge 0$ and $x\in\overline{\mathcal{H}_i(L_i)}$}.$$
Then, it is elementary to check that $w(t,x)$ satisfies \eqref{a-w-2}.
\end{proof}

\subsection{Proof of Theorem~\ref{th3}}

Now, we are ready to prove the uniqueness of the transition front connecting $0$ and $1$. We consider any transition front $u$ connecting $0$ and $1$ for \eqref{eq1.1} associated with some sets $(\Omega^{\pm}_{t})_{t\in \R}$ and $(\Gamma_t)_{t\in \R}$. We first derive that the interfaces $\Gamma_t$ are located far away from the origin at very negative time.

\begin{lemma}\label{lemma4.3}
For every $\rho\ge 0$, there exists a real number $T$  such that
$$\Omega\cap B(0,L+\rho)\subset \Omega_t^-\ \ \text{for all $t\le T_1$}.$$
\end{lemma}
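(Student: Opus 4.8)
The plan is to argue by contradiction, converting a failure of the statement into pointwise information near the origin and then invoking invasion. Suppose the conclusion fails for some $\rho\ge0$; choosing successively larger thresholds $T=-n$, we obtain times $t_n\to-\infty$ for which $\Omega\cap B(0,L+\rho)$ is not contained in $\Omega_{t_n}^-$, hence meets $\Gamma_{t_n}\cup\Omega_{t_n}^+$. The first step is to show that this forces $u$ to be close to $1$ on a large ball sitting near the origin at each time $t_n$: for every $\varepsilon>0$ and $R>0$ there are a constant $\kappa=\kappa(\varepsilon,R)>0$ and, for each $n$, a point $y_n\in\overline{\Omega_{t_n}^+}$ with $d_\Omega(0,y_n)\le\kappa$ such that $u(t_n,\cdot)\ge1-\varepsilon$ on the geodesic ball $\overline{B}_\Omega(y_n,R)$. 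Indeed, if $x\in\Omega\cap B(0,L+\rho)$ lies in $\Gamma_{t_n}\cup\Omega_{t_n}^+$, then either $x$ itself is already far from $\Gamma_{t_n}$, or there is a point of $\Gamma_{t_n}$ at bounded geodesic distance from the origin, to which we apply the non-degeneracy property \eqref{eq1.4}--\eqref{eq1.5} with the level $M$ taken so large that $d_\Omega(\cdot,\Gamma_{t_n})\ge M_\varepsilon+R$ throughout the resulting ball; then \eqref{eq1.7} gives $u(t_n,\cdot)\ge1-\varepsilon$ there. Here one uses that the branches have uniformly bounded width, so geodesic and Euclidean distances are comparable on bounded sets and $d_\Omega(0,y_n)$ stays bounded.

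The second step is to feed this into an invasion mechanism based on Lemma~\ref{lemma-s2} (and the complete-propagation hypothesis that is in force throughout this section). Taking $R$ large and $\varepsilon$ small, the ball $\overline{B}_\Omega(y_n,R)$ contains, for every $i\in\{1,\dots,m\}$, a portion of the branch $\mathcal{H}_i$ of the form $\{x\in\mathcal{H}_i:\ L_i-R'\le x\cdot e_i\le L_i+R'\}$ meeting the requirements of Lemma~\ref{lemma-s2} ($L_i\ge L_0$, $R'\ge R_0$, $L_i-2R'$ large), while on that portion $u(t_n,\cdot)\ge1-\varepsilon\ge w_0$; the comparison principle and Lemma~\ref{lemma-s2} then yield $u(t_n+t,x)\ge1-3\delta_1$ on the expanding sets $\{x\in\mathcal{H}_i:\ L_i\le x\cdot e_i\le L_i+c^i_r t\}$ for each $i$, with $\delta_1>0$ arbitrarily small. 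A parallel bump comparison near the origin, together with the fact that $u$ close to $1$ at the mouths of all branches propagates back to force $u$ close to $1$ on the bounded set $\Omega\cap B(0,L)$, upgrades this to $u(t_n+t,\cdot)\ge1-3\delta_1$ on any fixed bounded neighbourhood of the origin once $t$ exceeds a threshold independent of $n$.

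The third step concludes. Fix any reference time $t^*\in\R$ and any $x_*\in\overline\Omega$. For $n$ large we have $t_n<t^*$ with $t^*-t_n\to+\infty$, so by the previous step $u(t^*,x_*)\ge1-3\delta_1$; since $\delta_1>0$ is arbitrary this gives $u(t^*,x_*)\ge1$, contradicting $u<1$. Hence there is $T$ with $\Omega\cap B(0,L+\rho)\subset\Omega_t^-$ for all $t\le T$.

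The main obstacle is making the invasion in the second step uniform in $n$: the base points $y_n$ move, so one must exploit that they remain in a fixed bounded region — up to extraction $y_n\to y_*$, and one can compare with a single fixed configuration, using Assumption~\ref{assumption-limiting} if a limiting-equation argument is needed — and one must carefully check that the many radii and shifts entering Lemma~\ref{lemma-s2} can be met simultaneously while keeping $y_n$ within a bounded distance of the origin, and that the branch-by-branch lower bounds transfer back to a neighbourhood of the origin. The first and third steps are routine given the transition-front definition and Lemma~\ref{lemma-s2}.
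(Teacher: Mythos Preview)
Your overall strategy---contradiction, locate a bump of height $1-\varepsilon$ at bounded distance from the origin, invoke Lemma~\ref{lemma-s2}, and force $u\equiv1$ somewhere---is exactly what the paper (via \cite[Lemma~4.5]{GHS}) does. Step~1 is fine. But Step~2 contains two real errors.

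First, the claim that $B_\Omega(y_n,R)$ contains a long segment in \emph{every} branch is not justified: the bound $d_\Omega(0,y_n)\le\kappa(\varepsilon,R)$ comes from~\eqref{eq1.5} and may be much larger than $R$, so $y_n$ can sit deep inside a single branch $\mathcal{H}_{i_*}$ and the ball meets only that branch. You can only place one segment, in $\mathcal{H}_{i_*}$.

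Second, and more seriously, conclusion~\eqref{a-w-2} of Lemma~\ref{lemma-s2} gives propagation \emph{into} the branch (direction $e_{i_*}$), yielding $u(t,x)\ge1-3\delta_1$ on $\{L_*\le x\cdot e_{i_*}\le L_*+c^{i_*}_r t\}$; it says nothing about the region near the origin. Your ``propagates back'' sentence is the missing step, and it does not follow from what you have written, nor from~\eqref{a-w} (there the effective time $T+\tau_1$ for $u_{i_*}$ is the fixed negative number $(-R-L+A_\delta)/c^{i_*}_l$, so $u_{i_*}$ never reaches the centre). Hence Step~3 cannot conclude $u(t^*,x_*)\ge1-3\delta_1$ at an arbitrary $x_*$.

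The repair is to add an extra comparison. From~\eqref{a-w-2} and $t_n\to-\infty$ you do get $u(t,x)\ge1-3\delta_1$ for \emph{all} $t\in\R$ and all $x\in\overline{\mathcal{H}_{i_*}(L_*)}$, with $L_*$ fixed. Now pick $s_0$ so negative that $u_{i_*}(s_0,\cdot)\le\delta$ outside $\mathcal{H}_{i_*}(L_*)$; then $u(t_0,\cdot)\ge u_{i_*}(s_0,\cdot)-\delta$ for every $t_0$, and the subsolution of the proof of Lemma~\ref{lemma-uI} gives $u(t_0+t,x)\ge u_{i_*}(s_0+t-\omega,x)-\delta e^{-\delta t}$ for all $t\ge0$. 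Since $u_{i_*}$ propagates completely, letting $t\to+\infty$ and using that $t_0$ is arbitrary yields $u\equiv1$, the desired contradiction. This is the step you flagged as ``the main obstacle'' but did not carry out.
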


\begin{proof}
Thanks to Lemma~\ref{lemma-s2}, it can be proved similarly by the arguments of the proof of \cite[Lemma~4.5]{GHS}. Actually, in the arguments of the proof of \cite[Lemma~4.5]{GHS}, the key is to apply Lemma~4.1 of \cite{GHS} whose conclusions are similar as those in our Lemma~\ref{lemma-s2}.
\end{proof}
\vskip 0.3cm

By Definition~\ref{TF}, one can assume without loss of generality, even if it means redefining $\Omega^\pm_t$ and $\Gamma_t$, that, for every $t\in\R$ and $i\in\{1,\cdots,m\}$, there is an non-negative integer $n_{i,t}\in\{0,\cdots,n\}$ and some real numbers $L<\xi_{i,t,1}<\cdots<\xi_{i,t,n_{i,t}}$ (if $n_{i,t}\ge1$) such that
\be\label{interfaces}
\Gamma_t\cap\mathcal{H}_i=\bigcup_{k=1}^{n_{i,t}}\big\{x\in\mathcal{H}_i: x\cdot e_i=\xi_{i,t,k}\big\},
\ee
where $n$ is as in~\eqref{eq1.6} and with the convention $\Gamma_t\cap\mathcal{H}_i=\emptyset$ if $n_{i,t}=0$. By \eqref{eq1.3}, every $\Omega_t^+$ contains a half-infinite branch. Then, by continuity of $u(t,x)$, there is a set $I\subset\{1,\cdots,m\}$ such that $n_{i,t}\neq 0$ for all $t\le T$ and $i\in I$. Notice that Lemma~\ref{lemma4.3} also implies that $\xi_{i,t,1}\rightarrow +\infty$ as $t\rightarrow -\infty$ for every $i\in I$.

\begin{lemma}\label{lemma-xige}
For every $i\in I$, there holds
$$\liminf_{t\rightarrow -\infty} \Big(\xi_{i,t,1}+c^i_l t \Big)>-\infty.$$
\end{lemma}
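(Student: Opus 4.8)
The plan is to argue by contradiction, using Lemma~\ref{lemma-s2} to show that a ``$1$''--slab created just beyond the interface $\{x\cdot e_i=\xi_{i,t,1}\}$ forces $u$ to stay close to $1$ on a region whose left edge travels inward at speed $c^i_l$, and then contradicting Lemma~\ref{lemma4.3}. So, suppose $\liminf_{t\to-\infty}(\xi_{i,t,1}+c^i_lt)=-\infty$ and pick $t_n\to-\infty$ with $g_n:=\xi_{i,t_n,1}+c^i_lt_n\to-\infty$; recall from Lemma~\ref{lemma4.3} that $\xi_{i,t_n,1}\to+\infty$. Fix $\delta$ as in~\eqref{delta-reset} and let $R_0,L_0,\omega$ be the constants of Lemma~\ref{lemma-s2}.

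First I would build the slab at time $t_n$. Applying~\eqref{eq1.4}--\eqref{eq1.5} at the interface point $x_n^\ast\in\Gamma_{t_n}\cap\mathcal H_i$ with $x_n^\ast\cdot e_i=\xi_{i,t_n,1}$, for a large but $n$-independent $M$ (chosen so that the geodesic ball of a fixed radius around the resulting point $y_n^+\in\Omega_{t_n}^+$ lies at geodesic distance $\ge M_\delta$ from $\Gamma_{t_n}$), one gets $y_n^+\in\Omega_{t_n}^+$ at bounded distance from $x_n^\ast$ around which $u(t_n,\cdot)\ge1-\delta$; since branches are smooth with bounded width, this yields $u(t_n,x)\ge1-\delta$ for $x\in\overline{\mathcal H_i}$ with $x\cdot e_i\in[L_i^n-R_0,\,L_i^n+R_0]$, where $L_i^n=\xi_{i,t_n,1}+O(1)$ with the $O(1)$ bounded uniformly in $n$. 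For $n$ large, $L_i^n\ge L_0+2R_0$ and $L_i^n-R_0\ge L$, so Lemma~\ref{lemma-s2} applies with this $L_i^n$ and $R=R_0$. Letting $w$ solve~\eqref{eq1.1} with the corresponding step initial datum, the bound $u(t_n,\cdot)\ge w(0,\cdot)$, the comparison principle and~\eqref{a-w} give, for $0\le t\le T_n$ with $T_n=(L_i^n-2R_0-L)/c^i_l\to+\infty$,
\[
u(t_n+t,x)\ \ge\ u_i(t+\tau_1^n-\omega,x)+\widetilde u_i(t+\tau_2^n-\omega,x)-1-\delta e^{-\delta t},
\]
where $\tau_1^n,\tau_2^n$ are the shifts of Lemma~\ref{lemma-s2}.

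Next I would evaluate this at $t=T_n$, i.e.\ at $s_n:=t_n+T_n$. A direct computation gives $s_n=(g_n+O(1))/c^i_l$, so $s_n\to-\infty$; moreover the time argument $T_n+\tau_1^n-\omega$ of $u_i$ equals a fixed constant $\le T_1$ (the threshold of Corollary~\ref{cor4.7}), whose associated interface of $u^i_l$ sits at a fixed position, while the time argument $T_n+\tau_2^n-\omega$ of $\widetilde u_i$ tends to $+\infty$. Hence, choosing a fixed $L^\ast>L$ depending only on $R_0,L,A_\delta,\omega,A$, Corollary~\ref{cor4.7} gives $u_i(T_n+\tau_1^n-\omega,x)\ge1-2\delta$ and~\eqref{eq-wu-Ad} gives $\widetilde u_i(T_n+\tau_2^n-\omega,x)\ge1-\delta$ at the point $x\in\mathcal H_i$ with $x\cdot e_i=L^\ast$, for $n$ large, so that
\[
u(s_n,x)\ \ge\ (1-2\delta)+(1-\delta)-1-\delta\ =\ 1-4\delta\ >\ \tfrac12
\]
(using $\delta\le\sigma/4<1/8$). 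Since $L^\ast$ is fixed and branches have bounded width, $x\in B(0,L+\rho)$ for a fixed $\rho$; applying Lemma~\ref{lemma4.3} with this $\rho$, for $n$ large we have $s_n\le T$ and $x\in\Omega_{s_n}^-$ with $d_\Omega(x,\Gamma_{s_n})\ge(L+\rho)-|x|\ge M_{1/4}$ once $\rho$ is chosen large enough, hence $u(s_n,x)\le\tfrac14$ by Definition~\ref{TF}. This contradicts $u(s_n,x)>\tfrac12$, and the lemma follows.

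The step I expect to be the main obstacle is the uniform bookkeeping of constants in the last two paragraphs: ensuring the offset in $L_i^n$, the position $L^\ast$ of the $u_i$--interface at $t=T_n$, and the radius $\rho$ are all independent of $n$, and in particular that $s_n\to-\infty$ happens \emph{simultaneously} with the left edge of the subsolution reaching the fixed ball $B(0,L+\rho)$. This coupling is precisely where the hypothesis $g_n\to-\infty$ enters, through the identity $s_n=(g_n+O(1))/c^i_l$; making the two limits cooperate, together with checking that the time argument of $u_i$ stays below $T_1$ so that Corollary~\ref{cor4.7} is available, is the technical heart of the argument.
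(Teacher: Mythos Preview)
Your proposal is correct and takes essentially the same approach as the paper: contradiction, build a $(1-\delta)$-slab just beyond $\xi_{i,t_n,1}$ via the transition-front axioms, apply Lemma~\ref{lemma-s2}, evaluate at $t=T_n$, and contradict Lemma~\ref{lemma4.3}. The paper closes slightly differently---it uses the bare limit $u_i(\text{fixed time},x)\to1$ as $x\cdot e_i\to+\infty$ rather than Corollary~\ref{cor4.7} (thus sidestepping the $\le T_1$ check you flag; if you keep your route, just enlarge $R\ge R_0$ so that $(A_\delta-R-L)/c^i_l-\omega\le T_1$), and phrases the contradiction as ``$\xi_{i,t_k+T,1}$ stays bounded'' rather than your direct pointwise comparison, but these are cosmetic variations of the same argument.
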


\begin{proof}
Assume by contradiction that there exist $i\in I$ and a sequence $\{t_k\}_{k\in \mathbb{N}}$ such that $t_k\rightarrow -\infty$ as $k\rightarrow +\infty$ and $\xi_{i,t_k,1}+c^i_l t_k\rightarrow -\infty$. Since $\xi_{i,t_k,1}\rightarrow +\infty$ as $k\rightarrow +\infty$, it follows from Definition~\ref{TF}, \eqref{eq1.5} and \eqref{disO} that for any $R>0$ and $\delta$ defined by \eqref{delta-reset}, there is $D_{\delta}>0$ such that
$$\{x\in\mathcal{H}_i; \xi_{i,t_k,1}+D_{\delta}-R\le x\cdot e_i\le \xi_{i,t_k,1}+D_{\delta}+R\}\subset \Omega^+_{t_k},\ \xi_{i,t_k,1}+D_{\delta}-R\ge L \hbox{ for large $k$},$$
and
$$u(t_k,x)\ge 1-\delta, \hbox{ for all $x\in \overline{\mathcal{H}_i}$ such that $\xi_{i,t_k,1}+D_{\delta}-R\le x\cdot e_i\le \xi_{i,t_k,1}+D_{\delta}+R$ and large $k$}.$$
Then, by Lemma~\ref{lemma-s2}, one has that
\be\label{uge}
u(t,x)\ge u_i(t-t_k+\tau_1-\omega,x)+\widetilde{u}_i(t-t_k+\tau_2+\omega,x)-1-\delta e^{-\delta (t-t_k)}, \hbox{ for $0\le t-t_k\le T$ and $x\in\overline{\Omega}$},
\ee
where
$$\tau_1=\frac{R-\xi_{i,t_k,1}+D_{\delta}+A_{\delta}}{c^i_l},\ \tau_2=\frac{\xi_{i,t_k,1}+D_{\delta}+R-A_{\delta}}{c^i_r} \hbox{ and } T=\frac{\xi_{i,t_k,1}+D_{\delta}-2R-L}{c^i_l}.$$
Notice that $t_k+T\rightarrow -\infty$ and $\tau_2\rightarrow +\infty$ as $k\rightarrow +\infty$ since $\xi_{i,t_k,1}\rightarrow +\infty$, $\xi_{i,t_k,1}+c^i_l t_k\rightarrow -\infty$ as $k\rightarrow +\infty$. Moreover, one has that
$$u(t_k+T,x)\ge u_i(\frac{2D_{\delta}+A_{\delta}-R-L}{c^i_l}-\omega,x)+\widetilde{u}_i(T+\tau_2-\omega,x)-1-\delta, \hbox{ for $x\in\overline{\Omega}$}.$$
Since $\widetilde{u}_i(t,x)\rightarrow 1$ as $t\rightarrow +\infty$ locally uniformly for $x\in\overline{\Omega}$ and $u_i(t,x)\rightarrow 1$ as $x\cdot e_i\rightarrow +\infty$ for any $t\in \R$ and $x\in\overline{\mathcal{H}_i}$, there are $L_1$ and $L_2$ such that
$$u(t_k+T,x)\ge 1-3\delta, \hbox{ for $x\in\overline{\mathcal{H}_i}$ such that $L_1\le x\cdot e_i\le L_2$ and large $k$}.$$
Since $\delta$ can be taken arbitrarily small, it implies that 
$$\{x\in \mathcal{H}_i; L_1\le x\cdot e_i\le L_2\}\subset \Omega^+_{t_k+T}.$$
Together with Lemma~\ref{lemma4.3}, one has that $\xi_{i,t_k+T,1}<+\infty$.
This contradicts $\xi_{i,t,1}\rightarrow +\infty$ as $t\rightarrow -\infty$.
\end{proof}

\begin{lemma}\label{lemma-xile}
For every $i\in I$, there holds
$$\limsup_{t\rightarrow -\infty} \Big(\xi_{i,t,1}+c^i_l t\Big)<+\infty.$$
\end{lemma}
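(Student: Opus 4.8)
The plan is to argue by contradiction, mirroring the structure of the proof of Lemma~\ref{lemma-xige} but now exploiting the blocking-type bound from the \emph{supersolution} side rather than the subsolution side. Suppose that for some $i\in I$ there is a sequence $t_k\to-\infty$ with $\xi_{i,t_k,1}+c^i_lt_k\to+\infty$. Since $\xi_{i,t_k,1}\to+\infty$ (by Lemma~\ref{lemma4.3}), one can feed the transition-front information into Lemma~\ref{lemma-s1}: for the fixed $\delta$ of \eqref{delta-reset}, Definition~\ref{TF} together with \eqref{eq1.5} and \eqref{disO} gives a $D_\delta>0$ such that $u(t_k,x)\le\delta$ for $x\in\overline{\Omega\setminus\bigcup_{i\in I}\mathcal H_i(\xi_{i,t_k,1}-D_\delta)}$ and $u(t_k,x)\le 1$ elsewhere, i.e. $u(t_k,\cdot)$ lies below the initial datum $v_0$ of Lemma~\ref{lemma-s1} with $L_i$ replaced by $\xi_{i,t_k,1}-D_\delta$. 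Applying the comparison principle and Lemma~\ref{lemma-s1} then yields, for all $t\ge t_k$ and $x\in\overline{\Omega}$,
\[
u(t,x)\le u_{I,\tau^k_i}(t-t_k+\tau^k+\omega,x)+\delta e^{-\delta(t-t_k)},
\]
where $\tau^k$ and the shifts $\tau^k_i$ are the quantities prescribed by Lemma~\ref{lemma-s1}, so that $\tau^k+\tau^k_i=(A_\delta-\xi_{i,t_k,1}+D_\delta)/c^i_l$. The key point is the sign of these shifts: because $\xi_{i,t_k,1}+c^i_lt_k\to+\infty$, one checks that $t_k+\tau^k\to-\infty$ while the front $u_{I,\tau^k_i}$ is still, at time $t_k+\tau^k$, essentially $0$ throughout the compact region $\Omega\cap B(0,L+\rho)$ for any fixed $\rho$; indeed $u_{I,\tau^k_i}(t_k+\tau^k+\omega,x)\to0$ there as $k\to+\infty$ by \eqref{eq-uI}.

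Next I would evaluate the resulting bound at a suitable time. Pick a fixed $\rho>0$ and a fixed time $s$; for $k$ large, $t_k+\tau^k+\omega\le s$ fails in the wrong direction, so instead one evaluates at $t=t_k$ itself, or slightly after: since $u_{I,\tau^k_i}(t_k-t_k+\tau^k+\omega,x)=u_{I,\tau^k_i}(\tau^k+\omega,x)$ and $\tau^k\to-\infty$, the right-hand side is at most $\delta+o(1)\le 2\delta$ on $\Omega\cap B(0,L+\rho)$ for $k$ large. Hence $u(t_k,x)\le 2\delta$ on $\Omega\cap B(0,L+\rho)$, and since $\delta$ may be taken arbitrarily small in \eqref{delta-reset}, this forces $\Omega\cap B(0,L+\rho)\subset\Omega^-_{t_k}$ for $k$ large — which is consistent with Lemma~\ref{lemma4.3} and gives no contradiction directly. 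The contradiction must instead come from the \emph{other} branches or from the propagation being too slow: the bound $u\le u_{I,\tau^k_i}+\delta e^{-\delta(t-t_k)}$ holds for \emph{all} $t\ge t_k$, and by Lemma~\ref{lemma-uI} the comparison supersolution $u_{I,\tau^k_i}$ has interface position in branch $\mathcal H_i$ at time $t$ equal to $-c^i_l(t-t_k+\tau^k+\tau^k_i)+O(1)=-c^i_l(t+\tau^k)+O(1)$, i.e. located at $x\cdot e_i\approx -c^i_l t - c^i_l\tau^k$. Since $t_k+\tau^k\to-\infty$ faster than $t_k$, this says $u(t,\cdot)$ in $\mathcal H_i$ is still below $\delta$ at spatial positions $x\cdot e_i$ up to roughly $-c^i_l t - c^i_l\tau^k$, hence $\xi_{i,t,1}\gtrsim -c^i_l t - c^i_l\tau^k$ for all $t$ in a window after $t_k$; specializing appropriately and using Lemma~\ref{lemma-xige} (which bounds $\xi_{i,t,1}+c^i_lt$ from below, hence $\xi_{i,t,1}\le -c^i_lt+C$) produces the contradiction $-c^i_lt-c^i_l\tau^k\le -c^i_lt+C$, i.e. $-\tau^k\le C/c^i_l$, contradicting $\tau^k\to+\infty$.

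The main obstacle I expect is bookkeeping the shifts correctly and making the window of validity $0\le t-t_k\le T_k$ (the horizon in Lemma~\ref{lemma-s1} is infinite, which helps) long enough and positioned correctly so that the comparison bound can be read off at a time where $\xi_{i,\cdot,1}+c^i_l\cdot$ is known to be bounded above by Lemma~\ref{lemma-xige}; one has to be careful that $\tau^k$ and $\tau^k_i$ are defined with the right signs (both $\le0$ where needed) so that Lemma~\ref{lemma-s1}'s hypotheses $L_i\ge L+A_\delta+R$ are met for $k$ large — this is automatic since $\xi_{i,t_k,1}\to+\infty$. A secondary subtlety is that Lemma~\ref{lemma-s1} compares with $u_{I,\tau_i}$ for the \emph{same} set $I$; one should take $I$ to be the set of branches where $n_{i,t}\neq0$ for all very negative $t$ (as already fixed before Lemma~\ref{lemma-xige}), and verify the datum inequality $u(t_k,\cdot)\le v_0$ branch by branch using \eqref{interfaces} — the regions $x\cdot e_i<\xi_{i,t_k,1}-D_\delta$ sit in $\Omega^-_{t_k}$, giving $u\le\delta$ there, while the rest is bounded by $1$. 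Modulo these checks, the argument is a routine dual of Lemma~\ref{lemma-xige}. Thus $\limsup_{t\to-\infty}(\xi_{i,t,1}+c^i_lt)<+\infty$, as claimed.
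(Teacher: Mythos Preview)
Your setup is right and matches the paper's: apply Lemma~\ref{lemma-s1} with $L_i=\xi_{i,t_k,1}-D_\delta$ to get $u(t,x)\le u_{I,\tau_i^k}(t-t_k+\tau^k+\omega,x)+\delta e^{-\delta(t-t_k)}$ for $t\ge t_k$. But the contradiction step contains a genuine error. You invoke Lemma~\ref{lemma-xige} and write ``bounds $\xi_{i,t,1}+c^i_lt$ from below, hence $\xi_{i,t,1}\le -c^i_lt+C$'' --- this reverses the inequality; Lemma~\ref{lemma-xige} gives $\xi_{i,t,1}\ge -c^i_lt-C$, another \emph{lower} bound, so combining it with your derived lower bound $\xi_{i,t,1}\gtrsim -c^i_lt-c^i_l\tau^k$ yields no contradiction at all. (The final line ``contradicting $\tau^k\to+\infty$'' is also wrong; $\tau^k\to-\infty$.) There is a second slip in your interface formula: the supersolution's interface in branch $i$ sits at $-c^i_l(t-t_k+\omega)+(\xi_{i,t_k,1}-A_\delta-D_\delta)$, governed by $\tau^k+\tau^k_i$, not by $\tau^k$ alone; writing it as $-c^i_l(t+\tau^k)+O(1)$ is incorrect when the maximum defining $\tau^k$ is achieved at a branch other than $i$.

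The fix is simpler than what you attempted and does not use Lemma~\ref{lemma-xige} at all. For the offending branch $i_0$ one has $-c^{i_0}_l(t-t_k+\tau^k+\omega+\tau^k_{i_0})=(\xi_{i_0,t_k,1}+c^{i_0}_lt_k)-c^{i_0}_lt-c^{i_0}_l\omega-A_\delta-D_\delta\to+\infty$ for any \emph{fixed} $t$. Since $\tau^k-t_k$ is bounded above (this is where Lemma~\ref{lemma-xige} enters, applied to the branch realizing the maximum), one can fix $t\le T$ with $t-t_k+\tau^k+\omega\le T_1$ for all large $k$; then \eqref{Ad} gives $u_{I,\tau_i^k}(t-t_k+\tau^k+\omega,x)\le\delta$ for every $x\in\overline{\mathcal{H}_{i_0}(L)}$ once $k$ is large, and the exponential term vanishes. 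Hence $u(t,x)\le\delta$ throughout $\overline{\mathcal{H}_{i_0}(L)}$, which directly contradicts $n_{i_0,t}\neq0$ (equivalently, the finiteness of $\xi_{i_0,t,1}$). This is the paper's route.
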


\begin{proof}
Take any sequence $\{t_k\}_{k\in \mathbb{N}}$ such that $t_k\rightarrow -\infty$ as $k\rightarrow +\infty$.
By Definition~\ref{TF}, there is $D_{\delta}>0$, \eqref{eq1.5}, \eqref{disO} and Lemma~\ref{lemma4.3} such that
$$u(t_k,x)\le \delta \hbox{ for $x\in \overline{\Omega\setminus\cup_{i\in I}\mathcal{H}_i(\xi_{i,t_k,1}-D_{\delta})}$ and large $k$},$$
where $\delta$ is defined by \eqref{delta-reset}.
By Lemma~\ref{lemma-s1}, one has
\be\label{ule}
u(t,x)\le u_{I,\tau_i}(t-t_k+\tau+\omega,x)+\delta e^{-\delta(t-t_k)} \hbox{ for $t\ge t_k$ and $x\in\overline{\Omega}$},
\ee
where
$$\tau=\max_{i\in I}\Big(\frac{A_{\delta}-\xi_{i,t_k,1}+D_{\delta}}{c^i_l}\Big) \hbox{ and } \tau+\tau_i=\frac{A_{\delta}-\xi_{i,t_k,1}+D_{\delta}}{c^i_l}.$$
Notice that $\tau\rightarrow -\infty$ as $k\rightarrow +\infty$ since $\xi_{i,t,1}\rightarrow +\infty$ as $t\rightarrow -\infty$ for every $i\in I$ and $\tau-t_k<+\infty$ by Lemma~\ref{lemma-xige}.

Assume by contradiction that there exist $i_0\in I$ and a sequence $\{t_{k}\}_{k\in \mathbb{N}}$ such that $t_k\rightarrow -\infty$ as $k\rightarrow +\infty$ and $\xi_{i_0,t_k,1}+c^{i_0}_l t_k\rightarrow +\infty$.
Then, one has that
\be\label{xile}
-c^{i_0}_l(t-t_k+\tau+\omega+\tau_{i_0})\rightarrow +\infty \hbox{ for any fixed $t$ as $k\rightarrow +\infty$}.
\ee
Since $\tau-t_k<+\infty$, one can pick any $t\le T$ such that $t-t_k+\tau+\omega\le T_1$ where $T_1$ is defined in \eqref{Ad}. By passing $k\rightarrow +\infty$, it follows from \eqref{Ad}, \eqref{ule} and \eqref{xile} that
$$u(t,x)\le \delta, \hbox{ for $x\in\overline{\mathcal{H}_{i_o}(L)}$}.$$
which contradicts $i_0\in I$ and $n_{i_0,t}\neq 0$ for $t\le T$.
\end{proof}

\begin{lemma}\label{trap}
For every $i\in I$, there are constants $\sigma_i$, $\tau_i$ and $\eta$ such that
$$u_i(t+\sigma_i,x)\le u(t,x)\le u_{I,\tau_i}(t+\eta,x) \hbox{ for all $t\in \R$ and $x\in \overline{\Omega}$}.$$
\end{lemma}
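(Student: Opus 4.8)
The plan is to exploit the sharp localization of the interface $\Gamma_t$ of the given transition front $u$ inside the branches $\mathcal{H}_i$, $i\in I$, for very negative times, which is precisely the content of Lemmas~\ref{lemma-xige} and~\ref{lemma-xile}: they provide constants $M_1,M_2>0$ with $-c^i_lt-M_1\le\xi_{i,t,1}\le-c^i_lt+M_2$ for all $t$ below some threshold, so that in each such branch $u$ coincides, up to a bounded time-shift and an $O(\delta)$ error, with the almost-planar front $u^i_l$. From this I will derive, at one fixed very negative time $t_0$, a two-sided coarse comparison $u_i(t_0+\sigma_i,\cdot)\le u(t_0,\cdot)\le u_{I,\tau_i}(t_0+\eta,\cdot)$ up to an additive error $C\delta$ (with $\sigma_i$ chosen very negative and $\eta+\tau_j$ chosen large positive for each $j\in I$, using respectively the upper and lower localization bounds together with the transition-front property~\eqref{eq1.5}), and then upgrade it to the exact, all-time inequality and let $t_0\to-\infty$.

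For the lower bound, $u_i$ is the entire solution emanating from $u^i_l$ in $\mathcal{H}_i$ alone, it propagates completely, and it satisfies the uniform monotonicity $(u_i)_t\ge k$ on $\{\,\eta\le u_i\le 1-\eta\,\}$ by Lemma~\ref{lemma4.1}. Choosing $\sigma_i$ so negative that the set where $u^i_l(t_0+\sigma_i,\cdot)$ has not yet dropped below $\delta'$ lies entirely beyond the level $x\cdot e_i=-c^i_lt_0+M_2+D_\delta$ (beyond which $u(t_0,\cdot)\ge1-\delta$, by Lemma~\ref{lemma-xile} and~\eqref{eq1.5}), and using $u_i\to u^i_l$ in $\overline{\mathcal{H}_i}\cap\overline\Omega$ and $u_i\to0$ off $\mathcal{H}_i$, one gets $u_i(t_0+\sigma_i,x)\le u(t_0,x)+C\delta$ on $\overline\Omega$. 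Then, with $\varepsilon\asymp C\delta$ and $\omega$ such that $\omega k\ge\delta+M$, the function $\underline u(t,x)=u_i\big(t_0+t+\sigma_i-\omega\varepsilon(1-e^{-\delta t}),x\big)-\varepsilon e^{-\delta t}$ is a subsolution of the problem solved by $u(t_0+\cdot,\cdot)$ lying below it at $t=0$, exactly as in the uniqueness argument of Section~2.2 and in the proof of Corollary~\ref{cor1}; the comparison principle and $t_0\to-\infty$ give $u_i(t+\sigma_i-\omega\varepsilon,x)\le u(t,x)$ for all $t\in\R$, and one renames $\sigma_i-\omega\varepsilon$ as $\sigma_i$. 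The upper bound is the mirror image: $u_{I,\tau_i}$ emanates from $u^j_l(\cdot+\tau_j,\cdot)$ in every $\mathcal{H}_j$, $j\in I$, it propagates completely (Lemma~\ref{lemma-uI}) and satisfies $(u_{I,\tau_i})_t\ge k$ on $\{\,\eta\le u_{I,\tau_i}\le 1-\eta\,\}$ (Corollary~\ref{cor-m}); choosing $\eta+\tau_j\ge(M_\delta+D_\delta+M_1)/c^j_l$ for every $j\in I$ makes $u_{I,\tau_i}(t_0+\eta,\cdot)\ge1-\delta$ wherever $u(t_0,\cdot)>\delta$ inside $\cup_{j\in I}\mathcal{H}_j$, while Lemma~\ref{lemma4.3} (together with the fact that the branches $\mathcal{H}_j$, $j\notin I$, lie in $\Omega_t^-$ along a sequence $t\to-\infty$) takes care of the rest, yielding $u(t_0,x)\le u_{I,\tau_i}(t_0+\eta,x)+C\delta$; a shifted supersolution and $t_0\to-\infty$ then finish. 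Alternatively, these one-time comparisons can be read directly off Lemma~\ref{lemma-s1} (for the upper bound, whose conclusion already has the shape $v\le u_{I,\tau_i}(\cdot+\text{shift})+\delta e^{-\delta t}$) and Lemma~\ref{lemma-s2} (for the lower bound), after inserting the initial datum $u(t_0,\cdot)$ squeezed between the step functions used there.

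I expect the genuine obstacle to be the geometric bookkeeping behind these coarse one-time comparisons: making rigorous that, for $i\in I$ and very negative $t$, the leading interface $\xi_{i,t,1}$ governs the whole relevant picture in $\mathcal{H}_i$ (no stray, far-behind interface pieces spoiling the estimate) and that $u$ is uniformly small at very negative times off $\cup_{i\in I}\mathcal{H}_i$ — i.e. that the front genuinely originates only from the branches of $I$. This rests on the transition-front axioms~\eqref{eq1.4}--\eqref{eq1.6} (in particular that any interface point far out in a branch must have points deep in $\Omega_t^\pm$ within a bounded geodesic distance, which constrains how $\Gamma_t$ can twist or split inside a branch of bounded width), on Lemma~\ref{lemma4.3}, and on the standing hypothesis of this section that every $u_i$ propagates completely — which, as recalled at the start of the section, forces $c^i_l>0$ and $c^i_r>0$ for all $i$ and is what makes the almost-planar fronts $u^i_l$ genuine invasions. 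A secondary, purely technical point is to check that all the constants ($D_\delta$, $M_\delta$, $M_{\delta'}$, $M_1$, $M_2$, $k$, $\omega$) can be fixed uniformly in $t_0$ and in the shifts, so that the exponential-shift device indeed closes the estimates; this is guaranteed by Assumptions~\ref{assumption-r}--\ref{assumption-limiting} and by the uniform monotonicity bounds of Lemma~\ref{lemma4.1} and Corollary~\ref{cor-m}.
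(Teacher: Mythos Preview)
Your alternative route---reading the comparisons off Lemmas~\ref{lemma-s1} and~\ref{lemma-s2}---is exactly what the paper does, and it is the path that works cleanly. The paper simply reuses the inequalities~\eqref{uge} and~\eqref{ule} that were already derived (inside the proofs of Lemmas~\ref{lemma-xige} and~\ref{lemma-xile}) and sends $t_k\to-\infty$: by Lemmas~\ref{lemma-xige}--\ref{lemma-xile} the shifts $\tau_1-t_k$, $\tau-t_k$ and $\tau_i$ stay bounded, while $\tau_2-t_k\to+\infty$ (so $\widetilde u_i\to1$ pointwise, since it propagates completely) and $\delta e^{-\delta(t-t_k)}\to0$. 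This yields the pointwise bounds $u_i(t+\sigma_i,\cdot)\le u(t,\cdot)\le u_{I,\tau_i}(t+\eta,\cdot)$ for all $t\le t_0$, and comparison of solutions extends them to all~$t$.

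Your primary approach, however, has a genuine gap in the lower bound. You assert that $u(t_0,\cdot)\ge1-\delta$ for all $x\cdot e_i\ge-c^i_lt_0+M_2+D_\delta$, citing Lemma~\ref{lemma-xile} and~\eqref{eq1.5}; but those only yield $u(t_0,\cdot)\ge1-\delta$ on a \emph{bounded strip} just beyond $\xi_{i,t_0,1}$. Nothing you invoke rules out further interface pieces $\xi_{i,t_0,2}<\cdots$ in~$\mathcal H_i$, beyond which $u(t_0,\cdot)$ may well drop below~$\delta$, so the one-time inequality $u_i(t_0+\sigma_i,\cdot)\le u(t_0,\cdot)+C\delta$ need not hold on the far tail of~$\mathcal H_i$. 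You correctly flag this as ``the genuine obstacle'', but the remedies you list (axioms~\eqref{eq1.4}--\eqref{eq1.6}, Lemma~\ref{lemma4.3}, complete propagation of the~$u_i$) do not close it. The paper's mechanism sidesteps the issue entirely: Lemma~\ref{lemma-s2} requires $u\ge1-\delta$ only on a strip of width~$2R$ at time~$t_k$, and the auxiliary front~$\widetilde u_i$ carried in~\eqref{uge} controls the far tail of~$\mathcal H_i$ at all intermediate times; it is eliminated only in the limit $t_k\to-\infty$. Your upper-bound argument, by contrast, is fine: the region behind $\xi_{i,t_0,1}$ is interface-free by definition, and together with Lemma~\ref{lemma4.3} this gives $u(t_0,\cdot)\le\delta$ on all of $\overline{\Omega\setminus\cup_{i\in I}\mathcal H_i(\xi_{i,t_0,1}-D_\delta)}$, which is what Lemma~\ref{lemma-s1} needs.
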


\begin{proof}
By Lemma~\ref{lemma-xige} and Lemma~\ref{lemma-xile}, one has that $|\xi_{i,t,1}+c^i_l t|\le +\infty$ for all $t\le T$ and $i\in I$. Take any sequence $\{t_k\}_{k\in\mathbb{N}}$ such that $t_k\rightarrow -\infty$ as $k\rightarrow +\infty$. Consider \eqref{uge} and notice that $|t_k+T|<+\infty$ and $|\tau_1-t_k|<+\infty$. By passing to the limit $k\rightarrow +\infty$, there is $\sigma_i\in \R$ such that
$$u(t,x)\ge u_i(t+\sigma_i,x), \hbox{ for all $t\le t_0$ with some $t_0\in \R$ and $x\in\overline{\Omega}$}.$$
Since $u_i(t+\sigma_i,x)$ is a solution of \eqref{eq1.1}, then it follows from the comparison principle that $u(t,x)\ge u_i(t+\sigma_i,x)$ for all $t\in \R$ and $x\in \overline{\Omega}$.

Consider \eqref{ule} and notice that $|\tau-t_k|<+\infty$ and $|\tau_i|<+\infty$ for all $i\in I$. Then, by passing the limit $k\rightarrow +\infty $ in \eqref{ule}, there is $\eta\in \R$ such that
$$u(t,x)\le u_{I,\tau_i}(t+\eta,x) \hbox{ for all $t\in \R$ and $x\in\overline{\Omega}$}.$$
This completes the proof.
\end{proof}
\vskip 0.3cm

\begin{proof}[Proof of Theorem~\ref{th3}]
By \eqref{front}, \eqref{eq-uI} and Lemma~\ref{trap}, one can get that $u(t,x)$ is trapped by shifts of $u^i_l(t,x)$ with some small perturbations for every $i\in I$ as $t\rightarrow -\infty$. Consider any sequence $\{t_n\}_{n\in\mathbb{N}}$ such that $t_n\rightarrow -\infty$ as $n\rightarrow +\infty$. By applying similar arguments as in the proof of Theorem~\ref{th2}, one can get that for any $\varepsilon>0$, there is $L_{\varepsilon}$ such that
$$|u(t_n,x)-u^i_l(t_n+m_i,x)|\le \varepsilon  \hbox{ for some $m_i$, large $n$ and $x\in\overline{\mathcal{H}_i(L_{\varepsilon})}$, every $i\in I$}.$$
Since $u_{I,\tau_i}(t,x)\rightarrow 0$ as $t\rightarrow -\infty$ locally uniformly for $x\in\overline{\Omega}$, then $u(t,x)\rightarrow 0$ as $t\rightarrow -\infty$ locally uniformly for $x\in\overline{\Omega}$ by Lemma~\ref{trap} and hence one has that
$$|u(t_n,x)-u^i_l(t_n+m_i,x)|\le \varepsilon \hbox{ for some $m_i$, large $n$ and all $x\in\overline{\mathcal{H}_i(L)}$, every $i\in I$}.$$
Let $\hat{\varepsilon}=\varepsilon/\delta$ where $\delta$ is defined as in Lemma~\ref{lemma-s1}. By the proof of Lemma~\ref{lemma-s1}, one can easily check that the functions
$$u^+(t,x)=\min\Big(u_{I,m_i}(t-t_n-\hat{\varepsilon}\omega e^{-\delta(t-t_n)}+\omega,x)+\hat{\varepsilon}\delta e^{-\delta (t-t_n)},1\Big),$$
and
$$u^-(t,x)=\max\Big(u_{I,m_i}(t-t_n+\hat{\varepsilon}\omega e^{-\delta(t-t_n)}-\omega,x)-\hat{\varepsilon}\delta e^{-\delta (t-t_n)},0\Big),$$
are sup- and subsolutions of the problem satisfied by $u(t,x)$ for $t\ge t_n$ and $x\in\overline{\Omega}$. It follows from the comparison principle that
$$u^-(t,x)\le u(t,x)\le u^+(t,x), \hbox{ for $t\ge t_n$ and $x\in\overline{\Omega}$}.$$
Thus, one has
$$|u(t,x)-u_{I,m_i}(t,x)|\le \omega\hat{\varepsilon}\|(u_{I,m_i})_t\|_{L^{\infty}}+2\hat{\varepsilon}\delta \hbox{ for all $t\ge t_n$}.$$
Since $\varepsilon>0$ can be arbitrarily small by taking $t_n$ negative enough, then one gets that $u(t,x)\equiv u_{I,m_i}(t,x)$. Since $u_{I,m_i}(t,x)$ is a transition front emanating from almost-planar fronts $u^i_l(t+m_i,x)$ in branches $\mathcal{H}_i$ for $i\in I$, it completes the proof of Theorem~\ref{th3}.
\end{proof}

\section{Some applications}

In this section, we give two simple examples to which our results can be applied.

{\it Example 1:} Consider the following equation
\begin{eqnarray}\label{ex1}
\left\{\begin{array}{lll}
&u_t-\Delta u=f(u), \quad &t\in\R,\ x\in\Omega,\\
&\partial_{\nu}u=0,& t\in\R,\ x\in\partial\Omega,
\end{array}
\right.
\end{eqnarray}
where $\nu$ denotes the outward unit normal to $\partial\Omega$ and $f(u)$ is a bistable nonlinearity, that is, satisfying \eqref{F1}. In this case, we make the branches of $\Omega$ are straight, that is,
$$\mathcal{H}_i:=\{x\in\R^N; x-(x\cdot e) e\in \omega_i, x\cdot e_i>0\}+x_i,$$
for every $i\in \{1,\cdots,m\}$, where $\omega_i\subset \R^{N-1}$ is a fixed set and $x_i\in \R^N$ is a shift. Then, one can extend every branch $\mathcal{H}_i$ such that the extension $\widetilde{\mathcal{H}}_i$ is still a straight cylinder, that is,
$$\widetilde{\mathcal{H}_i}:=\{x\in\R^N; x-(x\cdot e) e\in \omega_i\}+x_i.$$
By \cite{FM}, one knows that there are planar fronts $\phi_f(x\cdot e_i-c_f t)$ facing to direction $e_i$ and $\phi_f(-x\cdot e_i-c_f t)$ facing to direction $-e_i$ (where $(\phi_f,c_f)$ satisfies \eqref{phi_f}) for \eqref{ex1} with $\Omega$ replaced by $\widetilde{\mathcal{H}}_i$. Therefore, Assumptions~\ref{assumption-r} and~\ref{assumption-l} hold in this case. Since $\widetilde{\mathcal{H}}_i$ is invariant by shifts along with the direction $e_i$, it is obvious that Assumption~\ref{assumption-limiting} also holds. Thus, by Theorem~\ref{Th1}, there exist entire solutions of \eqref{ex1} emanating from planar fronts. If additionally every entire solution emanating from a planar front in every branch $\mathcal{H}_i$ propagates completely, that is, satisfying \eqref{front} (see \cite[Corollaries~1.11, 1.12]{GHS} for some sufficient geometrical conditions), then it follows from Theorem~\ref{th3} that the entire solution emanating from planar fronts is the only type of transition fronts connecting $0$ and $1$.

{\it Example 2:} Consider \eqref{ex1} in the domain $\Omega$ with branches $\mathcal{H}_i$ being asymptotically straight. That is, $\mathcal{H}_i$ is defined by \eqref{defbranch} where $\omega_i(s)\underset{s\rightarrow +\infty}{\rightarrow} \omega_i^{\infty}\subset \R^{N-1}$ and $\omega_i^{\infty}$ is a bounded non-empty set of $\R^{N-1}$. One can extend the branch $\mathcal{H}_i$ by $\widetilde{\mathcal{H}}_i$ satisfying \eqref{wH} where $\widetilde{\omega}_i(s)=\omega_i(s)$ for $s>0$ and $\widetilde{\omega}_i(s)=\omega_i^{\infty}$ for $s<s_0$ and some $s_0<0$. By \cite{P}, one knows that there are front-like solutions facing to directions $e_i$ and $-e_i$ for \eqref{ex1} with $\Omega$ replaced by $\widetilde{\mathcal{H}}_i$, which can also be easily verified to be almost-planar fronts connecting $0$ and $1$. Notice here that one may need to make $\widetilde{\mathcal{H}}_i$ smooth and $\widetilde{\mathcal{H}_i}\cap B(0,L)$ being star-shaped\footnote{A bounded set $K$ is called star-shaped if there is $x$ in the interior $\mathrm{Int}(K)$ of $K$ such that $x+t(y-x)\in\mathrm{Int}(K)$ for all $y\in\partial K$ and $t\in[0,1)$. Then, we say that $K$ is star-shaped with respect to the point $x$.}  such that the propagation of the front-like solutions is complete. Then, Assumptions~\ref{assumption-r} and~\ref{assumption-l} hold in this case. Notice that the limiting system of Assumption~\ref{assumption-limiting} in this case is \eqref{ex1} in a straight cylinder rotated by $\R\times\omega^{\infty}_i$. Thus, Assumption~\ref{assumption-limiting} holds. Therefore, by Theorem~\ref{Th1}, there exist entire solutions emanating from those front-like solutions. If additionally \eqref{front} holds, then the entire solution emanating from those front-like solutions is the only type of transition front connecting $0$ and $1$ by Theorem~\ref{th3}. Some potential geometrical conditions such that \eqref{front} holds are that the center $\Omega\cap B(0,L)$ is star-shaped and branches $\mathcal{H}_i$ are narrowing or slowly opening to be straight.

More examples can be made, by referring to \cite{BN} for \eqref{ex1} with an advection term in a cylinder, referring to \cite{MNL} for \eqref{ex1} in cylinders with periodic boundaries and so on.


\end{document}